

\documentclass[12pt,letterpaper,oneside]{book}

\usepackage[headings]{thesis2e}

\usepackage{amsmath}
\usepackage{amsthm}
\usepackage{amssymb}
\usepackage{tikz}

\usepackage[utf8]{inputenc}	
\usepackage[english]{babel} 	
\usepackage[T1]{fontenc}	
\usepackage{txfonts} 		
\usepackage{textcomp}		

\usepackage{graphicx}		
\usepackage{verbatim}		
\usepackage{rotating}
\usepackage{framed}




\theoremstyle{plain}
\newtheorem{thm}{Theorem}[section]
\newtheorem{lem}[thm]{Lemma}
\newtheorem{prop}[thm]{Proposition}

\theoremstyle{definition}
\newtheorem{defn}{Definition}
\newtheorem{conj}{Conjecture}
\newtheorem{ex}{Example}[chapter]


\newcommand{\cA}{\ensuremath{\mathcal{A}}}

\newcommand{\cB}{\ensuremath{\mathcal{B}}}

\newcommand{\cC}{\ensuremath{\mathcal{C}}}

\newcommand{\CC}{\ensuremath{\mathbb{C}}}
\newcommand{\cD}{\ensuremath{\mathcal{D}}}

\newcommand{\cE}{\ensuremath{\mathcal{E}}}

\newcommand{\cF}{\ensuremath{\mathcal{F}}}

\newcommand{\cH}{\ensuremath{\mathcal{H}}}

\newcommand{\cM}{\ensuremath{\mathcal{M}}}

\newcommand{\NN}{\ensuremath{\mathbb{N}}}

\newcommand{\cP}{\ensuremath{\mathcal{P}}}

\newcommand{\cQ}{\ensuremath{\mathcal{Q}}}

\newcommand{\QQ}{\ensuremath{\mathbb{Q}}}

\newcommand{\RR}{\ensuremath{\mathbb{R}}}

\newcommand{\kS}{\ensuremath{\mathfrak{S}}}

\newcommand{\cT}{\ensuremath{\mathcal{T}}}

\newcommand{\kT}{\ensuremath{\mathfrak{T}}}

\newcommand{\kU}{\ensuremath{\mathfrak{U}}}

\newcommand{\cW}{\ensuremath{\mathcal{W}}}

\newcommand{\cZ}{\ensuremath{\mathcal{Z}}}

\newcommand{\ZZ}{\ensuremath{\mathbb{Z}}}

\DeclareFontEncoding{OT2}{}{} 

\makeatletter
\def\testb#1{\testb@i#1,,\@nil}%
\def\testb@i#1,#2,#3\@nil{%
  \draw[-, thick] (O) --++(#1);
  \ifx\relax#2\relax\else\testb@i#2,#3\@nil\fi}
\makeatother   

\newcommand{\makediag}[1]{
    \coordinate (O) at (0,0); \coordinate (N) at (0,1);
    \coordinate (NE) at (0.8,0.8); \coordinate (E) at (1,0);
    \coordinate (SE) at (0.8,-0.8); \coordinate (S) at (0,-1);
    \coordinate (SW) at (-0.8,-0.8);\coordinate (W) at (-1,0);
    \coordinate (NW) at (-0.8,0.8); \coordinate (B1) at (1.2,1.2);
    \coordinate (B2) at (-1.2,-1.2);
    
    \testb{#1}
} 

\newcommand{\diagr}[1]{
  \begin{tikzpicture}[scale=0.2]\makediag{#1}\end{tikzpicture}
}

\newcommand{\diagb}[1]{
  \begin{tikzpicture}[scale=0.5]\makediag{#1}\end{tikzpicture}
}

\newcommand{\drn}{\begin{tikzpicture}[scale = 2] \draw[->, very thick] (0,0) -- (0,0.2); \draw[->, very thick] (0,0) -- (0,0.2); \draw[->, very thick] (0,0) -- (0,0.2); \end{tikzpicture}}
\newcommand{\drs}{\begin{tikzpicture}[scale = 2] \draw[->, very thick] (0,0) -- (0,-0.2); \draw[->, very thick] (0,0) -- (0,-0.2); \draw[->, very thick] (0,0) -- (0,-0.2); \end{tikzpicture}}

\widowpenalty=300
\clubpenalty=300


\author{Samuel Johnson}
\qualifications{B. Math. (Hons.), University of Newcastle, 2010}
\title{Analytic combinatorics of\\planar lattice paths}
\degree{Master of Science}
\department{Mathematics\\Faculty of Science}
\submitdate{Summer 2012}

\fairdealingcopyright	

\chair{Dr. Matthew DeVos}
\committee{Dr. Marni Mishna\\Senior Supervisor\\Assistant Professor}
\committee{Dr. Karen Yeats\\Committee Member\\Assistant Professor}
\committee{Dr. Petr Lisonek\\Internal/External Examiner\\Associate Professor}

\approvaldate{15 June 2012}

\abstract{Lattice paths effectively model phenomena in chemistry, physics and probability theory. Asymptotic enumeration of lattice paths is linked with entropy in the physical systems being modeled. Lattice paths restricted to different regions of the plane are well suited to a functional equation approach for exact and asymptotic enumeration. This thesis surveys results on lattice paths under various restrictions, with an emphasis on lattice paths in the quarter plane. For these paths, we develop an original systematic combinatorial approach providing direct access to the exponential growth factors of the asymptotic expressions.}


\begin{document}
\frontmatter 
\maketitle 

\chapter[Dedication]{}
\begin{center}
{\Large
\emph{To Donald Anthony, Eric Sidney, TJ and Saxon Eric Stephen}}
\end{center}

\chapter{Acknowledgments}
First, I would like to acknowledge the funding provided throughout my degree. Research assistant funding was provided by the Mathematics of Computer Algebra and Analysis (MOCAA) project of mprime. Simon Fraser University also provided funding in the form of Teach Assistantships and a Graduate Fellowship.

Thanks to the examining committee for taking the time to read this, especially my supervisor Dr. Marni Mishna, without whose invaluable guidance I would never have finished on time with something I'm happy to put my name on.

Many thanks and much love also go to my loving family, who each contributed in their own way, but all held back the blank stare every time I spoke about my work and offered kind words of encouragement when stress began to fray the edges of my days.

And last but certainly not least, much love and thanks to Sweeney, my little yellow bird, for every little piece of support. Especially the Monday morning pots of coffee and putting up with my monotonous lack of any other conversation topic.

\newpage
\addcontentsline{toc}{chapter}{\contentsname}
\tableofcontents

\newpage 
\addcontentsline{toc}{chapter}{\listtablename}
\listoftables

\newpage
\addcontentsline{toc}{chapter}{\listfigurename}
\listoffigures


\mainmatter


\part{Background}\label{bg}

\chapter{Introduction}

In many applications, lattice paths are models of physical and chemical phenomena, with the enumerative results closely linked with physical properties. For statistical mechanical systems modeled using combinatorial structures, growth constants, known as exponential growth factors in the enumerative world, provide information about the entropy in the physical system \cite{va00}. 

The focus of this thesis is a family of planar lattice walks confined to different regions of the plane, with an emphasis on the quarter plane, a region for which explicit closed form enumeration is presently unknown for many members of this family. For many models of quarter plane walks, asymptotic expressions are known, but the present methods of finding them require heavy computation.

\begin{sidewaystable}
  \begin{center}
    \begin{tabular}{|c|r|l|l|}
      \hline
      Class & Functional Equation & Classification of GF & Enumerative Results \\
      \hline
      $\cW_\kS$ & $W_\kS(x,y;t) = 1 + tS(x,y)W_\kS(x,y;t)~~$ & ~~Rational &  \begin{tabular}{l} Exact and asymptotic \\ formulae for coefficients. \end{tabular} \\
      \hline
      $\cF_\kS$ & \begin{tabular}{r} $F_\kS(y;t) = 1 + tP(y)F_\kS(y;t) $ \\ $ - \{y^{<0}\}tP(y)F_\kS(0;t)$ \end{tabular} & ~~Algebraic & \begin{tabular}{l} Coefficient formulae from coefficient \\ extraction of explicit GF, \\ asymptotic results from Theorem \ref{dirasympt} \end{tabular} \\
      \hline
      $\cH_\kS$ & \begin{tabular}{r} $H_\kS(x,y;t) = 1 + tS(x,y)H_\kS(x,y;t) $ \\ $ - \{y^{<0}\}tS(x,y)H_\kS(x,0;t)$ \end{tabular} & ~~Algebraic & ~~Results via bijection with $\cF_\kS$ \\
      \hline
      $\cQ_\kS$ & \begin{tabular}{r} $Q_\kS(x,y;t) = ~~~~~ 1 + tS(x,y)Q_\kS(x,y;t) $ \\ $- t\left( \{x^{<0}\}S(x,y)Q_\kS(0,y;t)\right.$ \\ $\left. + \{y^{<0}\}S(x,y)Q_\kS(x,0;t) \right)$ \\ $ + \omega(\kS)\frac{t}{xy}Q_\kS(0,0;t)$ \end{tabular} & \begin{tabular}{l} 23 D-finite \\ 51 non D-finite \\ 5 conjectured non D-finite \end{tabular} & \begin{tabular}{l} For D-finite cases: \\ enumerative results for 19 \\ cases via orbit sums, some direct \\ counting results, experimental \\ asymptotics via long sum \\ generation, exponential  growth \\ factors from bounding arguments \end{tabular} \\
      \hline
    \end{tabular}
    \caption{A table summarising each class, its functional equation, classification of GFs and summary of enumerative results.}\label{fetabintro}
  \end{center}
\end{sidewaystable}

Motivated by finding a more intuitive strategy for proving the exponential growth factors combinatorially, we study planar lattice paths with small or nearest neighbour steps. These are paths that move from the origin to another point in a region $R \subseteq \ZZ^2$, using a set of steps $\kS$ taken as a subset of $\bar{\kS} = \{ -1,0,1 \}^2 \setminus \{(0,0)\}$, which we also may represent by the corresponding compass directions
\[
 \bar{\kS} = \{\textbf{N,~NE,~E,~SE,~S,~SW,~W,~NW}\}.
\]
The number of steps taken by a walk is the length of the walk, and we count the number of walks of a given length on a step set subject to some restrictions. Figure \ref{walkex1} is an unrestricted lattice path of length 1000 with steps taken from $\{ \textbf{N,~E,~S,~W}\}$, starting at the origin.

\begin{figure}[h]
 \begin{center}
    \begin{tikzpicture}[scale=.15]
      \draw[<->,dashed] (-30,0) -- (30,0);
      \draw[<->,dashed] (0,-30) -- (0,30);
      \pgfmathdeclarerandomlist{steps}{{(0,1)}{(1,0)}{(0,-1)}{(-1,0)}}
      \coordinate (current point) at (0,0);
      \draw[fill=black] (current point) circle (10pt);
      \foreach \t in {1,...,1000} {
	\pgfmathrandomitem{\step}{steps}
	\draw (current point) -- +\step coordinate (current point);
      }
      ;
  \end{tikzpicture}
  \caption{A walk of length 1000 on $\kS = \{ \textbf{N,~E,~S,~W}\}$. }\label{walkex1}
  \end{center}
\end{figure}

Our approach uses generating functions, that is, formal power series of the form
\[
 R_\kS(t) = \sum_{n \geq 0} r_\kS(n)t^n,
\]
where $r_\kS(n)$ is the number of walks of length $n$ taken with steps from $\kS$ in the desired region. We index the generating functions by the step set the lattice paths are taken on, and the region that the paths are restricted to will be clear from the context. The growth constants, or exponential growth factors, are found in the asymptotic expression for the sequence$~(r_\kS(n))$. For the cases considered here, these expressions take the form
\[
  r_\kS(n) \sim \kappa \beta^n \theta(n),
\]
Where $\kappa$ is a positive constant, $\beta$ is the exponential growth factor and $\theta(n)$ is some subexponential growth factor. We define this formally later, but in an intuitive the asymptotic expression is an approximation of the sequence $r_\kS(n)$, the quality of which increases with $n$. The form taken by $\theta(n)$ can vary, and is related to the kinds of equations that $R_\kS(t)$ satisfies. We focus on generating functions classified as rational, algebraic, transcendental D-finite and non-D-finite. For some classes of generating functions, the form that $\theta(n)$ takes is known: for example, for algebraic functions $\theta(n)$ is a Laurent monomial, and for rational functions $\kappa = \theta(n) = 1$. We give a summary of the current state of research, including our contribution, of the four classes of planar lattice paths discussed in this thesis in Table \ref{fetabintro}.

In \cite{BoMi10}, Bousquet-M\'{e}lou and Mishna show that there are 79 non-equivalent models of walks confined to the quarter plane. They then classified these according to the cardinality of a group associated with the step set. For 23 models, the groups are finite and the associated generating functions are proven to be D-finite, that is, they satisfy a linear differential equation. Unfortunately the proof does not give the differential equation explicitly, and even if it did, there would be no guarantee of finding a solution. They conjectured the remaining 56 to be non D-finite, and this was very recently proved for 51 cases by Kurkova and Raschel in \cite{KuRa11}. 

We provide here a combinatorial method which will confirm the exponential growth factors numerically obtained by Bostan and Kauers in \cite{BoKa09} for all of the 23 D-finite cases. The data provided by Bostan and Kauers is an invaluable guide (as it usually is when you know what you're trying to prove), but often the exponential growth factors can also be hypothesised using simpler numerical methods. They show in \cite{BoKa09} that these generating functions are not only D-finite, but are also $G$-series. This means that the number of walks grows like
\[
 \kappa \beta^n n^\alpha (\log n)^\gamma
\]
for some $\kappa,\alpha, \beta, \gamma$. By generating the sequence using experimental techniques and taking the ratio of successive terms far along the sequence, we can approximate $\beta$. For instance, if we are looking at a counting sequence with an algebraic generating function, the ratio of two successive terms would be
\[
 \frac{w(n+1)}{w(n)} \sim \frac{\kappa \beta^{n+1} (n+1)^\alpha }{\kappa \beta^{n} n^\alpha} = \beta \left(\frac{n+1}{n} \right)^\alpha.
\]
Using this, we can get an approximation for $\beta$ that decreases in quality as $\alpha$ grows. In \cite{BoKa09}, Bostan and Kauers never encounter an exponent $\alpha$ larger than $-1/2$, so taking $\alpha = 1$ would provide a reasonable worst case scenario. For $n = 1000$ terms, this gives $1.001 \beta$ as a result, and for $n = 2000$, $1.0005 \beta$. In general, this is not precise enough that a computer could guess the growth constants, but if the result is close to the growth constant of a related model, then it seems clear to a human what these are converging on.

The method for the combinatorial proof of these 23 factors appeals to relaxations and restrictions of the models. By considering the right relaxation (respectively restriction), one can place an upper (resp. lower) bound on the exponential growth factor. Elementary analysis does the rest.

In Chapter \ref{def}, we introduce the formal machinery of lattice paths, and some supporting notation that we find useful throughout the text. These are at the most general level only, and more specific definitions will be given in later chapters.

Chapter \ref{statmech} will give an outline of the link between exponential growth constants and statistical mechanical properties. We also discuss the famous Self Avoiding Walk (SAW) model in three different configurations, and the current state of knowledge on bounds for the empirically known exponential growth factor.

In Chapter \ref{ac}, the necessary elements of analytic combinatorics are surveyed. This starts from an introduction of a symbolic method, and ends with a proof of the correspondence between the radius of convergence of a generating function and the exponential growth factor of the coefficient sequence.

Chapter \ref{undir} reviews planar lattice walks with small steps which are completely unrestricted. This is the classical case, easily enumerated with a rational generating function.

Chapter \ref{dir} is the first restriction to the general class, where we consider directed paths. These paths have a priveleged direction of increase, with the $x$-coordinate of the endpoint equal to the length of the path. We survey results from \cite{BaFl02}, with an emphasis on directed paths confined to the upper half plane. Generating functions are shown to be algebraic through the use of the kernel method, and asymptotic enumeration is given. This chapter is significant, as it contains the kind of result that we are working towards: exact enumerative formulas dependent on the drift parameter.

Chapter \ref{hpwalks} considers the class of walks confined to the upper half plane. A bijection between this class and the class of directed paths from Chapter \ref{dir} is given. This shows generating functions to be algebraic, and enumerative results follow immediately from the results of \cite{BaFl02} given in Chapter \ref{dir}

Chapter \ref{qpwalks} covers the main problem, planar lattice walks with small steps confined to the quarter plane. The class is split into two families, those known to be D-finite and those known or conjectured to be non D-finite. Here we give an original, systematic method of proving the exponential growth factors for 23 D-finite models of quarter plane lattice paths.

Chapter \ref{conclusion} concludes the thesis. We consider the generalisations of the technique to all non-equivalent quarter plane models. We also consider the prediction of the polynomial growth of the counting sequence, and how we can put this work into a single, general framework.

\chapter{Definitions and Notation} \label{def}
This chapter introduces formal definitions relating to planar lattice paths with small steps.

\begin{defn}
Fix a finite set of vectors in $\ZZ^2$, $\kS = \{(a_1,b_1),(a_2,b_2),...,(a_m,b_m)\}$. A \textit{lattice path} or \textit{walk} taken on $\kS$ is a sequence $v = (v_1,v_2,...,v_n)$ such that each $v_i$ is in $\kS$. We may realise this geometrically as the sequence of points $(p_0,p_1,...,p_n)$ such that $p_0 = (0,0)$ and the vector difference $p_{i} - p_{i-1} = v_i$. The elements of $\kS$ are known as the \textit{steps}, and the number of steps in the sequence $v$, $n$, is called the length of the path.
\end{defn}

We recall the definiton of a Laurent polynomial which encodes the step set $\kS$ in a useful way.

\begin{defn}
For a set of steps $\kS = \{(a_1,b_1),(a_2,b_2),...,(a_m,b_m)\}$ we define the \textit{inventory}, or \textit{characteristic polynomial}, of $\kS$ to be the Laurent polynomial denoted by $S(x,y)$ which is defined by
\[
 S(x,y) := \sum_{i=1}^m x^{a_i}y^{b_i}.
\]
\end{defn}

Sometimes it is useful to add weights to the steps of a walk. In combinatorial applications, these weights are often natural numbers. Here they correspond to different coloured steps in the same direction. In probabilistic applications, the the total weight of a step set (the sum of weights over the steps) would be equal to 1. In either case, we define a set of weights $\Pi = \{w_1,...,w_m\}$, and modify the inventory to include the weights:
\[
 S(x,y) = \sum_{i=1}^m w_ix^{a_i}y^{b_i}.
\]

In order to simplify analysis we restrict ourselves to what are known as small steps until the end of Chapter \ref{qpwalks}. This is just a way of saying that at each step, the walk moves only to nearest integral lattice points diagonally, horizontally and vertically. We discuss the generalisation to larger steps in Chapter \ref{conclusion}.

\begin{defn}
If $\kS$ is a subset of $\bar{\kS} = \{-1,0,1\}^2 \setminus \{ (0,0) \}$ then we say that $\kS$ is a set of \textit{small steps}.
\end{defn}

Sometimes, rather than using a vector representation it is convenient to represent a step set pictorially, as in Figure \ref{smallsteps}, or by the corresponding points on the compass. So, moving clockwise from the $(0,1)$ step in Figure \ref{smallsteps} these would be \textbf{N, NE, E, SE, S, SW, W, NW}, with corresponding inventory 
\[
  S(x,y) = y + xy + x + \frac{x}{y} + \frac1y + \frac1{xy} + \frac1x + \frac{y}{x}.
\]

\begin{figure}[h]
\begin{center}
 \diagb{N,NE,NW,W,E,SW,SE,S}
  \caption{The full set of small steps $\bar{\kS}$.}\label{smallsteps}
\end{center}
\end{figure}

We can also represent a walk with the corresponding polygonal line in $\ZZ^2$. Shown in Figure$~$\ref{walkex} is a geometric representation of a typical unrestricted walk on the set of small steps $\kS = \{\textbf{NE,~SE,~SW,~NW} \}$. By unrestricted we mean that we allow self intersections and the final point $p_n$ to be anywhere.

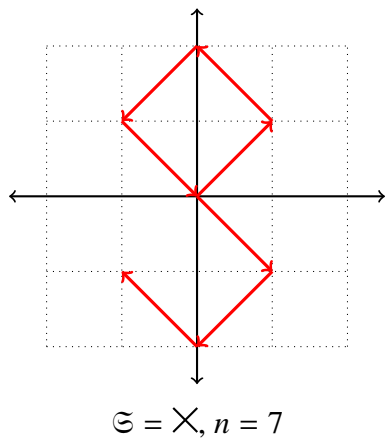
\begin{figure}[h]
 \begin{center}
    \begin{tikzpicture}[scale = 1]
    \draw[<->, thick] (-2.5,0) -- (2.5,0);
    \draw[<->, thick] (0,-2.5) -- (0,2.5);
    \draw[-, dotted] (-2,1) -- (2,1);
    \draw[-, dotted] (-2,2) -- (2,2);{itemize}
    \draw[-, dotted] (1,-2) -- (1,2);
    \draw[-, dotted] (2,-2) -- (2,2);
    \draw[-, dotted] (-1,-2) -- (-1,2);
    \draw[-, dotted] (-2,-2) -- (-2,2);
    \draw[-, dotted] (-2,-1) -- (2,-1);
    \draw[-, dotted] (-2,-2) -- (2,-2);
    \draw[->, very thick,red] (0,0) -- (1,1);
    \draw[->, very thick,red] (1,1) -- (0,2);
    \draw[->, very thick,red] (0,2) -- (-1,1);
    \draw[->, very thick,red] (-1,1) -- (0,0);
    \draw[->, very thick,red] (0,0) -- (1,-1);
    \draw[->, very thick,red] (1,-1) -- (0,-2);
    \draw[->, very thick,red] (0,-2) -- (-1,-1);
    \end{tikzpicture}
    
    $\kS = \diagr{NW,NE,SE,SW}$, $n=7$
    \caption{A walk of length 7 on a typical set of small steps $\kS$.}\label{walkex}
  \end{center}
\end{figure}

We find it useful at times to take the partial sum of a Laurent series or polynomial in which the exponents of a variable are negative. We recall some notation for this.

\begin{defn}
Let $H(y)$ be a Laurent series or polynomial in $y$ with coefficients taken from$~\ZZ(x)$. We denote by $\{y^{<0}\}H(y)$ the \textit{projection onto the pole part} of $H(y)$: the partial sum of $H(y)$ where all terms contain a negative index of~$y$.
\end{defn}

We illustrate this by example after the following definition.

\begin{defn}
We say that a step $(i,j)$ is $x$-positive if $i>0$. Similarly, $(i,j)$ is $x$-negative if$~i<0$, $y$-positive if $j > 0$ and $y$-negative if $j < 0$.
\end{defn}

\begin{ex}
Take $S(x,y)$ to be the inventory of the complete set of small steps. Then
\[
 \{y^{<0}\}S(x,y) = \{y^{<0}\} \left( y + xy + x + \frac{x}{y} + \frac1y + \frac1{xy} + \frac1x + \frac{y}{x} \right) =  \frac{x}{y} + \frac1y + \frac1{xy}
\]
is the Laurent polynomial representing all of the $y$-negative steps.
\end{ex}

Finally, we formally define what we mean by asymptotic growth. As mentioned in the introduction, intuitively it is a continuous function of $n$ which is an approximation of a sequence $(a_n)$, the quality of which improves as $n$ increases.

\begin{defn}
Let $a:\NN \rightarrow \NN$ be a function (such as the coefficient sequence of a generating function associated with a combinatorial class). We say that $a_n$ \textit{grows like} or has \textit{asymptotic growth} $g(n)$, $g: \RR^+ \rightarrow \RR^+$, denoted 
\[
 a(n) \sim g(n)
\]
if
\[
 \lim_{n \rightarrow \infty} \frac{a_n}{g(n)} = 1.
\]
\end{defn}

This concludes the general definitions for planar lattice paths with small steps. In Chapter \ref{ac}, more definitions are given as the formal framework of analytic combinatorics is built, and in Part \ref{plp}, more specific definitions will be made for each class, as restrictions are introduced and more machinery becomes necessary.

\chapter{Statistical Mechanics}\label{statmech}

Recent activity in lattice path combinatorics (for example, \cite{Bo10,BoKa09,BosKau09,BoMi10,MiRe09}) is due in part to the success with which they are applied to a wide variety of areas. Two often cited examples are applications to other combinatorial models through bijections, such as between lattice paths and plane partitions in the work of Alegri \textit{et al.} in \cite{AlBrSada11}, and queueing theory, as in B\"{o}hm's work \cite{Bo10} on lattice path counting and the theory of queues, but the example which will form the main focus of this chapter is the application of lattice models to the modelling of physical phenomena through statistical mechanics.

These applications take different forms, and a nice introduction to a number of these can be found in Janse van Rensburg's book `\textit{The statistical mechanics of interacting walks, polygons, animals and vesicles}' \cite{va00}. Of these applications, the most significant to us is the self avoiding walk (SAW). These walks are defined in the same way as our walks in Chapter \ref{def}, but we make the restriction that in the sequence of points $(p_0,...,p_n)$ there is no $i \neq j$ such that $p_i = p_j$. That is, the walk may not visit the same place twice. This makes these walks a good model for linear polymers, something we explain more in the next section. First, we should explain why combinatorial models are well suited to physical problems such as these.

Information about the entropy of a physical system is available in the form of a model's enumerative information. In statistical mechanics, entropy is a measure of the number of ways in which a system may be arranged, often taken to be a measure of `disorder' (the higher the entropy, the higher the disorder). This definition describes entropy as being proportional to the natural logarithm of the number of possible configurations of the individual atoms (with Boltzmann's constant as the proportionality constant). In order to find the number of possible configurations, we would be forced to count them. This is where enumerative combinatorics comes in. Since these are purely mathematical models we ignore the proportionality constant and any units of measurement, and focus solely on the enumeration.

So, if $\kS$ is a step set, and $\cP$ is a class of lattice paths on $\kS$ modeling some physical phenomena, we can find the combinatorial version of the entropy $E(n)$ by taking
\[
 E(n) = \log p(n),
\]
where $p(n)$ is the number of lattice paths in $\cP$ of length $n$. However, sometimes in statistical mechanics it is desirable to understand the behaviour of a model as the size approaches infinity. The sequence $p(n)$ is unbounded in general, so if this limit were taken the entropy would also be unbounded. There is another measurement though, related to entropy, called the \textit{limiting free energy} or the \textit{thermodynamic limit}. It is given as
\[
 F = \lim_{n \rightarrow \infty} \frac1n E(n).
\]
Now for many lattice paths (including those found in this thesis) the counting sequence has growth
\[
 p(n) \sim \kappa n^{\alpha} \beta^n,
\]
for $\kappa, \beta \in \RR^+$ and $\alpha \leq 0$, so the limiting free energy would be
\[
 F = \lim_{n \rightarrow \infty} \frac1n \log (\kappa n^{\alpha} \beta^n) = \beta,
\]
which is also known as the exponential growth factor of the number of states. So by finding exponential growth factors for models, we can give information about the entropy of a statistical mechanical system.

In the following section we survey a few different self-avoiding lattice paths and their applications. In light of the link between entropy and exponential growth, we also give the asymptotic expressions for each model.

\subsection*{Self-avoiding walks}\label{SAW}

A polymer is a connected arrangement (similar to a graph-theoretic structure) of molecules known as monomers. Polymers can take on tree-like structures, have internal polygonal structures, or simply be a linear chain of monomers. A linear polymer is a polymer where$~2$ monomers have only one neighbour and the remaining monomers have exactly 2 neighbours. As already mentioned, self avoiding walks (SAW) make very succesful models of linear polymers suspended in solution, see \cite{ReWhBrOw05,va10,BrDyLeOwPrReWh09} for some examples of this application in 2 and 3 dimensions. We focus on 2 dimensions here.

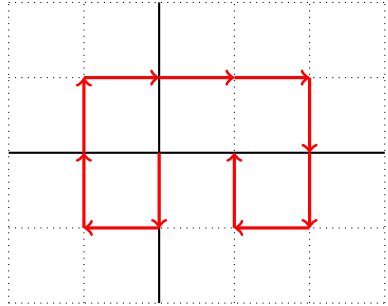
\begin{figure}
 \begin{center}
  \begin{tikzpicture}[scale = 1]
    \draw[thick,black] (-2,0) -- (3,0);
    \draw[thick,black] (0,-2) -- (0,2);
    \draw[-,dotted,black] (-2,1) -- (3,1);
    \draw[-,dotted,black] (-2,2) -- (3,2);
    \draw[-,dotted,black] (-2,-1) -- (3,-1);
    \draw[-,dotted,black] (-2,-2) -- (3,-2);
    \draw[-,dotted,black] (1,-2) -- (1,2);
    \draw[-,dotted,black] (2,-2) -- (2,2);
    \draw[-,dotted,black] (3,-2) -- (3,2);
    \draw[-,dotted,black] (-1,-2) -- (-1,2);
    \draw[-,dotted,black] (-2,-2) -- (-2,2);
    \draw[->,very thick,red] (0,0) -- (0,-1);
    \draw[->,very thick,red] (0,-1) -- (-1,-1);
    \draw[->,very thick,red] (-1,-1) -- (-1,0);
    \draw[->,very thick,red] (-1,0) -- (-1,1);
    \draw[->,very thick,red] (-1,1) -- (0,1);
    \draw[->,very thick,red] (0,1) -- (1,1);
    \draw[->,very thick,red] (1,1) -- (2,1);
    \draw[->,very thick,red] (2,1) -- (2,0);
    \draw[->,very thick,red] (2,0) -- (2,-1);
    \draw[->,very thick,red] (2,-1) -- (1,-1);
    \draw[->,very thick,red] (1,-1) -- (1,0);
  \end{tikzpicture}
  \caption{A SAW which cannot be extended to form a longer SAW.}\label{norec}
 \end{center}
\end{figure}

We call the class of self avoiding walks $\cA$, and they are defined as in the beginning of this chapter. There is a problem though: this class isn't amenable to recursion. That is, not all SAWs can be added to in order to create a SAW of longer length. Figure \ref{norec} shows an example of a SAW on $\kS = \{\textbf{N,~E,~S,~W}\}$ which cannot be extended to create a SAW of longer length: any extension of the path will violate the self avoidance restriction. A lack of a recursive structure means that we cannot build a specification and hence a functional equation satisfied by a generating function, as is done in Chapter \ref{ac}. Thus, many of the enumerative results on SAW are empirical, including the asymptotic expression for the number of walks of length $n$:
\[
 a(n) \sim \kappa (2.638)^nn^{11/32},
\]
where $\kappa$ is a positive constant \cite{GuCo01}.

If we make restrictions on the class, simplified models can be created which either  are automatically self avoiding (directed paths) or are able to be attacked recursively with self avoidance enforced (partially directed paths). These simplifications allow researchers to work on self avoiding combinatorial models in the usual way, finding rigourous results on related problems and forging new techniques for attacking an old, stubborn problem.

Even very simple combinatorial models have been succesfully applied in this way. One example is the application of directed paths to the statistical mechanics of linear polymers subject to a force, such as in an atomic force microscopy (AFM) experiment. We show an example of a directed path in Figure \ref{dirp}. Atomic force microscopes are able to interfere with a substance at an atomic scale, so in linear polymer experiments they would dip into a solution and hold on to a polymer by a single monomer, applying a force in a vertical or horizontal direction, making the polymer take a shape similar to one of these types of paths. The methodology is to analyse combinatorial models for phase transition and entropic information through asymptotic analysis of associated generating functions, and confirm with AFM experiments \cite{WhIl12}.

\begin{figure}[h]
 \begin{center}
  \begin{tikzpicture}[scale = 1]
  \draw[->, thick] (0,0) -- (8,0);
  \draw[<->, thick] (0,-2) -- (0,2);
  \draw[-, dotted] (0,1) -- (8,1);
  \draw[-, dotted] (0,2) -- (8,2);
  \draw[-, dotted] (1,-2) -- (1,2);
  \draw[-, dotted] (2,-2) -- (2,2);
  \draw[-, dotted] (3,-2) -- (3,2);
  \draw[-, dotted] (0,-1) -- (8,-1);
  \draw[-, dotted] (0,-2) -- (8,-2);
  \draw[-, dotted] (0,-2) -- (8,-2);
  \draw[-, dotted] (4,-2) -- (4,2);
  \draw[-, dotted] (5,-2) -- (5,2);
  \draw[-, dotted] (6,-2) -- (6,2);
  \draw[-, dotted] (7,-2) -- (7,2);
  \draw[-, dotted] (8,-2) -- (8,2);
  \draw[->, very thick,red] (0,0) -- (1,1);
  \draw[->, very thick,red] (1,1) -- (2,2);
  \draw[->, very thick,red] (2,2) -- (3,2);
  \draw[->, very thick,red] (3,2) -- (4,1);
  \draw[->, very thick,red] (4,1) -- (5,0);
  \draw[->, very thick,red] (5,0) -- (6,-1);
  \draw[->, very thick,red] (6,-1) -- (7,-2);
  \draw[->, very thick,red] (7,-2) -- (8,-1);
  \end{tikzpicture}
  \caption{A directed path on $\kS = \{ \textbf{NE,~E,~SE}\}$.}\label{dirp}
 \end{center}
\end{figure}

We also find rigorous bounds on the exponential growth of SAW by considering a subclass. Take the class $\cP$ of partially directed paths on the square lattice. This is a walk taken on the set$~\kS = \{ \textbf{N,~E,~S}\}$ with self avoidance enforced. An example is shown in Figure \ref{pdp}. 

\begin{figure}[]
 \begin{center}
  \begin{tikzpicture}[scale = 1]
  \draw[->, thick] (0,0) -- (6,0);
  \draw[<->, thick] (0,-3) -- (0,3);
  \draw[-, dotted] (0,1) -- (6,1);
  \draw[-, dotted] (0,2) -- (6,2);
  \draw[-, dotted] (0,3) -- (6,3);
  \draw[-, dotted] (1,-3) -- (1,3);
  \draw[-, dotted] (2,-3) -- (2,3);
  \draw[-, dotted] (3,-3) -- (3,3);
  \draw[-, dotted] (0,-1) -- (6,-1);
  \draw[-, dotted] (0,-2) -- (6,-2);
  \draw[-, dotted] (0,-3) -- (6,-3);
  \draw[-, dotted] (4,-3) -- (4,3);
  \draw[-, dotted] (5,-3) -- (5,3);
  \draw[-, dotted] (6,-3) -- (6,3);
  \draw[->, very thick,red] (0,0) -- (0,1);
  \draw[->, very thick,red] (0,1) -- (0,2);
  \draw[->, very thick,red] (0,2) -- (0,3);
  \draw[->, very thick,red] (0,3) -- (1,3);
  \draw[->, very thick,red] (1,3) -- (1,2);
  \draw[->, very thick,red] (1,2) -- (1,1);
  \draw[->, very thick,red] (1,1) -- (1,0);
  \draw[->, very thick,red] (1,0) -- (1,-1);
  \draw[->, very thick,red] (1,-1) -- (1,-2);
  \draw[->, very thick,red] (1,-2) -- (1,-3);
  \draw[->, very thick,red] (1,-3) -- (2,-3);
  \draw[->, very thick,red] (2,-3) -- (2,-2);
  \draw[->, very thick,red] (2,-2) -- (2,-1);
  \draw[->, very thick,red] (2,-1) -- (3,-1);
  \draw[->, very thick,red] (3,-1) -- (3,-2);
  \draw[->, very thick,red] (3,-2) -- (3,-3);
  \draw[->, very thick,red] (3,-3) -- (4,-3);
  \draw[->, very thick,red] (4,-3) -- (4,-2);
  \draw[->, very thick,red] (4,-2) -- (4,-1);
  \draw[->, very thick,red] (4,-1) -- (4,0);
  \draw[->, very thick,red] (4,0) -- (4,1);
  \draw[->, very thick,red] (4,1) -- (5,1);
  \draw[->, very thick,red] (5,1) -- (5,0);
  \draw[->, very thick,red] (5,0) -- (5,-1);
  \draw[->, very thick,red] (5,-1) -- (5,-2);
  \draw[->, very thick,red] (5,-2) -- (6,-2);
  \end{tikzpicture}
  \caption{A partially directed path provides a lower bound on the exponential growth of SAW.}\label{pdp}
  \end{center}
\end{figure}

The walks in $\cP$ form a subclass of general SAW on the square lattice, and can be solved explicitly, with generating function
\[
 P(t) = \frac{t(1-t)}{1 - 2t - t^2}.
\]
The dominant singularity for this generating function is a simple pole at $\sqrt2 - 1$. As will be explained in Chapter \ref{ac}, we can use singularity analysis to compute
\[
 p(n) \sim \kappa \left(\frac{1}{\sqrt2 - 1}\right)^n \approx \kappa (2.414)^n.
\]
Using the techniques we discuss throughout this thesis, we can use this asymptotic estimate to provide a rigorous lower bound on the exponential growth factor for all SAW on the square lattice. 

Combinatorics could be used to analyse other thermodynamic properties of a system. At any moment, a system is distributed across an ensemble of $N$ microstates, each denoted by $i$ and having a probability of occupation $p_i$, and an energy $E_i$. These microstates form a discrete set as defined by quantum statistical mechanics, and $E_i$ is an energy level of the system. One may then look at the internal energy of a system, which is the mean of the energies $E_i$, or the heat and work of a system. Moreover, lattice statistical mechanics is not restricted to systems modeled by lattice paths \cite{va00}. This makes the interface between combinatorics and statistical mechanics a rich junction, with combinatorial approaches not limited to enumeration.

\chapter{Analytic combinatorics}\label{ac}

When trying to enumerate a family of objects which is amenable to recursion, like lattice paths, a standard tool is the generating function. These were originally introduced as formal power series, or as Wilf colourfully puts it in \cite{Wi94}, a `clothesline to hang a sequence upon,' these tools have proved very useful in many applications. Stanley has written two volumes \cite{St99,St11} on the topic of enumerative combinatorics in general, covering applications of generating functions to algebraic structures as well as the algebraic nature of the objects themselves, providing in \cite{St99} a classification of formal power series which we shall employ a little later on.

In \cite{FlSe09}, Flajolet and Sedgewick take the field much further, considering the formal power series inside their radii of convergence as complex analytic functions: no longer just algebraically. This allows the complex analysis theorems of Cauchy to be applied with impressive enumerative results, obtaining asymptotic expressions for the coefficient sequences of the generating function with little effort (once the theorems have been proved). They describe their approach as an operational calculus for combinatorics organised around three components: symbolic methods, complex asymptotics and random structures. The following gives an introduction to the symbolic methods and complex asymptotics, drawing on elements from \cite{FlSe09,Wi94,St11}.

\section{Combinatorial classes and ordinary generating functions}
We follow the exposition of the symbolic methods given in \cite{FlSe09}, concentrating on the points useful to us. The first of these will be the combinatorial class. Earlier, we mentioned that we aim to enumerate, either exactly or asymptotically, subclasses of $\cW_\kS$, the class of planar lattice walks on a given step set $\kS$.

\begin{defn}
A \textit{combinatorial class} or simply \textit{class}, is a finite or denumerable set on which a size function is defined, satisfying the following conditions:
\begin{enumerate}
 \item the size of an element is a non-negative integer;
  \item the number of  elements of any given size is finite.
\end{enumerate}
\end{defn}

If $\cA$ is a class and $\alpha$ is an object in the class, we denote the size of $\alpha$ by $|\alpha|$ or $|\alpha|_\cA$ when more than one class is under consideration. For a lattice walk $w \in \cW$, $|w|$ is the length of the path, or the number of steps taken. For a class $\cA$, we usually denote by $\cA_n$ the set of objects in $\cA$ which have size $n$, and by $a_n$ the cardinality of $\cA_n$.

\begin{defn}
The sequence $(a_n)$ is known as the \textit{counting sequence} of the combinatorial class $\cA$.
\end{defn}

The notion of isomorphism presents itself in this context as follows.

\begin{defn}
Two combinatorial classes, $\cA$ and $\cB$ are said to be (combinatorially) \textit{isomorphic}, written $\cA \cong \cB$, if and only if their counting sequences are identical, that is, $a_n = b_n$ for every $n \geq 0$. This condition is equivalent to a bijection from $\cA$ to $\cB$ which preserves size, and we say that $\cA$ and $\cB$ are bijectively equivalent.
\end{defn}

It should be noted that while two classes may be equinumerous, and able to be shown to be isomorphic, it is not always easy or possible to find a nice or natural transformation between two classes that bijectively maps one to another. The enumerative information about a class $\cA$ is stored in a formal power series $A(t)$.

\begin{defn}
The \textit{ordinary generating function (OGF)} of a sequence $(a_n)_n$ is the formal power series
\[
 A(t) = \sum_{n=0}^{\infty} a_nt^n.
\]
The OGF for a combinatorial class $\cA$ is the generating function for the counting sequence $a_n = |\cA_n|$, with the variable $t$ marking size.
\end{defn}

There are two special classes: the empty class and the atomic class. The empty class, denoted by $\cE$, is the unique class (up to isomorphism) containing one object of size 0 and has OGF $E(t) = 1$. The atomic class, denoted sometimes by $\cZ$, is the class containing one object of size one with OGF $Z(t) = t$. In some applications we differentiate between atoms: for instance, to represent steps in different directions. When we differentiate a \textbf{NE} step and a \textbf{SE} step, we would use $\cZ_{NE}$ and $\cZ_{SE}$.

Just a brief note on terminology: since the counting sequence for the class $\cA$ is also the coefficient sequence for the generating function $A(t)$, we shall be using both terms when referring to the sequence $(a_n)$.

Note the naming convention at work here: a class $\cA$ has subclasses $\cA_n$ of elements of size $n$, each with cardinality $a_n$, and generating function $A(t)$. Also, sometimes we would like to find the coefficient of the $t^n$ term in the power series. We call this action coefficient extraction, and denote it by
\[
 [t^n]A(t) = a_n.
\]

Generating functions are manipulated algebraically as formal power series. Formally, they are elements of the ring of formal power series $\CC[\![x]\!]$. The operations of sum and Cauchy product in the ring of formal power series are interpreted quite naturally as operations on the combinatorial structures themselves. Let's investigate these operations: let $\cA$ and $\cB$ be a pair of combinatorial classes with no intersection. We define two binary operations on $\cA$ and $\cB$: combinatorial sum ($+$) and combinatorial product ($\bullet$). 

We define the sum of disjoint classes $\cA$ and $\cB$ like so:
\[
 \cA + \cB := \cA \cup \cB.
\]
Inside our new unified class, the class of objects of size $n$ is the union $\cA_n \cup  \cB_n$. Since $\cA \cap\cB = \emptyset$ the cardinality of $\cA_n \cup \cB_n$ is simply $a_n + b_n$. We call the new sum class $\cC$, which has OGF
\[
 C(t) = \sum_{n \geq 0} (a_n + b_n)t^n,
\]
which is clearly the sum of the OGFs $A(t)$ and $B(t)$.

Next, we define the product of $\cA$ and $\cB$ as the cartesian product:
\[
 \cA \bullet \cB := \cA \times \cB.
\]
The elements of the product class are ordered pairs $(\alpha,\beta)$, and the size of an ordered pair is defined to be the sum of the sizes of its entries. So if $|\alpha| = 2$ and $|\beta| = 4$, then $|(\alpha,\beta)| = 6$. We call the new product class $\cP$. Let us look again at the subclass $\cP_n$ of objects of size $n$. The elements are ordered pairs $(\alpha,\beta)$ with, for each $k = 0,...,n$, $|\alpha| = k$ and $|\beta| = n-k$. For each $k$ there are $a_kb_{n-k}$ such ordered pairs, so the generating function for this product class is
\[
 P(t) = \sum_{n \geq 0} \left( \sum_{k = 0}^n a_kb_{n-k} \right) t^n,
\]
which is precisely the Cauchy product of the generating functions $A(t)$ and $B(t)$.

We may now extend the definition of the sum to include classes that share some elements. Suppose that $\cA$ and $\cB$ did have a non-empty intersection. Then we would simply take
\[
 \cA + \cB := \cA \sqcup \cB,
\]
where $\sqcup$ is a disjoint union of the two classes. We take this by representing each class as a product: $\cA \cong \cA \times \cE$ and $\cB \cong \cE \times \cB$. Since $\cE$ is the empty class, this transformation adds no weight to the class (hence the congruence asserted above is legitimate) and the disjoint union still corresponds to the sum of the OGFs.

For many of our cases, we will not be finding a generating function, but a functional equation that the generating function satisfies. Take for example the geometric series
\[
 W(t) = \sum_{n \geq 0} 2^nt^n.
\]
This is the generating function for the number of unrestricted lattice walks on the step set~$\{\textbf{NE,~SE}\}$. This satisfies the algebraic functional equation
\[
 W(t) = 1 + 2tW(t),
\]
which is an algebraic equation with polynomial coefficients. Thus the solution, the generating function, is an algebraic function given by the sum of a geometric progression
\[
 W(t) = \frac{1}{1-2t}.
\]
Solutions to algebraic equations exist and are unique up to multiplicity of roots, and this fact gives us a quick way of checking an isomorphism between two classes. If two classes have generating functions which satisfy the same functional equation, then the coefficient sequences of each will satisfy the same recursion. To prove isomorphism, all that remains to be done is a comparison of the first few terms of each sequence. A good example where the isomorphism fails due to a different sequence are the neutral class,  with GF $E(t) = 1$, and atomic class, with GF $Z(t) = t$, which both satisfy the same algebraic equation $A(t)^2 = A(t)$ but are clearly not equinumerous.

\begin{ex}\label{dyck}
(Dyck paths.) We can use the theory developed so far to investigate what is probably the most famous example of a class of lattice paths, $\cD$, the class of Dyck paths. These are paths on the step set $\kS = \{\textbf{NE,~SE}\}$, beginning at the origin and ending on the $x$-axis, without leaving the first quadrant - in the nomenclature of \cite{BaFl02} they are an example of a directed excursion. A Dyck path of length 16 is shown in Figure \ref{D16}. 

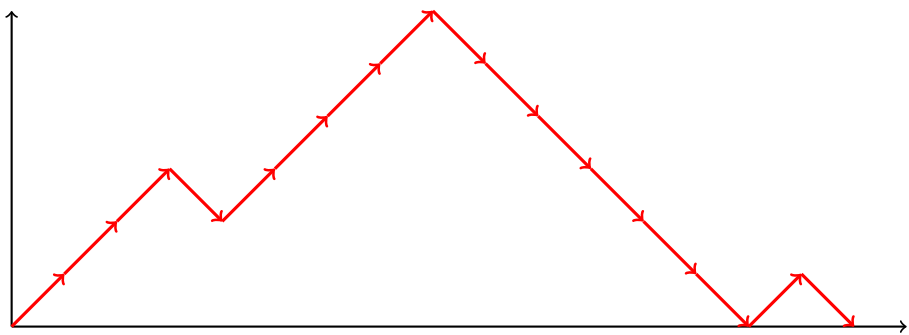
\begin{figure}[h]
 \begin{center}
  \begin{tikzpicture}[scale=0.7]
    \draw[->,thick] (0,0) -- (17,0);
    \draw[->,thick] (0,0) -- (0,6);
    \draw[->,very thick,red] (0,0) -- (1,1);
    \draw[->,very thick,red] (1,1) -- (2,2);
    \draw[->,very thick,red] (2,2) -- (3,3);
    \draw[->,very thick,red] (3,3) -- (4,2);
    \draw[->,very thick,red] (4,2) -- (5,3);
    \draw[->,very thick,red] (5,3) -- (6,4);
    \draw[->,very thick,red] (6,4) -- (7,5);
    \draw[->,very thick,red] (7,5) -- (8,6);
    \draw[->,very thick,red] (8,6) -- (9,5);
    \draw[->,very thick,red] (9,5) -- (10,4);
    \draw[->,very thick,red] (10,4) -- (11,3);
    \draw[->,very thick,red] (11,3) -- (12,2);
    \draw[->,very thick,red] (12,2) -- (13,1);
    \draw[->,very thick,red] (13,1) -- (14,0);
    \draw[->,very thick,red] (14,0) -- (15,1);
    \draw[->,very thick,red] (15,1) -- (16,0);
  \end{tikzpicture}
 \end{center}
  \caption{A Dyck path of length 16.} \label{D16}
\end{figure}

These paths are well known, and the number of Dyck paths of length $2n$ is known to be the $n$th Catalan number, 
\[
 C_n = \frac1{n+1} \binom{2n}{n},
\]
so the OGF for $\cD$ is given by
\[
 D(t) = \sum_{n \geq 0} \frac1{n+1}\binom{2n}{n} t^{2n}.
\]

We can use the operations of sum and product to find a functional equation satisfied by the OGF for $\cD$, which we may then solve and use a coefficient extraction to find the Catalan numbers. This is done by decomposing a Dyck path into 2 parts, using what is known as the inital pass decomposition. The decomposition is the following: a member of the class~$\cD$ is either the empty path or a path of non-zero length. If it is a path of non-zero length, after the point of contact with the $x$-axis formed by the beginning of the path, there will be at least one more point of contact. This is known as the initial pass, and is indicated in Figure \ref{Dip} by a blue circle.

Prior to the initial pass there are only two steps in contact with the $x$-axis, which are shown in blue in Figure \ref{Dip}. Between these steps is a shorter (possibly empty) Dyck path beginning and ending on the dashed blue line. After the initial pass there will be another (again, possibly empty) Dyck path beginning and ending on the $x$-axis.

\begin{figure}[h]
 \begin{center}
  \begin{tikzpicture}[scale=0.7]
    \draw[->,thick] (0,0) -- (17,0);
    \draw[->,thick] (0,0) -- (0,6);
    \draw[-,dashed,blue] (0,1) -- (14,1);
    \draw[fill=blue,blue] (14,0) circle (4pt);
    \draw[->,very thick,blue] (0,0) -- (1,1);
    \draw[->,very thick,red] (1,1) -- (2,2);
    \draw[->,very thick,red] (2,2) -- (3,3);
    \draw[->,very thick,red] (3,3) -- (4,2);
    \draw[->,very thick,red] (4,2) -- (5,3);
    \draw[->,very thick,red] (5,3) -- (6,4);
    \draw[->,very thick,red] (6,4) -- (7,5);
    \draw[->,very thick,red] (7,5) -- (8,6);
    \draw[->,very thick,red] (8,6) -- (9,5);
    \draw[->,very thick,red] (9,5) -- (10,4);
    \draw[->,very thick,red] (10,4) -- (11,3);
    \draw[->,very thick,red] (11,3) -- (12,2);
    \draw[->,very thick,red] (12,2) -- (13,1);
    \draw[->,very thick,blue] (13,1) -- (14,0);
    \draw[->,very thick,red] (14,0) -- (15,1);
    \draw[->,very thick,red] (15,1) -- (16,0);
  \end{tikzpicture}
 \end{center}
  \caption{The initial pass decomposition of our Dyck path} \label{Dip}
\end{figure}

This is summarised symbolically in terms of $\cD$, the empty class $\cE$ and the atomic class~$\cZ$ as
\[
 \cD = \cE + \cZ_{NE} \bullet \cD \bullet \cZ_{SE} \bullet \cD.
\]
We then use the definitions of combinatorial sum and product to interpret this in the ring of formal power series containing the OGF for $\cD$:
\[
 D(t) = 1 + tD(t)tD(t) = 1 + t^2D(t)^2.
\]

Still working with $D(t)$ as a formal algebraic object, we solve the equation, a quadratic in the OGF $D(t)$, using the quadratic formula:
\[
 D(t) = \frac{1 \pm \sqrt{1 - 4t^2}}{2t^2}.
\]
We may reject the branch
\[
 \frac{1 + \sqrt{1 - 4t^2}}{2t^2},
\]
since when we develop its power series (by taking Newton's expansion of $(1 - 4t^2)^{1/2}$) we find that the corresponding power series is singular at 0 and contains negative coefficients. Thus, we find that
\[
 D(t) = \frac{1 - \sqrt{1 - 4t^2}}{2t^2},
\]
which we may expand to its power series form by Newton's generalised binomial expansion.
\end{ex}

Finding generating functions as solutions to algebraic equations brings up the question of classification. As with numbers, formal power series which are the solution to algebraic equations are known as algebraic functions. Two examples are the formal power series $W(t)$ and $D(t)$ that we have seen already. In \cite{St99}, Stanley gives a classification of power series into a hierarchy, which we shall discuss presently.

\subsection*{Classification of OGFs}

Let $G$ denote an arbitrary formal power series with coefficients taken from a field $K$. Then~$G$ is an element of the ring of formal power series $K[\![t]\!]$. The power series $G$ is classified as either \textit{rational}, \textit{algebraic}, \textit{transcendental D-finite} or \textit{non D-finite}, terms which will be defined in due course. In Chapter \ref{qpwalks}, we find that a family of D-finite generating functions belong to a smaller class known as G-series, but we don't talk about these in this chapter.

The smallest class is that of rational power series, or those which can be represented as the ratio of two polynomials.

\begin{defn}\label{ratgf}
A formal power series $G \in K[\![t]\!]$ is \textit{rational} if there exist polynomials $Q(t),P(t) \in K[t]$, with $Q(t) \neq 0$, such that
\[
 G = \frac{P(t)}{Q(t)}.
\]
\end{defn}

\begin{ex}
The power series
\[
 W(t) = \frac1{1-2t}
\]
is an example of a rational series.
\end{ex}

Note that a rearrangement of the equation in Definition \ref{ratgf} gives a linear equation for which $G$ is the solution. The definition can be extended to equations that are of higher degree in $G$, giving the general definition of an algebraic power series.

\begin{defn}
A formal power series $G \in K[\![t]\!]$ is \textit{algebraic} if there exist polynomials~$P_0(t),P_1(t),...,P_d(t)\in K[t]$, not all identically 0, such that
\[
 P_0(t) + P_1(t)G + ... + P_d(t)G^d = 0.
\]
The minimal integer $d$ for which such an equation holds is the \textit{degree} of $G$.
\end{defn}

\begin{ex}
As we've already seen, the generating function
\[
 D(t) = \frac{1 - \sqrt{1 - 4t^2}}{2t^2}
\]
for Dyck paths satisfies the algebraic equation
\[
 1 - D(t) + t^2D(t)^2 = 0,
\]
making it an algebraic series of degree 2.
\end{ex}

The class of algebraic series sits inside a larger class, known as \textit{differentiably finite} power series, shortened to \textit{D-finite}. They are also known as \textit{holonomic} functions: a power series is holonomic if it satisfies a linear homogeneous differential equation, which we shall see is very similar to D-finite.

\begin{defn}
Let $G \in K[\![t]\!]$. If there exist polynomials $p_0(t),...,p_d(t) \in K[t]$ with $p_d(t) \neq 0$ such that
\begin{equation}\label{dfin}
 p_d(t)G^{(d)} + p_{d-1}(t)G^{(d-1)} + ... + p_1(t)G' + p_0(t)G = 0,
\end{equation}
where $G^{(j)} = \frac{d^jG}{dt^j}$, then we say that $G$ is a D-finite power series.
\end{defn}

Algebraic functions are D-finite, by the following theorem, taken from Stanley's \cite{St99}, along with a sketch of the proof.

\begin{thm}
\cite[Theorem 6.4.6]{St99}Let $u \in K[[x]]$ be algebraic of degree $d$. Then $u$ is D-finite.
\end{thm}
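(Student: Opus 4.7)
The plan is to exploit the fact that the field $K(x,u)$ is a finite-dimensional vector space over $K(x)$, and to show that all derivatives of $u$ live inside this space. Once that is established, a pigeonhole argument on dimension produces the required linear relation.

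First I would fix a minimal polynomial relation
\[
 P(x,u) = P_0(x) + P_1(x) u + \cdots + P_d(x) u^d = 0,
\]
with $P_d(x) \neq 0$, and observe that $K(x,u)$ is a field extension of $K(x)$ of degree exactly $d$. Hence $\{1, u, u^2, \ldots, u^{d-1}\}$ is a $K(x)$-basis of $K(x,u)$, and any element of $K(x,u)$ is determined by $d$ rational coefficients.

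Next I would show by induction on $j$ that every derivative $u^{(j)}$ lies in $K(x,u)$. The base case $j=0$ is trivial. For $j=1$, differentiating $P(x,u)=0$ with respect to $x$ gives $P_x(x,u) + P_y(x,u)\, u' = 0$, so
\[
 u' = -\frac{P_x(x,u)}{P_y(x,u)},
\]
which is a rational function of $x$ and $u$, hence an element of $K(x,u)$. (Minimality of $d$ ensures $P_y(x,u) \neq 0$, so the quotient makes sense.) For the inductive step, if $u^{(j)} \in K(x,u)$, then writing it as a polynomial in $u$ with coefficients in $K(x)$ and differentiating once more, each term produces either a $K(x)$-rational factor or a factor of $u'$, and we already know $u' \in K(x,u)$; thus $u^{(j+1)} \in K(x,u)$ as well.

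Finally I would apply the pigeonhole principle: the $d+1$ elements $u, u', u'', \ldots, u^{(d)}$ all lie in the $d$-dimensional $K(x)$-vector space $K(x,u)$, so they must be $K(x)$-linearly dependent. This yields rational functions $q_0(x), \ldots, q_d(x)$, not all zero, with
\[
 q_d(x) u^{(d)} + q_{d-1}(x) u^{(d-1)} + \cdots + q_0(x) u = 0.
\]
Clearing denominators by multiplying through by the least common multiple of the denominators produces polynomials $p_0(x), \ldots, p_d(x) \in K[x]$, not all zero, satisfying equation (\ref{dfin}), which exhibits $u$ as D-finite.

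The main obstacle is really just the inductive step showing every derivative lies in $K(x,u)$; the rest is bookkeeping. One must be slightly careful that $P_y(x,u)$ is nonzero (so that the formula for $u'$ is legitimate) and that multiplying through to clear denominators does not accidentally kill the leading polynomial $p_d(x)$ in (\ref{dfin}); both points follow from the minimality of the degree $d$.
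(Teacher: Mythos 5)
Your proof follows the paper's argument essentially verbatim: differentiate the minimal polynomial to express $u'$ as a rational function of $x$ and $u$, induct to place every $u^{(j)}$ in $K(x,u)$, then invoke $\dim_{K(x)} K(x,u) = d$ to conclude that $u, u', \ldots, u^{(d)}$ are $K(x)$-linearly dependent, and clear denominators. The approach and every key step match the paper's proof; you are merely slightly more explicit about the inductive step and the final bookkeeping.
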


\begin{proof}
Let $u$ be as stated. Since $u$ is algebraic, there is some degree $d$ polynomial $P(x,y) \in K[x,y]$ such that $P(x,u) = 0$. Thus
\[
 0 = \frac{d}{dx} P(x,u) = \left. \frac{\partial P(x,y)}{\partial x}\right|_{y = u} +  u' \left. \frac{\partial P(x,y)}{\partial y}\right|_{y = u}.
\]
Since $P(x,y)$ is of minimal degree,
\[
 \left. \frac{\partial P(x,y)}{\partial y} \right|_{y=u} \neq 0,
\]
and so
\[
 u' = - \frac{\left. \frac{\partial P(x,y)}{\partial x}\right|_{y = u}}{\left. \frac{\partial P(x,y)}{\partial y}\right|_{y = u}}.
\]
That is, $u'$ is a rational function in $x$ and $u$, so is an element of the field $K(x,u)$. We can continue inductively to show that for any $k \geq 0$, $u^{(k)} \in K(x,u)$. Now, $u$ is algebraic of degree~$d$, so $\dim_{K(x)} K(x,u) = d$, and therefore $u,u',...,u^{(d)}$ are linearly dependent over~$K(x)$, giving an equation of the same form as Equation (\ref{dfin}), proving that $u$ is D-finite.
\end{proof}

Let's consider a brief example of a D-finite function that is not algebraic.

\begin{ex}
An example of a D-finite function is $G(t) = t^2e^t$, since $G'(t) = (1 + \frac2t)G$. However, $t^2e^t$ is not algebraic. This shows that the algebraic functions form a proper subclass of the D-finite functions.
\end{ex}

Finally, if a power series $G \in K[\![t]\!]$ is not D-finite, we simply say that $G$ is non D-finite. Proof that a generating function is non-D-finite can be difficult, see \cite{MiRe09} for two lattice path examples from a family for which classification is ongoing.

The two expressions
\begin{eqnarray*}
 W(t) &=& \frac{1}{1-2t}, \\
  D(t) &=& \frac{1 - \sqrt{1 - 4t^2}}{2t^2}
\end{eqnarray*}
raise an interesting point about these generating functions: we may view them as standard complex analytic objects by assigning complex values to the variable $t$. Both series will converge to analytic functions for $|t| < \frac12$. The exploitation of the complex analytic properties of the generating functions for enumerative results will form the focus of Section~\ref{asympt}.

The discussion so far has focused on ordinary generating functions, those with a single variable. Lattice paths require something more flexible than this, and Section~\ref{mgf} introduces the required ideas.

\section{Multivariate generating functions}\label{mgf}
The formal power series framework is easily adapted to a multivariate model, where multiple parameters are introduced. When working on the main problem, we use a trivariate power series with parameters keeping track of the end point of a walk. We show a simplified version: a bivariate generating function (BGF) which marks both the length of a walk and the $y$-coordinate of its endpoint, which we call the stopping height.

\begin{ex}
We look at the class $\cW$ of walks taken on the step set $\{ \textbf{NE,~SE}\}$, but unrestricted in where it may visit and end. Walks can be built recursively from shorter ones by simply appending a new step, and this can be done in two ways. Symbolically, we have
\[
 \cW = \cE + \cW \bullet \cZ_{NE}  + \cW \bullet \cZ_{SE},
\]
which we may give as the familiar functional equation
\[
 W(t) = 1 + 2tW(t).
\]
We keep track of the stopping height, and do so by introducing a parameter, $u$. Note that to keep track of height, we must increment the exponent of $u$ by 1 for an upward step, and decrement by 1 for a downward step. To capture this, we move straight into the ring of formal power series, rather than using a symbolic specification. The only difference is that now $K = \CC(u)$ because the walks are allowed to visit points with a negative $y$-value. The corresponding algebraic equation is
\[
 W^*(t,u) = 1 + utW^*(t,u) + \frac{t}{u}W^*(t,u),
\]
which we may solve to find the MGF as the geometric series
\[
 W^*(t,u) = \frac{1}{1 - t\left(u + \frac1u\right)},
\]
where $t$ marks length and $u$ marks stopping height. We can then perform a coefficient extraction with respect to $t$ to find a Laurent polynomial in $u$
\[
 [t^n]W^*(t,u) = \left(u + \frac1u\right)^n,
\]
where the coefficient of $u^k$ is the number of walks of length $n$ with stopping height $k$. To perform the coefficient extraction, we use the following fact:
\begin{equation}\label{coefftrans}
 [u^{k-t}] f(u) = [u^{k}]u^tf(u).
\end{equation}
This allows us to simplify things a little when expanding the binomial. Note that the range of values possible for $k$ is the set $\{-n,-n+1,...,n-1,n\}$, but to simplify calculations we multiply by $u^n$ (as in Equation \ref{coefftrans}) so the range of interest in the new function is $[[0,2n]]$.  So, for $k \in [[0,2n]]$ we have
\[
 [u^{k-n}] \left(u + \frac1u\right)^n = [u^k](u^2 + 1)^n.
\]
This is a binomial with a quadratic term, so for $k \equiv 1 \pmod{2}$, we have
\[
 [u^{k}](u^2 + 1)^n = 0,
\]
and for $k \equiv 0 \pmod{2}$, we have
\[
 [u^{k}](u^2 + 1)^n = \binom{n}{\frac{k}{2}}.
\]
So for $k \in \{0,1,2,...,2n\}$ the coefficients for our MGF are
\[
 [t^nu^{k - n}] W^*(t,u) = \left\{ \begin{array}{ll} 0, & \mbox{if } k \equiv 1 \pmod{2} \\ \binom{n}{k/2} & \mbox{if } k \equiv 0 \pmod{2}. \end{array}\right.
\]
\end{ex}

The classification of formal power series also applies in the multivariate case. Now we take $G \in K[[t_1,...,t_m]]$, and classify it at as rational, algebraic, D-finite or non D-finite over the field $K$. The first definition is the same as the single variable case: $G$ is rational if there exist polynomials~$P,Q \in K[t_1,...,t_n]$ with $Q \neq 0$ such that $QG = P$.

The definition for algebraic is also largely the same. If there exist polynomials~$P_0,...,P_d \in K[t_1,...,t_m]$ not all 0 such that $P_0 + P_1G + ... + P_dG^d = 0$, then we say that $G$ is algebraic. If~$d$ is the smallest integer such that this is true, then $G$ is algebraic of degree $d$.

Finally, the generalisation of the D-finite definition requires more. A power series $G \in K[[t_1,...,t_n]]$ is D-finite over $K(t_1,...,t_m)$ if for $1 \leq i \leq m$, $G$ satisfies a system of non-trivial partial differential equations of the form
\[
 \sum_{j = 0}^{d_i} P_{j,i} \frac{\partial^j G}{\partial t_i^j} = 0,
\]
where $P_{j,i} \in K[t_1,...,t_m]$ and $d_i$ is the order of the partial differential equation in $t_i$.

\section{Coefficient asymptotics}\label{asympt}

For classes for which an exact counting formula is known, real analysis is usually enough to obtain asymptotic expressions for the counting sequence, as we show with the Catalan numbers.

\begin{ex}\label{stirlingcat}
We show how to get asymptotic expressions from a formula by applying Stirling's approximation to the formula for the Catalan numbers. Recall from real analysis that Stirling's approximation says that
\[
 n! ~ \sim \sqrt{2 \pi n} \left( \frac{n}{e} \right)^n.
\]
We apply this to the formula
\[
 C_n = \frac{1}{n+1} \binom{2n}{n},
\]
while noting that $n+1 \sim n$. So, we get
\begin{eqnarray*}
 C_n 
  &=& \frac{1}{n+1} \binom{2n}{n}, \\
  &\sim& \frac1n \frac{(2n)^{2n} e^{-2n} \sqrt{4\pi n}}{n^{2n} e^{-2n} 2\pi n}, \\
  &\sim& \frac{2^{2n}}{\sqrt{\pi n^3}}.  
\end{eqnarray*}
\end{ex}

Next is the class of problems for which a generating function in a closed form is obtainable, but no closed form expression is available for the counting sequence. It usually suffices to have an expression of the generating function as an analytic function, not a formal power series, to obtain asymptotic expressions. The analysis depends on the local properties of the generating function at its dominant singularities. The exposition of this theory is the focus of this section. To continue, we recall the definition of asymptotic growth.

\begin{defn}
Let $a:\NN \rightarrow \NN$ be a function (such as the coefficient sequence of a generating function associated with a combinatorial class). We say that $a_n$ \textit{grows like} or has \textit{asymptotic growth} $g(n)$, $g: \RR^+ \rightarrow \RR^+$, denoted 
\[
 a(n) \sim g(n)
\]
if for every $0 < \epsilon < 1$ we can find a sufficiently large $N$ such that for $n > N$ the following holds
\[
 1 - \epsilon \leq \frac{a(n)}{g(n)} \leq 1 + \epsilon.
\]
\end{defn}

As we can see in Example \ref{stirlingcat} and for our rational function example, the coefficients $f_n$ of a generating function $F$ belong to a general asymptotic regime. That is
\[
 [t^n] F(t) \sim \beta^n \theta (n),
\]
where $\beta^n$ is an \textit{exponential growth} factor modulated by a \textit{subexponential} factor $\theta(n)$. For Dyck paths, the exponential growth factor is $4$, but recall that we count only even length paths so we may replace $4^n$ by $2^{2n}$, and for the rational function, the coefficients are $2^n$. Recall that in both cases, the radius of convergence is $\frac12$. This (tenuously) suggests a link between the radius of convergence of a generating function and the exponential growth of the coefficient sequence, and this turns out to be the case.

We begin by quoting from \cite{FlSe09} the two principles of coefficient asymptotics.
\begin{framed}
\begin{quote}
 \textbf{The first principle of coefficient asymptotics:} The \textit{location} of a function's singularities dictates the \textit{exponential growth} of its coefficients.
\\
  \textbf{The second principle of coefficient asymptotics:} The \textit{nature} of a function's singularities determines the associated \textit{subexponential factor}.
\end{quote}
\end{framed}

So, rather than viewing the generating functions as formal algebraic structures, we begin to view them as analytic functions of a complex variable. These can in turn be viewed in two equivalent forms: as convergent power series or as differentiable functions. The first form directly relates to the use of generating functions for enumeration.

We assume a basic understanding of complex analysis: the definitions of analytic, holomorphic and meromorphic functions are left to more focused texts \cite{La99}, as well as the definition of a singularity. In order to use information from the singularities in the asymptotic expressions for the coefficients of the generating function, we need to relate the global properties of the function near 0 - the asymptotic expansions - to the local properties of the function away from zero - the singularites.

Cauchy's residue theorem and Cauchy's coefficient formula are both examples of theorems which relate global and local properties of functions through the use of contour integrals, with contours taken around the singularities of a function. In practice, Cauchy's coefficient formula is often used to find bounds on the asymptotic growth of a generating functions coefficient sequence, as in \cite[Theorem 3]{BaFl02}. Given that we are interested in finding exponential growth factors, we need only consider singularities of a generating function which lie at the boundary of the disc of convergence. These are called \textit{dominant singularities}. The following theorem of Pringsheim significantly simplifies the search for these in the case of generating functions. We omit the proof, but it can be found in \cite{FlSe09}.

\begin{thm}
(Pringsheim's theorem). If $f(t)$ is representable at the origin by a convergent series expansion that has non-negative coefficients and a radius of convergence $R$, then the point $t = R$ is a singularity of $f(t)$.
\end{thm}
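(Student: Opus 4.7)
The plan is to argue by contradiction, using the positivity of the coefficients to compare the Taylor expansion of $f$ at a point on the real axis to the power series at the origin.

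First I would assume that $f$ is analytic at $t = R$. By analyticity, there is some $\eps > 0$ such that $f$ extends to an analytic function on the open disc $D(R,\eps)$. Choose a real point $t_0 = R - \delta$ with $0 < \delta < \eps/2$. Then $f$ is analytic on a disc centered at $t_0$ of radius strictly greater than $\delta$ (its distance to the origin); in particular, its Taylor expansion
\[
 f(t) = \sum_{k \geq 0} \frac{f^{(k)}(t_0)}{k!}(t - t_0)^k
\]
converges on some disc around $t_0$ that contains a real point $t_1 > R$.

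The next step is to exploit the non-negativity of the coefficients $a_n$. Term-by-term differentiation inside the original disc of convergence gives
\[
 \frac{f^{(k)}(t_0)}{k!} = \sum_{n \geq k} \binom{n}{k} a_n t_0^{n-k},
\]
and each of these Taylor coefficients at $t_0$ is non-negative because $a_n \geq 0$ and $t_0 > 0$. Substituting this into the Taylor expansion and evaluating at $t_1$ gives a double series of non-negative terms
\[
 f(t_1) = \sum_{k \geq 0} (t_1 - t_0)^k \sum_{n \geq k} \binom{n}{k} a_n t_0^{n-k},
\]
which converges by hypothesis.

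Since all terms are non-negative, Tonelli's theorem for series permits swapping the order of summation, yielding
\[
 f(t_1) = \sum_{n \geq 0} a_n \sum_{k=0}^n \binom{n}{k}(t_1 - t_0)^k t_0^{n-k} = \sum_{n \geq 0} a_n t_1^n
\]
by the binomial theorem. Thus $\sum_n a_n t_1^n$ converges for some real $t_1 > R$, contradicting the assumption that $R$ is the radius of convergence. Hence $t = R$ must be a singularity of $f$.

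The main technical point, rather than a real obstacle, is justifying the interchange of summation; this is the only place where positivity of the coefficients enters in an essential way, and it is handled cleanly by monotone convergence (Tonelli) because every term of the double series is non-negative.
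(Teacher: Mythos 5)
Your argument is correct and is precisely the standard proof of Pringsheim's theorem (via term-by-term re-expansion at an interior real point and Tonelli's rearrangement of the resulting non-negative double series) found in Flajolet--Sedgewick, to which the paper defers instead of supplying a proof. The only slip is the parenthetical ``(its distance to the origin)'': the quantity $\delta = R - t_0$ is the gap between $t_0$ and $R$, not the distance from $t_0$ to $0$; nonetheless, the claim that the Taylor series at $t_0$ has radius of convergence strictly greater than $\delta$ is correct, since for any $\delta'$ with $\delta < \delta' < \eps - \delta$ one has $D(t_0,\delta') \subset D(0,R) \cup D(R,\eps)$, and such a $\delta'$ exists exactly because you chose $\delta < \eps/2$.
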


This tells us that a dominant singularity of a generating function will lie on the real line, and allows us to only consider the real singularities of a generating function when determining the radius of convergence. In order to show the relation between the radius of convergence (or location of the dominant real singularity) and the exponential growth of a counting sequence, we first need a definition.

\begin{defn}
We say that a number sequence $(a_n)$ is of \textit{exponential order} $K^n$ which is denoted as
\[
 a_n \bowtie K^n
\]
if and only if 
\[
  \limsup_{n \rightarrow \infty} |a_n|^{\frac1n} = K.
\]
The relation $a_n \bowtie K^n$ expresses both an upper and lower bound: for any $\epsilon > 0$ we have:
\begin{enumerate}
 \item $|a_n| >_{\mbox{i.o}} (K - \epsilon)^n$, that is, $|a_n|$ exceeds $(K - \epsilon)^n$ \textit{infinitely often};
  \item $|a_n| <_{\mbox{a.e}} (K + \epsilon)^n$, that is, $|a_n|$ is dominated by $(K + \epsilon)^n$ \textit{almost everywhere}.
\end{enumerate}
\end{defn}

Often, our sequences have a subexponential growth factor $\theta(n)$ as well. Note that such a factor would be dominated almost everywhere by an increasing exponential $(1 + \epsilon)^n$, and bounded from below infinitely often by any decaying exponential $(1 - \epsilon)^n$. Thus
\[
 \limsup_{n \rightarrow \infty} |\theta(n)|^{\frac1n} = 1,
\]
and if $a_n = K^n \theta(n)$, it still satisfies $a_n \bowtie K^n$. We are now ready to prove the exponential growth formula.

\begin{thm}\label{expgrow}
\cite[Theorem IV.7]{FlSe09}(Exponential growth formula). If $f(z)$ is a generating function, analytic at 0, with a power series representation involving only non-negative coefficients, and $R$ is the modulus of the dominant real singularity, that is
\[
 R = \sup \{ r \geq 0 ~ | ~ f \mbox{ is analytic at all points of } 0 \leq t < r \},
\]
then the coefficient $f_n = [t^n]f(t)$ satisfies
\[
 f_n \bowtie \left( \frac1R \right)^n.
\]
\end{thm}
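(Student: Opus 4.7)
The plan is to pass through the radius of convergence: identify $R$ with the radius of convergence $\rho$ of the power series $\sum f_n t^n$, and then extract the relation $f_n \bowtie (1/R)^n$ from elementary bounds on the tail of a convergent/divergent power series with non-negative terms.

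First I would argue that $R = \rho$. By hypothesis $f$ is analytic at $0$, so $\rho > 0$, and on the open disc $|t| < \rho$ the function $f$ is analytic, which gives $R \geq \rho$ by definition of $R$. Conversely, because the coefficients are non-negative, Pringsheim's theorem (quoted just above) asserts that the point $t = \rho$ is itself a singularity of $f$, so $R \leq \rho$. Hence $R = \rho$.

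Next I would establish the two halves of the exponential order bound. For the upper bound, fix any $r$ with $0 < r < R$. Since $f$ is analytic on $|t| \leq r$, the series $\sum f_n r^n$ converges, in particular $f_n r^n \to 0$, so there is a constant $M_r$ with $f_n \leq M_r r^{-n}$ for all $n$. Extracting $n$th roots gives $|f_n|^{1/n} \leq M_r^{1/n}/r$, so $\limsup_n |f_n|^{1/n} \leq 1/r$. Letting $r \to R^-$ yields $\limsup_n |f_n|^{1/n} \leq 1/R$, which is exactly the almost-everywhere upper bound $|f_n| <_{\mathrm{a.e.}} (1/R + \epsilon)^n$ for every $\epsilon > 0$. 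For the lower bound, suppose for contradiction that $|f_n| \leq ((1/R) - \epsilon)^n$ for all $n$ sufficiently large. Then $\sum f_n t^n$ converges absolutely whenever $|t| < 1/((1/R) - \epsilon) = R/(1 - \epsilon R)$, a radius strictly larger than $R$. But this contradicts $R = \rho$ being the radius of convergence, so the reverse inequality holds infinitely often: $|f_n| >_{\mathrm{i.o.}} ((1/R) - \epsilon)^n$.

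Combining the two bounds, $\limsup_n |f_n|^{1/n} = 1/R$, which is the defining condition for $f_n \bowtie (1/R)^n$. The main potential obstacle, very mild here, is handling the case $R = +\infty$, where one reads the upper bound as $\limsup |f_n|^{1/n} = 0$ by taking $r$ arbitrarily large, and the lower bound is vacuous; and the case where $f$ is a polynomial, where $R = +\infty$ and $f_n$ is eventually zero, which is consistent with exponential order $0^n$. Once these are addressed, the proof is complete without any further analytic machinery beyond convergence of power series and the already-stated theorem of Pringsheim.
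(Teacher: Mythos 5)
Your proposal is correct and follows essentially the same path as the paper's own proof: identify $R$ with the radius of convergence via Pringsheim, get the almost-everywhere upper bound from convergence of $\sum f_n r^n$ for $r < R$, and get the infinitely-often lower bound by deriving a contradiction with the radius of convergence if the coefficients were eventually dominated by a geometric sequence with ratio smaller than $1/R$. The only cosmetic difference is that the paper shows unboundedness of $f_n(R+\epsilon)^n$ via an intermediate point $R+\epsilon/2$ while you argue directly that the radius of convergence would exceed $\rho$; your treatment of the degenerate $R=\infty$ case is a small addition the paper omits.
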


\begin{proof}
Let $R$ be as in the statement of the theorem. Pringsheim's theorem tells us that $R$ is the radius of convergence. Now, we must prove the exponential growth relation. Since $R$ is the radius of convergence, the power series will converge at any point just to the left of $R$, that is, for $R > \epsilon > 0$
\[
 f_n(R - \epsilon)^n \rightarrow 0.
\]
In other words
\[
 f_n(R - \epsilon)^n < 1
\]
for all $n > N$, given $N$ sufficiently large. This gives
\[
 |f_n|^{\frac1n} < (R - \epsilon)^{-1}
\]
almost everywhere. For property $2$, let $\epsilon > 0$. Then $f_n(R + \epsilon)^n$ cannot be a bounded sequence, otherwise the power series
\[
 \sum_{n \geq 0} f_n (R + \epsilon/2)^n
\]
would converge. Indeed, suppose that $|f_n(R + \epsilon)^n| \leq B$ for some bound $B$. Then 
\[
|f_n|(R + \epsilon/2)^n = |f_n|(R + \epsilon)^n \left( \frac{R + \epsilon/2}{R + \epsilon} \right)^n \rightarrow 0,
\]
since $|f_n|(R + \epsilon)^n$ is bounded and
\[
 \left( \frac{R + \epsilon/2}{R + \epsilon} \right) < 1.
\]
Therefore $f_n^{1/n} > (R + \epsilon)^{-1}$ infinitely often, giving the relation
\[
 f_n \bowtie \left( \frac1R \right)^n.
\]
\end{proof}

This proves the first principle of coefficient asymptotics. The second principle, relating the subexponential growth of the counting sequence to the nature of the singularity, is dependent on the Newton expansion of the generating function near the singularity. We illustrate the previous theorem with some examples.

\begin{ex}
The first example is of lattice paths with the same set of steps as Dyck paths, but no longer restricted to lie in the first quadrant: they can now journey as far south as they would like and end at any place they can reach. We'll call the class of such paths $\cB$. It should be clear that since there are no restrictions, the generating function is 
\[
 B(t) = \sum_{n \geq 0} 2^nt^n = \frac{1}{1 - 2t}.
\]
This clearly has a simple pole at $t = \frac12$, and no other singularities. Applying the exponential growth formula, we find
\[
 b_n = 2^n \theta_B(n),
\]
where $\theta_B(n)$ is a subexponential growth factor. Note that in this case $\theta_B(n) = 1$, since the counting sequence is exactly $2^n$.

The second class is the class of Dyck paths, $\cD$. We already know the generating function:
\[
 D(t) = \sum_{n \geq 0} \frac{1}{n+1} \binom{2n}{n} t^{2n} = \frac{1 - \sqrt{1 - 4t^2}}{2t^2}.
\]
There is an apparent pole at 0, but development of the generating function as a series will show that this is removable. The dominant singularity in fact lies at $t = \frac12$, as in the previous case. This is the branch point of the square root portion of the function: the function $\sqrt{Z}$ cannot be unambiguously defined in a neighbourhood of $Z = 0$. Applying the exponential growth formula gives
\[
 d_{2n} = 2^{2n} \theta_D(2n).
\]
This time, due to the nature of the singularity the subexponential growth factor is a non-trivial function of $n$, which reflects the restriction of the paths to lie in the first quadrant. We already applied Stirling's approximation to the formula for Catalan numbers in Example \ref{stirlingcat}, so we know the asymptotic growth of the function $\theta$ to be
\[
 \theta_D(n) \sim \frac{1}{\sqrt{\pi n^3}}.
\]

The final class we consider is the class of Motzkin paths $\cM$. Like Dyck paths, these are restricted to lie in the first quadrant and begin and end on the $x$-axis, but use the set of steps shown in Figure \ref{motzsteps}.

\begin{figure}[h]
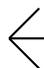

  \begin{center}
  \diagb{NE,E,SE}
  \caption{The steps for Motzkin paths}\label{motzsteps}
  \end{center}
\end{figure}

Like the Dyck paths, we can decompose Motzkin paths. Again, we either have an empty walk, or we take a step. The first step is either horizontal or an upward step. If the first step is horizontal, what follows will be a Motzkin path. If the first step is upwards, we use the first pass decomposition as before. This gives the functional equation satisfied by the generating function
\[
 M(t) = 1 + tM(t) + t^2M(t)^2.
\]
We solve this algebraically to find the algebraic function the generating series converges to inside its radius of convergence:
\[
 M(t) = \frac{1 - t + \sqrt{(1 + t)(1 - 3t)}}{2t^2}.
\]
Since the generating function is analytic at zero, we know that the apparent pole at $t = 0$ is removable. This makes the dominant singularity $t = \frac13$, giving
\[
 M_n = 3^n \theta_M(n).
\]
Again, this time $\theta_M$ is non-constant, and $o(1)$. An application of Stirling's approximation to one of the manifold Motzkin number formulas
\[
 M_n = \sum_{k \geq 0} \binom{n}{2k}C_k,
\]
where $C_k$ is the $k$th Catalan number, shows us that
\[
 \theta_M(n) \sim \frac{3\sqrt3}{2\sqrt{\pi n^3}}.
\]
\end{ex}

The second principle of coefficient asymptotics states that we must look to the nature of the singularities for the subexponential growth of a counting sequence. For many of the cases we consider, a closed form expression for the function is as yet unknown. This makes an application of Stirling's approximation as in Example \ref{stirlingcat} impossible. Therefore, we must turn to the classification of the generating function for asymptotic growth templates. For general algebraic series, Wimp and Zeilberger give us a method in \cite{WiZe85} for finding an asymptotic growth template for the coefficients:
\[
  a_n \sim \kappa\beta^nn^\alpha
\]
where $\beta = \frac1R$ and $\kappa$ are algebraic over $\QQ$ and $\alpha \in \QQ \setminus \{-1,-2,...\}$. That is, a constant, an exponential part and a polynomial part. The more general class of D-finite generating functions also has a template that we may find through the methods of \cite{WiZe85}. This one is more complicated:
\[
 a_n \sim \kappa\beta^ne^{P(n^{(1/r)})}n^{\alpha}(\log(n))^l,
\]
with $l,r \in \NN$, $\kappa,\alpha$ algebraic over $\QQ$ and $P$ a polynomial. 

As shown in \cite{BoKa09}, all the D-finite series we consider are examples of G-series, so as mentioned in the introduction we actually have the simpler growth template
\[
 a_n \sim \kappa \beta^n n^\alpha (\log(n))^\gamma.
\]
The only difference between this and the algebraic case is the logarithmic factor, the exponent of which, $\gamma$, is found to be zero in all D-finite cases that we consider (through methods given in both \cite{WiZe85,FlSe09}). Thus, the growth template for models of interest is the same as the algebraic case.

This is all we say on the subexponential factors, since the exponential factors $\beta$ are what we are more interested in. The reasoning behind this was given in Chapter \ref{statmech}: the exponential growth factors are closely linked to the entropy of the physical system being modeled. This concludes the background information for the thesis. The next section will survey four different lattice path models, beginning with the most simple to analyse in Chapter \ref{undir}.

\part{Planar lattice paths with small steps}\label{plp}
\chapter{Unrestricted walks}\label{undir}

We begin the second part of this thesis by considering the most general case of planar lattice paths with small steps, those completely unrestricted in where they visit. One can quite trivially enumerate this model for any step set, but we wish to pedagogically expose some of the developed machinery at work on something within the grasp of any reader.

Section \ref{udef} will introduce the class of unrestricted planar lattice paths, giving the ordinary and multivariate generating functions and producing the functional equation satisfied by them from a symbolic specification. The functional equation is quite easy to solve, and makes classification in Section \ref{uclass} simple. Finally, we exactly and asymptotically enumerate the class in Section \ref{uenum}.

\section{Definition}\label{udef}

Let $\kS \subseteq \bar{\kS}$ be a set of small steps, and consider lattice paths with steps from $\kS$, beginning at the origin and unrestricted in the regions of the plane that they may visit. This is the class~$\cW_\kS$ of unrestricted lattice paths on $\kS$. A typical example was shown in Figure \ref{walkex}. Taking $w(n)$ to be the number of walks in $\cW_\kS$ of length $n$, we define the ordinary length generating function as
\[
 W_\kS(t) = \sum_{n \geq 0} w(n)t^n.
\]

We can also keep track of the endpoint of a walk. If we let $w(i,j;n)$ be the number of walks of length $n$ ending at the point $(i,j)$, then we can define the MGF for $\cW_\kS$ as
\[
 W_\kS(x,y;t) = \sum_{\substack{(i,j) \in \ZZ^2 \\ n \geq 0}} w(i,j;n)x^iy^jt^n.
\]
Here, $x$ and $y$ mark the respective coordinates of the endpoint. Note that $W(t) \equiv W_\kS(1,1;t)$.

Since this class is completely unrestricted, the recursive structure of the walks is obvious: all walks of length $n+1$ can be created by appending a step from $\kS$ to the end of a walk of length $n$. We can translate this into a specification, using the operations of combinatorial sum and product:
\[
 \cW_\kS = \cE + \kS \cdot \cZ \cdot \cW_\kS,
\]
where the summation is combinatorial in nature. In the next section, we will translate this into the ring of formal power series and classify the generating function. In this ring, the product of the step set and a class will translate into the product of the inventory and the generating function for that class. 

\section{Classification}\label{uclass}

The MGF (and hence the OGF) for $\cW_\kS$ is easily shown to be a rational power series. Using the techniques from Chapter \ref{ac}, the specification given for $\cW_\kS$ in Section \ref{udef} is translated into a multivariate functional equation, translating a combinatorial sum and product into the operations of sum and Cauchy product in the ring of formal power series. This gives
\[
 W_\kS(x,y;t) = 1 + S(x,y) \cdot t \cdot W_\kS(x,y;t),
\]
which we may solve algebraically to find the rational function
\[
 W_\kS(x,y;t) = \frac{1}{1 - S(x,y)t}
\]
which the MGF will converge to inside its radius of convergence. Setting $x=y=1$ gives 
\[
 W_\kS(t) = \frac{1}{1 - |\kS|t}
\]
as the rational function for the OGF.

\begin{ex}
Let $\kS = \diagr{N,SW,SE}$. Then $S(x,y) = y + \frac{1}{xy} + \frac{x}{y}$, and the functional equation satisfied by $W_\kS(x,y;t)$ is
\[
 W_\kS(x,y;t) = 1 + \left(ty + \frac{t}{xy} + \frac{tx}{y} \right)W_\kS(x,y;t),
\]
which gives the rational MGF
\[
 W_\kS(x,y;t) = \frac{1}{1 - \left(ty + \frac{t}{xy} + \frac{tx}{y} \right)}
\]
and rational OGF
\[
 W_\kS(t) = \frac{1}{1 - 3t}.
\]
\end{ex}

\section{Enumeration}\label{uenum}

Note that we can count these directly using the same observation that gave us the recurrence in Section \ref{uclass}: at each step we have $|\kS|$ choices for the next step, so the number of walks is
\[
 w(n) = |\kS|^n.
\]
This allows us to define the OGF immediately as the power series
\[
 W_\kS(t) = \sum_{n \geq 0} |\kS|^nt^n,
\]
which is a geometric series, so has sum
\[
 W_\kS(t) = \frac{1}{1 - |\kS|t}
\]
for $t < \frac{1}{|\kS|}$. Incidentally, this is the radius of convergence of the power series $W_\kS(t)$, and Section \ref{asympt} gives us a method for finding the asymptotic growth of the sequence $w(n)$ using this radius of convergence. But since rational generating functions have geometric power series and thus their coefficient sequences grow purely exponentially, the asymptotic expression is the same as the exact expression.

\chapter{Directed walks}\label{dir}

Directed paths are those in which all steps have a privileged direction of increase. In this instance, we take all steps to be $x$-positive. Since this direction of increase forces self avoidance, this makes these paths naturally suitable for modeling of linear polymers subject to a force, as discussed in Chapter \ref{statmech}. We primarily use them in our analysis of quarter plane walks in Chapter \ref{qpwalks}. 

In this chapter, we discuss results from Banderier and Flajolet's paper \cite{BaFl02} on directed paths which stay above the $x$-axis, known as \textit{directed meanders}. The introduction of a boundary affects the enumerative results for walks on some step sets. This is due to a parameter known as the drift of the walk, which we discuss in Section \ref{direnum}. A common example of a directed meander, known as a Dyck prefix, is shown in Figure \ref{dirmfig}.

\begin{figure}[h]
  \begin{center}
    \begin{tikzpicture}[scale = 0.8]
      \draw[->,thick,black] (0,0) -- (10,0);
      \draw[->,thick,black] (0,0) -- (0,6);
      \draw[-,dotted,black] (0,1) -- (10,1);
      \draw[-,dotted,black] (0,2) -- (10,2);
      \draw[-,dotted,black] (0,3) -- (10,3);
      \draw[-,dotted,black] (0,4) -- (10,4);
      \draw[-,dotted,black] (0,5) -- (10,5);
      \draw[-,dotted,black] (0,6) -- (10,6);
      \draw[-,dotted,black] (1,0) -- (1,6);
      \draw[-,dotted,black] (2,0) -- (2,6);
      \draw[-,dotted,black] (3,0) -- (3,6);
      \draw[-,dotted,black] (4,0) -- (4,6);
      \draw[-,dotted,black] (5,0) -- (5,6);
      \draw[-,dotted,black] (6,0) -- (6,6);
      \draw[-,dotted,black] (7,0) -- (7,6);
      \draw[-,dotted,black] (8,0) -- (8,6);
      \draw[-,dotted,black] (9,0) -- (9,6);
      \draw[-,dotted,black] (10,0) -- (10,6);
      \draw[->,very thick,red] (0,0) -- (1,1);
      \draw[->,very thick,red] (1,1) -- (2,2);
      \draw[->,very thick,red] (2,2) -- (3,1);
      \draw[->,very thick,red] (3,1) -- (4,2);
      \draw[->,very thick,red] (4,2) -- (5,3);
      \draw[->,very thick,red] (5,3) -- (6,4);
      \draw[->,very thick,red] (6,4) -- (7,5);
      \draw[->,very thick,red] (7,5) -- (8,6);
      \draw[->,very thick,red] (8,6) -- (9,5);
      \draw[->,very thick,red] (9,5) -- (10,4);
    \end{tikzpicture}
    \caption{A Dyck prefix of length 10 on $\kS = \{ \textbf{NE,~SE}\}$.}\label{dirmfig}
  \end{center}
\end{figure}

First, a brief note about the step sets used in this chapter. In \cite{BaFl02}, walks are not restricted to just small steps. Steps may in fact have any $y$-coordinate as long as they step one unit to the right. Since the quarter plane models considered in Chapter \ref{qpwalks} are all on sets of small steps, we make the restriction here also. As in the other models, restricting to small steps significantly simplifies the computations necessary (for example, compare Theorem \ref{dmgf} here and Theorem 2 in~\cite{BaFl02}). Future work on these problems include a generalisation of the results to larger steps, see Chapter \ref{conclusion} for a brief discussion.

\section{Definition}\label{ddef}

We call the class of directed meanders $\cF$, as in \cite{BaFl02}. In order to modify our definition from Chapter \ref{def}, we add the restrictions that each step has an $x$-value of 1 and that each point visited by a path lies in the half plane $y \geq 0$. For walks with small steps, this shrinks the largest step set to that shown in Figure \ref{dirss}.

\begin{figure}[h]
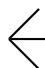

 \begin{center}
  \diagb{NE,E,SE}
 \end{center}
\caption{The set of directed, small steps}\label{dirss}
\end{figure}

For a given set of directed small steps $\kS$, the inventory $S(x,y)$ contains superfluous information: we know that the $x$ value of every step is $1$. Thus we may omit the $x$-coordinate information. In light of this, we define a new \textit{directed inventory}
\[
 P(y) := S(1,y).
\]

\begin{ex}
Let $\kS$ be the full set of directed small steps shown in Figure \ref{dirss}. Then $\kS$ has the inventory $S(x,y) = xy + x + x/y$ and directed inventory $P(y) = y + 1 + 1/y$.
\end{ex}

For a given set of directed small steps $\kS$, let $f(j;n)$ be the number of directed meanders with endpoints having $y$-coordinate, or final altitude, $j$ and length $n$. Then the MGF for directed meanders on $\kS$ is
\[
 F_\kS(y;t) = \sum_{j,n \geq 0} f(j;n)y^jt^n.
\]
Again, these walks can be built recursively: we create directed meanders of length $n+1$ by appending steps from $\kS$ in all possible ways to meanders of length $n$. The phrase `in all possible ways' is a little vague: what we mean by this is that if a walk of length $n$ has its endpoint on the $x$-axis, we may not create a walk of length $n+1$ by appending a $y$-negative step. This last point is difficult to reflect symbolically in terms of combinatorial sum and product, so we immediately define a functional equation based on the recursion:
\begin{equation}\label{fedm}
 F_\kS(y;t) = 1 + tP(y)F_\kS(y;t) - \{y^{<0}\}\left[P(y)\right]\cdot tF_\kS(0;t).
\end{equation}
Note that the final term represents removing the `impossible ways' to increase the length of a shorter walk. Rearrangement of the functional equation gives the expression
\begin{equation}\label{bgfalg}
 F_\kS(y;t) = \frac{1 - \{y^{<0}\}tP(y)F_\kS(0;t)}{1 - tP(y)}.
\end{equation}

\begin{ex}\label{dpfe}
Take $\kS = \{\textbf{NE,~SE}\}$, the steps of a Dyck prefix. Then the directed inventory of $\kS$ is
\[
 P(y) = y + \frac1y,
\]
and the functional equation satisfied by $F_\kS(y;t)$, the BGF for the class of Dyck prefixes, is
\[
 F_\kS(y;t) = 1 + t\left(y + \frac1y\right) F_\kS(y;t) - \frac{t}{y}F_\kS(0;t).
\]
As in Equation (\ref{bgfalg}), we may rearrange the functional equation to find the expression
\[
 F_\kS(y;t) = \frac{1 - \frac{t}{y}F_\kS(0;t)}{1 - t\left(y + \frac1y \right)}.
\]
\end{ex}

The expression (\ref{bgfalg}) is for the MGF of directed meanders on a step set $\kS$ in terms of the OGF for meanders ending on the $x$-axis. Unless we know the classification of $F_\kS(0;t)$, we are unable to immediately classify $F_\kS(y;t)$ as we could for the whole plane model. In Section \ref{dclass} we shall classify the generating functions for directed meanders with small steps by solving the functional equation explicitly through the kernel method.

\section{Classification}\label{dclass}
We continue with expression (\ref{bgfalg})
\[
 F_\kS(y;t) = \frac{1 - \{y^{<0}\}tP(y)F_\kS(0;t)}{1 - tP(y)}
\]
for the MGF of directed meanders. The quantity $1 - tP(y)$ found in the denominator of the above expression is important enough to warrant a name.

\begin{defn}
Let $\cF_\kS$ be a class of directed meanders taken on a set of directed small steps~$\kS$ with directed inventory $P(y)$. The quantity
\[
 1 - tP(y)
\]
found by rearrangement of Equation (\ref{fedm}) is called the \textit{kernel}. We define the \textit{kernel equation} to be
\[
 y - tyP(y) = 0,
\]
where the left hand side of the kernel equation is the kernel made entire.
\end{defn}

We work with directed small steps so the kernel equation is at most quadratic in $y$. Solving by the quadratic formula will yield two solutions, both of which are algebraic functions of $t$. When developed as power series, one shall be convergent at zero and have only non-negative integer coefficients and the other will be singular at 0. Denote the solution with non-negative integer coefficients as $y_0(t)$. This function plays a significant role in the solution of Equation (\ref{fedm}).

\begin{ex}\label{dyckpref}
We continue Example \ref{dpfe}, which showed us that the functional equation is
\[
 F_\kS(y;t) = 1 + t(y + 1/y)F_\kS(y;t) - t/yF_\kS(0;t),
\]
and rearrangment will give
\[
 F_\kS(y;t) = \frac{1 - t/yF_\kS(0;t)}{1 - t(y + 1/y)}.
\]
We make the kernel entire, and set it equal to zero to get the kernel equation
\[
 y - ty^2 - t = 0,
\]
which we may solve using the quadratic formula:
\[
 y = \frac{-1 \pm \sqrt{1 - 4(-t)(-t)}}{2(-t)} = \frac{1 \pm \sqrt{1 - 4t^2}}{2t}.
\]
Development of $\sqrt{1 - 4t^2}$ will show that
\[
 y(t) = \frac{1 + \sqrt{1 - 4t^2}}{2t}
\]
is not convergent at $t=0$, so
\[
 y_0(t) = \frac{1 - \sqrt{1 - 4t^2}}{2t}
\]
is the desired solution to the kernel equation.
\end{ex}

We now move on to a proof that for all directed meanders taken on small steps, the generating function is algebraic. In the process, we obtain a formula for $F_\kS(y;t)$ in terms of the kernel and the power series $y_0(t)$.

\begin{thm}\label{dmgf}
\cite[Theorem 2]{BaFl02} For a set of directed, small steps $\kS$, the BGF of meanders (with $t$ marking size and $y$ marking final altitude) relative to $\kS$ is algebraic. It is given in terms of the function $y_0(t)$, the solution to the kernel equation, by
\[
 F_\kS(y;t) = \frac{y - y_0(t)}{y(1 - tP(y))}.
\]
The counting generating function is found by taking $y = 1$, giving
\[
 F_\kS(1;t) \equiv F_\kS(t) = \frac{1 - y_0(t)}{1 - tP(1)}.
\]
\end{thm}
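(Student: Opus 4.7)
The plan is to use the kernel method on the functional equation (\ref{fedm}). The first move is to multiply the functional equation by $y$ and rearrange, so as to clear the negative powers of $y$ and expose a polynomial kernel. Since $\kS$ consists of directed small steps, the only possible $y$-negative step is $\mathbf{SE}$, contributing $1/y$ to $P(y)$. Writing $c = [y^{-1}]P(y) \in \{0,1\}$, we have $\{y^{<0}\}P(y) = c/y$, so multiplying (\ref{fedm}) by $y$ yields
\[
y\bigl(1 - tP(y)\bigr)F_\kS(y;t) = y - c\,t\,F_\kS(0;t).
\]
Both sides are now polynomials in $y$ whose coefficients are formal power series in $t$.

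Next I would invoke the kernel method: substitute $y = y_0(t)$, the branch of the kernel equation $y - tyP(y) = 0$ which is a power series in $t$ with non-negative integer coefficients (as singled out before Example \ref{dyckpref}). At this substitution the left-hand side vanishes identically, leaving the scalar identity
\[
y_0(t) = c\,t\,F_\kS(0;t).
\]
If $c = 1$ this determines $F_\kS(0;t) = y_0(t)/t$; if $c = 0$ the kernel equation forces $y_0(t) = 0$, which is also consistent (in that case there is no $y$-negative step and the boundary term is absent altogether).

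Feeding this identity back into the multiplied functional equation, the right-hand side becomes $y - y_0(t)$, and I can solve directly for the generating function:
\[
F_\kS(y;t) = \frac{y - y_0(t)}{y\bigl(1 - tP(y)\bigr)}.
\]
The stated one-variable formula follows by specialising $y = 1$, whereupon $P(1) = |\kS|$ appears in the denominator as in Theorem \ref{dmgf}. Algebraicity is then automatic: $y_0(t)$ is algebraic as a root of the quadratic kernel equation, and algebraic functions form a field.

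The main obstacle, and the point that deserves the most care, is justifying the substitution $y = y_0(t)$ as an operation in the ring of formal power series $\QQ[y][[t]]$ where $F_\kS(y;t)$ lives. This is legitimate precisely because $y_0(t)$ has positive $t$-valuation (when $c = 1$ one checks from the quadratic formula that $y_0(t) = t + O(t^3)$, and when $c = 0$ the root is identically zero); this guarantees that substituting $y_0(t)$ into the polynomial-in-$y$ quantity $y(1 - tP(y))F_\kS(y;t)$ produces a well-defined power series in $t$, with only finitely many terms contributing to each coefficient. Once this formal legitimacy is in hand, the remainder of the derivation is routine algebra.
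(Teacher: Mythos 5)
Your proof is correct and follows essentially the same kernel-method argument as the paper: clear the negative power of $y$ by multiplying by $y$, substitute the small kernel root $y_0(t)$ so the left side vanishes, solve the resulting scalar identity for $F_\kS(0;t)$, and substitute back into the multiplied functional equation. The one substantive difference is the justification of the substitution: you argue it formally, noting that $y_0(t)$ has positive $t$-valuation so that the substitution is well-defined in $\QQ[y][\![t]\!]$, whereas the paper instead restricts $t$ to a real interval on which $|y_0(t)| < 1$ so that $F_\kS(y;t)$ converges analytically at $y = y_0(t)$; your formal version is arguably cleaner since it never leaves the formal power series ring. (A trivial slip: when $c = 1$ the expansion $y_0(t) = t + O(t^3)$ only holds if $\mathbf{E} \notin \kS$; in general $y_0(t) = t + bt^2 + O(t^3)$ where $b = [y^0]P(y)$, as one sees in the Motzkin case. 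But the $t$-valuation is $1$ in every case with $c=1$, which is all your argument requires.)
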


\begin{proof}
The fundamental equation in the form of Equation (\ref{fedm}) looks underdetermined. It involves two unknowns: the bivariate $F_\kS(y;t)$ and the univariate $F_\kS(0;t)$. The main idea of the kernel method is to bind $t$ and $y$ in such a way that the left hand side vanishes.

First, we remove the negative exponents of $y$ from Equation (\ref{fedm}):
\[
 yF_\kS(y;t) = y + tyP(y)F_\kS(y;t) - y \cdot (\{y^{<0}\}P(y))F_\kS(0;t).
\]
Since $P(y)$ has a negative degree of 1, $(\{y^{<0}\}P(y))$ is simply a Laurent monomial in $y$. Denote it by $r(y)$. We remove the pole at $y_0(t)$ by multiplying both sides by the kernel:
\[
 F_\kS(y;t)(y - tyP(y)) = y - ty r(y)F_\kS(0;t).
\]
We want to force the left hand side to vanish. By restricting $t$ to a small enough neighbourhood of the origin contained in $|t|<1/P(1)$, we can ensure that $|y_0(t)| < 1$ (recall $y_0$ has non-negative coefficients), ensuring that $F_\kS(y;t)$ converges upon substitution.

Thus, the left hand side of the equation vanishes providing us with an equation in a single unknown, the function $F_\kS(0;t)$:
\[
 0 = y_0(t) -  ty_0(t) \cdot r(y_0)F_\kS(0;t).
\]
Thus $F_\kS(0;t)$ is an algebraic function expressible rationally in terms of the algebraic function~$y_0(t)$.

Next, we make use of an observation of Mireille Bousquet-M\'{e}lou \cite{BoPe00}. Define
\[
 N(t,y) := y - t y r(y) F_\kS(0;t).
\]
This is by the substitution in the previous paragraph a linear polynomial in $y$ with its root at the small branch, $y_0(t)$. Thus, we can simply write it as
\[
 N(t,y) = y - y_0(t).
\]
Thus, the constant term is simultaneously $-y_0(t)$ and $-tyr(y)F_\kS(0;t)$, which we may solve for~$F_\kS(0;t)$. Thus, $N(t,y)$ is fully determined in terms of the small branch, and we get the MGF for meanders from the fundamental functional equation made entire:
\[
 F_\kS(y;t) = \frac{N(t,y)}{y(1 - tP(y))} = \frac{y - y_0(t)}{y(1 - tP(y))}.
\]
This gives the OGF of meanders
\[
 F_\kS(t) = \frac{1 - y_0(t)}{1 - tP(1)}.
\]
Since the algebraic functions form a field, and $N(t,y)$, $P(y)$ and $y_0(t)$ are algebraic functions, both the MGF and OGF for directed meanders are algebraic.
\end{proof}

\begin{ex}\label{dpgf}
We may apply Theorem \ref{dmgf} to Example \ref{dyckpref} to produce a generating function for the class. Recall that the directed inventory for the step set is $P(y) = y + \frac1y$ and the solution to the kernel equation is
\[
 y_0(t) = \frac{1 - \sqrt{1 - 4t^2}}{2t}.
\]
Theorem \ref{dmgf} tells us that the bivariate generating function for the class of Dyck prefixes is
\[
 F_\kS(y;t) = \frac{2ty - 1 + \sqrt{1 - 4t^2}}{y(1 - t(y + \frac1y))2t},
\]
and the OGF is
\[
 F_\kS(t) = \frac{2t - 1 + \sqrt{1 - 4t^2}}{(1 - 2t)2t}.
\]
\end{ex}

This concludes the classification of the generating functions associated with directed meanders. In the next section, we turn to the enumeration of these objects, both exact and asymptotic.

\section{Enumeration}\label{direnum}

Now that we have a method for finding the generating functions associated with directed meanders, enumeration seems an easy task. For closed coefficient formulae, we can simply apply the generalised binomial theorem and other coefficient extraction techniques to the generating functions given by Theorem \ref{dmgf}. We would also like a corresponding theorem about the asymptotic behaviour of the coefficient sequences. As is well understood, the asymptotic behaviour of counts is closely related to the singular properties of the corresponding generating functions \cite{FlSe09}. The application of the kernel method in Theorem~\ref{dmgf} gives a factorisation of the generating function under which the singular forms become manageable. This uses a small amount of analysis in the vicinity of what is known as the structural radius, $\rho$.

\begin{lem}
\cite[Lemma 2]{BaFl02} Let $P(u)$ be the polynomial inventory of a simple step set $\kS$. Then there exists a unique number $\tau$, called the \textit{structural constant}, such that
\[
 P'(\tau) = 0, \hspace{1em} \tau > 0.
\]
The \textit{structural radius} is by definition the quantity
\[
 \rho := \frac{1}{P(\tau)}.
\]
\end{lem}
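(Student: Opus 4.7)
The plan is to exploit the very restricted shape of $P(y)$ when $\kS$ is a set of directed small steps. Since $\kS \subseteq \{\textbf{NE},\textbf{E},\textbf{SE}\}$, the directed inventory has the form
\[
 P(y) = a\,y + b + \frac{c}{y},
\]
where $a$, $b$, $c$ are the (non-negative, possibly weighted) multiplicities of \textbf{NE}, \textbf{E}, and \textbf{SE} respectively. The hypothesis that $\kS$ is \emph{simple} should be interpreted as requiring at least one $y$-positive and one $y$-negative step, i.e.\ $a>0$ and $c>0$; otherwise $P'$ is sign-definite on $(0,\infty)$ and no positive critical point can exist, so this assumption is both natural and necessary.

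Under this assumption I would give two short arguments and pick whichever reads best. The direct calculation is one line: $P'(y)=a-c/y^2$, so $P'(\tau)=0$ with $\tau>0$ forces $\tau=\sqrt{c/a}$, which is a well-defined positive real and is evidently unique. For the more conceptual route, I would note that $P''(y)=2c/y^3>0$ for $y>0$, so $P'$ is strictly increasing on the positive reals; since $\lim_{y\to 0^+}P'(y)=-\infty$ and $\lim_{y\to\infty}P'(y)=a>0$, the intermediate value theorem produces a unique positive zero.

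I do not anticipate any real obstacle here; the only subtle point is pinning down the meaning of ``simple'' so that both $a>0$ and $c>0$, which is precisely what guarantees that $P$ is a strictly convex Laurent polynomial on $(0,\infty)$ with both limits $\lim_{y\to 0^+}P(y)=\lim_{y\to\infty}P(y)=+\infty$. The structural radius $\rho=1/P(\tau)$ is then just the reciprocal of the minimum of $P$ on $(0,\infty)$, which foreshadows its later role as the dominant real singularity of the generating function for directed meanders.
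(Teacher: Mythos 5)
Your ``conceptual route'' is essentially the paper's own proof: both argue from $P''>0$ (strict convexity on $(0,\infty)$) to a unique positive minimum. The only cosmetic difference is that the paper invokes the limits $P(0^+)=P(+\infty)=+\infty$ directly, whereas you invoke the limits of $P'$ and the intermediate value theorem; these are interchangeable once convexity is in hand. Your one-line direct calculation $\tau=\sqrt{c/a}$ is a genuinely more elementary route available in the small-steps setting (it bypasses convexity entirely), and it would not carry over to the general simple walks of Banderier--Flajolet where $P$ has arbitrary Laurent degree; the paper's convexity argument is phrased so that it does. You are also right to flag that the hypothesis must entail $a>0$ and $c>0$: the paper leaves this implicit in writing $P(0)=P(+\infty)=+\infty$, which silently presumes $P$ has both a positive- and a negative-degree term, so making that assumption explicit is a useful clarification rather than a deviation.
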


\begin{proof}
Differentiating $P$ twice, notice that $P''(x) > 0$ for all $x > 0$, so viewed as a real function, $P(x)$ is strictly convex. Since it satisfies $P(0) = P(+\infty)=+\infty$, there must be a unique positive minimum attained at some $\tau$, thus $P'(\tau) = 0$.
\end{proof}

Finally, we can get to the asymptotic results for directed meanders. There are three cases, distinguished by a quantity known as the drift.

\begin{defn}
\cite[Definition 5]{BaFl02} Given a simple walk with polynomial inventory $P(y)$, the \textit{drift} is the quantity
\[
 \delta = P'(1).
\]
\end{defn}

In the combinatorial case, with or without weighted steps (see Chapter \ref{def} for how to weight a step set), the drift is then the sum of all the $y$-values of the steps multiplied by the weight of each step, which is an indicator of the `tendency' of the walk to increase or decrease in altitude. For a probabilistic case, the drift is the expected movement in the $y$ direction of any single step. Note that for a symmetric step set, the drift is $\delta = 0$ and the structural constant is $\tau = 1$.

\begin{thm}\label{dirasympt}
\cite[Theorem 4]{BaFl02} Let $\kS$ be a simple set of small steps, with polynomial inventory $P$. The asymptotic number of meanders taken on $\kS$ depends on the sign of the drift $\delta = P'(1)$ as follows.

\[
\begin{array}{r c c l}
  \delta = 0 : & [t^n]F_\kS(1;t) & \sim & \nu_0 \frac{P(1)^n}{\sqrt{\pi n}} \left( 1 + \frac{c_1}{n} + \frac{c_2}{n^2} + \cdots \right) \\
  &&&\nu_0 := \sqrt{2 \frac{P(1)}{P''(1)}}; \\
  &&&\\
  \delta < 0 : & [t^n]F_\kS(1;t) & \sim & \nu_0^{\pm} \frac{P(\tau)^n}{2\sqrt{\pi n^3}} \left( 1 + \frac{c'_1}{n} + \frac{c'_2}{n^2} + \cdots \right) \\
  &&&\nu_0^{\pm} := -\sqrt{2 \frac{P(\tau)^3}{P''(\tau)}}\frac{1}{P(\tau) - P(1)}; \\
  &&&\\
  \delta > 0 : & [t^n]F_\kS(1;t) & \sim & \xi_0 P(1)^n + \nu_0^{\pm} \frac{P(\tau)^n}{2\sqrt{\pi n^3}} \left( 1 + \frac{c''_1}{n} + \frac{c''_2}{n^2} + \cdots \right) \\
  &&&\xi_0 := 1 - y_0(\rho_1), \hspace{1em} \rho_1 = P(1)^{-1}.
\end{array}
\]
\end{thm}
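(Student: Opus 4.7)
The plan is to apply the singularity analysis machinery of Section \ref{asympt} (in the form of transfer theorems from \cite{FlSe09}) to the closed form
\[
 F_\kS(t) = \frac{1 - y_0(t)}{1 - tP(1)}
\]
supplied by Theorem \ref{dmgf}. The numerator and denominator are each singular for only one reason: $1 - tP(1)$ has a simple zero at $\rho_1 = 1/P(1)$, while $y_0(t)$, being the small branch of the quadratic kernel equation $-tay^2 + (1-tb)y - tc = 0$, has a square-root branch point at the unique real $\rho > 0$ where the discriminant vanishes. Since $P$ is strictly convex on $\RR^+$ with unique minimum at $\tau$, one identifies $\rho = 1/P(\tau)$, and the convexity yields $\rho \geq \rho_1$ with equality iff $\tau = 1$, i.e.\ iff $\delta = 0$.

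Next I would record the two local expansions that drive everything. Writing the defining relation $tP(y_0) = 1$ and expanding $P$ around its minimum using $P'(\tau) = 0$ gives
\[
 y_0(t) = \tau - \sqrt{\tfrac{2P(\tau)}{P''(\tau)}}\,\sqrt{1 - t/\rho} + O(1 - t/\rho)
\]
as $t \to \rho^-$, where the minus sign is forced by $y_0(0) = 0$ together with monotonicity of the small branch. The companion algebraic fact I will need is that when $\tau \geq 1$, the small branch passes through $y = 1$ at exactly $t = \rho_1$ (since $P(y) = P(1)$ on $(0,\tau]$ forces $y = 1$), whereas when $\tau < 1$, the small branch never reaches $1$ and one only has $y_0(\rho_1) \in (0, \tau) \subset (0,1)$. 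This is the pivot that decides whether the apparent pole at $\rho_1$ cancels.

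With these two tools in hand the three cases fall out cleanly. For $\delta < 0$ we have $\tau > 1$, $\rho > \rho_1$, and $y_0(\rho_1) = 1$, so the factor $(1 - y_0(t))$ kills the pole at $\rho_1$ and the only remaining singularity on the disc of radius $\rho$ is the branch point at $\rho$. Substituting the expansion of $y_0$ and noting that $1 - tP(1)$ is nonzero and analytic at $\rho$ produces a singular part of the form $\kappa\,\sqrt{1 - t/\rho}$; the transfer theorem $[t^n](1-t/\rho)^{1/2} \sim -\rho^{-n}/(2\sqrt{\pi n^3})$ then delivers the announced $\nu_0^\pm P(\tau)^n/(2\sqrt{\pi n^3})$ after the elementary simplification $1 - \rho P(1) = (P(\tau) - P(1))/P(\tau)$. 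For $\delta > 0$ we have $\tau < 1$, so $y_0(\rho_1) \neq 1$ and $\rho_1 < \rho$ is a genuine simple pole contributing $\xi_0 P(1)^n$ with $\xi_0 = 1 - y_0(\rho_1)$; the branch point at $\rho$ is subdominant and contributes the same $P(\tau)^n/(2\sqrt{\pi n^3})$-style correction by the same expansion. The case $\delta = 0$ is the delicate one: $\rho = \rho_1$, $\tau = 1$, and the square-root zero of the numerator only partially cancels the simple pole of the denominator. Plugging the expansion of $y_0$ at $\tau = 1$ into $F_\kS$ and simplifying yields a singular part $\sqrt{2P(1)/P''(1)}\,(1 - t/\rho)^{-1/2}$, and the transfer rule $[t^n](1-t/\rho)^{-1/2} \sim \rho^{-n}/\sqrt{\pi n}$ gives the $\nu_0 P(1)^n/\sqrt{\pi n}$ asymptotic. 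Full expansions of the subsequent terms in the Puiseux series of $y_0$ around $\rho$ (together with the geometric series for $1/(1-tP(1))$ when it is analytic at $\rho$) produce the full asymptotic series with coefficients $c_i$, $c_i'$, $c_i''$.

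The main obstacle is the $\delta = 0$ case, where the two dominant singularities collide and one must verify that the leading algebraic cancellation between $(1 - y_0(t))$ and $(1 - tP(1))$ is of exactly the right order for the singularity-analysis machinery to apply; in particular one needs that $(1 - tP(1))$ contributes a strict simple zero and that the Puiseux expansion of $y_0$ is genuinely of square-root type there (guaranteed by $P''(\tau) > 0$). The bookkeeping to match the constants $\nu_0^\pm$ and $\xi_0$ exactly as stated, and to verify the standard analytic-continuation hypotheses of the transfer theorem (i.e.\ that $F_\kS$ continues to a $\Delta$-domain at its dominant singularity), is routine but must be done with care.
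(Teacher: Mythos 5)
Your proposal is correct and follows essentially the same route as the paper: isolate the pole at $\rho_1 = 1/P(1)$ and the square-root branch point at $\rho = 1/P(\tau)$ in the factorisation $F_\kS(t) = (1-y_0(t))/(1-tP(1))$, use the Puiseux expansion of $y_0$ at $\rho$ (driven by $P'(\tau)=0$, $P''(\tau)>0$), and split into cases by the sign of the drift, with the pivot being whether $y_0(\rho_1)=1$ cancels the pole. Your write-up is in fact somewhat more explicit than the paper's sketch, in particular in verifying the constants $\nu_0$, $\nu_0^\pm$, $\xi_0$ and in stating the injectivity argument on $(0,\tau]$ that decides the cancellation.
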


We can view these formulae intuitively. For the positive drift cases, a fraction close to $\xi_0$ of the unconstrained walks is a meander, which makes sense since the walks have a natural tendency to gain in altitude. For negative drift, most paths tend to head down, and so the fraction of unconstrained walks which are meanders is exponentially small, roughly~$(P(\tau)/P(1))^n$. For zero drift, the proportion of walks which are meanders is as large as $1/\sqrt{n}$, with the walks tending to oscillate near the horizontal axis.

\begin{proof}
The discussion is based on an analysis of the generating function $F_\kS(1;t)$, in its factorisation
\[
 F_\kS(1;t) = \frac{1 - y_0(t)}{1 - tP(1)}
\]
provided by Theorem \ref{dmgf}. The method of proof examines the locations of the singularities and zeroes of the numerator in relation to the zero $1/P(1)$ of the denominator.

In the case that $\delta = 0$, we have $P'(1) = 0$, $\tau = 1$ and $\rho = 1/P(1)$. We must do some analysis of the solution to the kernel equation $y_0(t)$ in the vicinity of $\rho$. First, the kernel equation can be put into the form
\[
 t = \frac{1}{P(y)}.
\]
Since the inverse of an analytic function at a point where the derivative is nonzero is analytic, the above relation is invertible analytically in the neighbourhood of any point $t$ such that $P'(t) \neq 0$. The contrapositive then must hold: a singularity will occur at any value $\zeta$ such that $P'(\zeta) = 0$.

When $\delta = 0$, $P'(1) = 0$ by construction while $P''(1) > 0$. Thus the local form of $t = 1/P(y)$ reads
\[
 t = \frac1{P(1)} - \frac12 P''(1)(y - 1)^2 + O((y - 1)^3).
\]
This is readily inverted, leading to two local solutions
\[
 y(t) = 1 \pm \sqrt{2\frac{P(1)}{P''(1)}} \sqrt{1 - t/\rho} + ... \hspace{1em} t \rightarrow \rho^-.
\]
Therefore, $1 - y_0(t)$ contributes a term of the form $(1 - t/\rho)^{1/2}$ at $t = \rho$, while the denominator $(1 - tP(1))$ has a simple zero there. Globally, the singularity of $F_\kS(1;t)$ is of the square root type, giving the result.

For a negative drift meander, we have $P'(1) < 0$, so we must have $\tau >1$ since $P'(u)$ increases from $-\infty$ to $+\infty$ when $y$ ranges from $0^+$ to $+\infty$. With $\rho = 1/P(\tau)$ (the structural radius) and $\rho_1 = 1/P(1)$, we have $\rho_1 < \rho$. In this case, the prefactor $(1 - tP(1))^{-1}$ has a pole at $\rho_1$. This pole is cancelled, however, by a zero in the numerator induced by $(1 - y_0(t))$ (since $y_0(\rho_1) = 1)$ making $\rho_1$ a removable singularity of $F_\kS(1;t)$. This puts the dominant singularity of $F_\kS(1;t)$ at $\rho$, and it is of the square root type.

Finally, for positive drift, $\tau < 1$, so that the prefactor induces a pole at $\rho_1 = 1/P(1)$ before $1 - y_0(t)$ becomes singular. The argument conlcudes by subtracting singularities, since the function
\[
 F_\kS(1;t) - \frac{1 - y_0(\rho_1)}{1 - zP(1)}, \hspace{1em} \rho_1 := \frac{1}{P(1)},
\]
now has a dominant singularity of the square root type at $\rho$.
\end{proof}

\begin{ex}\label{asymptex}
We complete this chapter's running example, obtaining asymptotic results. Dyck prefixes have a zero drift step set, since
\[
 P'(y) = 1 - \frac1{y^2} \Rightarrow P'(1) = 0.
\]
In order to apply the results of Theorem \ref{dirasympt}, we need the second derivative of the inventory:
\[
 P''(y) = \frac{2}{y^3}.
\]
So, by the aforementioned theorem,
\[
 [t^n]F_\kS(1;t) \sim \sqrt{2\frac{P(1)}{P''(1)}}\frac{P(1)^n}{\sqrt{\pi n}} = \sqrt{\frac{2}{\pi}}\frac{2^n}{\sqrt{n}}.
\]
\end{ex}

This completes the material on directed lattice paths with small steps. The following chapter returns to undirected paths, starting with walks confined to a half plane. By showing a bijection between directed walks and half plane walks, we set up a useful enumerative result for our analysis of quarter plane walks in Chapter \ref{qpwalks}. In Chapter \ref{conclusion}, we discuss how we can use results on directed walks with more general step sets for a generalised version of the results given in this document.

\chapter{Walks confined to the half-plane}\label{hpwalks}

We turn back to undirected walks with small steps, but begin restricting the region. By adding a boundary to the unrestricted case considered in Chapter \ref{undir} we can begin to see how these combinatorial structures could model more than just polymers. For instance, a single boundary model could represent the interaction of a particle with some kind of container. While in general any line in $\RR^2$ could be used as the boundary of a half plane, we will only discuss the case where the $x$-axis is used.

\section{Definition}\label{hdef}

Fix a set of small steps $\kS$ and take the class $\cW_\kS$ of planar walks on $\kS$. We make the restriction that all elements of the sequence of points $(p_0,...,p_n)$ for each walk of length $n$ must lie inside the half plane $y \geq 0$. We call this subclass of walks $\cH_\kS$ and seek $h(n)$, the cardinality of $\cH_n$. Shown in Figure \ref{half} is a typical example of one of these.

\begin{figure}[h]
 \begin{center}
  \begin{tikzpicture}[scale = 1]
  \draw[->, thick] (0,0) -- (0,4.5);
  \draw[<->, thick] (-3,0) -- (3,0);
  \draw[-, dotted] (-3,1) -- (3,1);
  \draw[-, dotted] (-3,2) -- (3,2);
  \draw[-, dotted] (-3,3) -- (3,3);
  \draw[-, dotted] (-3,4) -- (3,4);
  \draw[-, dotted] (-3,0) -- (-3,4);
  \draw[-, dotted] (-2,0) -- (-2,4);
  \draw[-, dotted] (-1,0) -- (-1,4);
  \draw[-, dotted] (1,0) -- (1,4);
  \draw[-, dotted] (2,0) -- (2,4);
  \draw[-, dotted] (3,0) -- (3,4);
  \draw[->, very thick,red] (0,0) -- (1,1);
  \draw[->, very thick,red] (1,1) -- (2,2);
  \draw[->, very thick,red] (2,2) -- (1,3);
  \draw[->, very thick,red] (1,3) -- (0,2);
  \draw[->, very thick,red] (0,2) -- (-1,1);
  \draw[->, very thick,red] (-1,1) -- (-2,2);
  \draw[->, very thick,red] (-2,2) -- (-3,3);
  \draw[->, very thick,red] (-3,3) -- (-2,4);
  \draw[->, very thick,red] (-2,4) -- (-1,3);
  \draw[->, very thick,red] (-1,3) -- (-2,2);
  \draw[->, very thick,red] (-2,2) -- (-3,1);
  \end{tikzpicture}

  $n = 11$, $\kS = \diagr{NW,NE,SW,SE}$
  \end{center}
\caption{A walk of length 11 from $\cH_\kS$}\label{half}
\end{figure}

Given a step set $\kS$ with inventory $S(x,y)$, we can use recursion to find a functional equation that the generating function $H_\kS(x,y;t)$ satisfies. It is similar to before: a half plane walk is either the empty walk or a walk of shorter length incremented by adding a new step. We must take care here, however, and be sure that we don't wander out of the allowed region. We can exculde this possibility by considering the generating function $H_\kS(x,0;t)$, which counts walks ending on the $x$ axis.

So, the functional equation which $H_\kS(x,y;t)$ satisfies is
\[
 H_\kS(x,y;t) = 1 + tS(x,y)H_\kS(x,y;t) - \{y^{<0}\}tS(x,y)H_\kS(x,0;t).
\]
Notice the similarities to the functional equation for directed meanders? This is because these two classes are equinumerous when we make $x = 1$, referred to in Chapter \ref{dir} as taking the directed inventory, given as
\[
 S(x,y) \mapsto P(y) = S(1,y).
\]
This brings us to a new definition.

\begin{defn}
Fix a step set $\kS$, with inventory $S(x,y)$. When we take the directed inventory of $\kS$, $P(y) = S(1,y)$, we obtain a new, directed step set $\kS_P$ which is in general weighted (has multiple steps in the same direction), which we call the \textit{horizontal projection} of $\kS$. 
\end{defn}

We illustrate this new definition and the bijection it gives in the following example.

\begin{ex}
We show in Figure \ref{hpex} the weighted step set found by taking the directed inventory of $\kS = \{\textbf{NE,~ SE,~ SE,~NW}\}$. This is $\kS_P$, the horizontal projection of $\kS$.

\begin{figure}[h]
 \begin{center}
  \diagb{NW,NE,SW,SE}
    \begin{tikzpicture}[scale = 0.5]
	\draw[-,black,thick] (0,-0.1) -- (0,0.1);
	\draw[-,black,thick] (0,0) -- (1,0);
	\draw[-,black,thick] (1,0) -- (0.8,0.2);
	\draw[-,black,thick] (1,0) -- (0.8,-0.2);
	\draw[-,white] (0,-0.2) -- (0,-0.8);
    \end{tikzpicture}
    \begin{tikzpicture}[scale = 0.4]
      \draw[-,red, thick] (0,0) -- (1,1);
      \draw[-,red, thick] (0,0) -- (1,-1);
      \draw[-,blue, thick] (0.5,0) -- (1.5,1);
      \draw[-,blue, thick] (0.5,0) -- (1.5,-1);
    \end{tikzpicture}
  \end{center}
\caption{Taking the directed inventory of $\kS$ leads to the horizontal projection $\kS_P$}\label{hpex}
\end{figure}
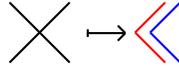

Now in the directed model, every $x$-negative step is a red step and every $x$-positive step a blue one. This gives the bijection between $\cH_\kS$ on $\kS$ and $\cF_{\kS_P}$ on $\kS_P$, and we show an example of the correspondence in Figure \ref{bij}.

\begin{figure}[h]
\begin{center}
  \begin{tikzpicture}[scale = 0.8]
  \draw[->, thick] (0,0) -- (0,4.5);
  \draw[<->, thick] (-3,0) -- (3,0);
  \draw[-, dotted] (-3,1) -- (3,1);
  \draw[-, dotted] (-3,2) -- (3,2);
  \draw[-, dotted] (-3,3) -- (3,3);
  \draw[-, dotted] (-3,4) -- (3,4);
  \draw[-, dotted] (-3,0) -- (-3,4);
  \draw[-, dotted] (-2,0) -- (-2,4);
  \draw[-, dotted] (-1,0) -- (-1,4);
  \draw[-, dotted] (1,0) -- (1,4);
  \draw[-, dotted] (2,0) -- (2,4);
  \draw[-, dotted] (3,0) -- (3,4);
  \draw[->, very thick,red] (0,0) -- (1,1);
  \draw[->, very thick,red] (1,1) -- (2,2);
  \draw[->, very thick,red] (2,2) -- (1,3);
  \draw[->, very thick,red] (1,3) -- (0,2);
  \draw[->, very thick,red] (0,2) -- (-1,1);
  \draw[->, very thick,red] (-1,1) -- (-2,2);
  \draw[->, very thick,red] (-2,2) -- (-3,3);
  \draw[->, very thick,red] (-3,3) -- (-2,4);
  \draw[->, very thick,red] (-2,4) -- (-1,3);
  \draw[->, very thick,red] (-1,3) -- (-2,2);
  \draw[->, very thick,red] (-2,2) -- (-3,1);
  \end{tikzpicture}
  \begin{tikzpicture}[scale = 0.5]
  \draw[<->,black,very thick] (-1,0) -- (1,0); \draw[white,thick] (-1,-3) -- (1,-3); \end{tikzpicture}
  \begin{tikzpicture}[scale = 0.8]
  \draw[->, thick] (0,0) -- (0,4.5);
  \draw[->, thick] (0,0) -- (11,0);
  \draw[-, dotted] (0,1) -- (11,1);
  \draw[-, dotted] (0,2) -- (11,2);
  \draw[-, dotted] (0,3) -- (11,3);
  \draw[-, dotted] (0,4) -- (11,4);
  \draw[-, dotted] (1,0) -- (1,4);
  \draw[-, dotted] (2,0) -- (2,4);
  \draw[-, dotted] (3,0) -- (3,4);
  \draw[-, dotted] (4,0) -- (4,4);
  \draw[-, dotted] (5,0) -- (5,4);
  \draw[-, dotted] (6,0) -- (6,4);
  \draw[-, dotted] (7,0) -- (7,4);
  \draw[-, dotted] (8,0) -- (8,4);
  \draw[-, dotted] (9,0) -- (9,4);
  \draw[-, dotted] (10,0) -- (10,4);
  \draw[-, dotted] (11,0) -- (11,4);
  \draw[->, very thick, blue] (0,0) -- (1,1);
  \draw[->, very thick, blue] (1,1) -- (2,2);
  \draw[->, very thick, red] (2,2) -- (3,3);
  \draw[->, very thick, red] (3,3) -- (4,2);
  \draw[->, very thick, red] (4,2) -- (5,1);
  \draw[->, very thick, red] (5,1) -- (6,2);
  \draw[->, very thick, red] (6,2) -- (7,3);
  \draw[->, very thick, blue] (7,3) -- (8,4);
  \draw[->, very thick, blue] (8,4) -- (9,3);
  \draw[->, very thick, red] (9,3) -- (10,2);

  \draw[->, very thick, red] (10,2) -- (11,1);
  \end{tikzpicture}
  \end{center}
\caption{A walk from $\cH_\kS$ and its horizontal projection in $\cF_{\kS_P}$}\label{bij}
\end{figure}
\end{ex}

This significantly simplifies classification and enumeration for us, as will be seen in the following two sections.

\section{Classification}\label{hclass}
We begin the section on classification by showing the isomorphism between $\cH_\kS$ relative to a step set $\kS$ with inventory $S(x,y)$ and $\cF$ relative to the horizontally projected step set $\kS_p$ with directed inventory $P(y) = S(1,y)$.

\begin{prop}\label{htof}
The class $\cH_\kS$ of undirected half plane walks on a step set $\kS$ and the class~$\cF_{\kS_P}$ of directed meanders on $\kS_p$, the horizontal projection of $\kS$, are in bijection.
\end{prop}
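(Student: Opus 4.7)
The plan is to construct an explicit length-preserving bijection $\phi: \cH_\kS \to \cF_{\kS_P}$ by forgetting the $x$-coordinate of each step while remembering it as a label (or equivalently, as one of the parallel weighted copies of the corresponding directed step). Given $w = (v_1, \ldots, v_n) \in \cH_\kS$ with $v_i = (a_i, b_i) \in \kS$, define $\phi(w)$ to be the length-$n$ walk whose $i$-th step is the copy of $(1, b_i) \in \kS_P$ that is labelled by $a_i$. Since $\kS_P$ is the weighted step set in which each step $(1,j)$ appears with multiplicity equal to $|\{a : (a,j) \in \kS\}|$, these labels are exactly the decoration data distinguishing equinumerous parallel copies in a walk on $\kS_P$.

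First I would verify that $\phi$ lands in $\cF_{\kS_P}$. The vertex of $\phi(w)$ after $i$ steps has $y$-coordinate $\sum_{j=1}^i b_j$, which is identical to the $y$-coordinate of the corresponding vertex of $w$. Because $w \in \cH_\kS$, all these partial sums are non-negative, so $\phi(w)$ never leaves the upper half plane, which is exactly the meander condition. Length is preserved by construction.

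Next I would exhibit the inverse $\psi: \cF_{\kS_P} \to \cH_\kS$. Given a directed meander $f = (u_1, \ldots, u_n)$ on $\kS_P$, each $u_i$ is one of the weighted copies of some $(1, b_i)$, hence carries a label $a_i \in \{-1,0,1\}$ specifying which step $(a_i, b_i) \in \kS$ it comes from. Set $\psi(f) = ((a_1,b_1), \ldots, (a_n,b_n))$. The same partial-sum argument shows $\psi(f) \in \cH_\kS$, and $\phi$ and $\psi$ are manifestly mutually inverse on the level of labelled step sequences, giving the bijection. Since both $\phi$ and $\psi$ preserve the number of steps, the two classes are equinumerous at every length, proving $\cH_\kS \cong \cF_{\kS_P}$.

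The only subtle point, and the step I expect to require the most care in the write-up, is the bookkeeping of the weights in $\kS_P$: one must be explicit that a walk on a weighted step set is a sequence of labelled copies (not merely a sequence of distinct directions), so that the counting sequence $f(n)$ for $\cF_{\kS_P}$ matches $h(n)$ for $\cH_\kS$ rather than the cardinality of the underlying un-weighted directed step set alone. Once this convention is nailed down, the correspondence between the $x$-coordinate of a step in $\kS$ and its label in the horizontal projection makes $\phi$ and $\psi$ essentially tautological.
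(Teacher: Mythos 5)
Your proof is correct, but it takes a genuinely different route from the paper. The paper argues at the level of generating functions: it derives the functional equation satisfied by $H_\kS(x,y;t)$, sets $x=1$, observes that $H_\kS(1,y;t)$ then satisfies exactly the same functional equation as $F_{\kS_P}(y;t)$, and concludes that the coefficient sequences agree after a check of initial terms. You instead build the bijection explicitly at the level of walks: projecting each step $(a,b)\mapsto (1,b)$ while retaining $a$ as the colour of the corresponding parallel copy in $\kS_P$, and observing that the $y$-coordinate of every prefix is unchanged, so the half-plane constraint on $\cH_\kS$ is verbatim the meander constraint on $\cF_{\kS_P}$. Your version is arguably more in the spirit of a ``bijection'' in the strong sense (an explicit, size-preserving invertible map), and it makes the mechanism transparent without needing either functional equation; its only delicate point, which you correctly flag, is pinning down that a walk on the weighted set $\kS_P$ means a sequence of labelled copies, so that $f(n)$ really counts coloured meanders. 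The paper's functional-equation route is less self-contained as a bijection argument (it only establishes equinumerosity via identical recursions plus initial conditions), but it has the practical advantage of handing over, as a by-product, the exact algebraic machinery needed in the next section to import Banderier--Flajolet's asymptotics. Either proof is sound; they are complementary.
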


\begin{proof}
Fix a set of steps $\kS$, with inventory $S(x,y)$. We know that the generating function~$H_\kS(x,y;t)$ for the class $\cH_\kS$ of half plane walks taken on $\kS$ satisfies
\[
 H_\kS(x,y;t) = 1 + tS(x,y)H_\kS(x,y;t) - \{y^{<0}\}tS(x,y)H_\kS(x,0;t).
\]
By taking the horizontal projection, we are setting $x = 1$, giving the new functional equation
\[
 H_\kS(1,y;t) = 1 + tS(1,y)H_\kS(1,y;t) - \{y^{<0}\}tS(1,y)H_\kS(1,0;t).
\]
The inventory is now a function of $y$ only, and can be represented by the inventory of the horizontal projection, $\kS_p$. Making the replacement $P(y) = S(1,y)$, we find
\[
 H_\kS(1,y;t) = 1 + tP(y)H_\kS(1,y;t) - \{y^{<0}\}tP(y)H_\kS(1,0;t).
\]

Now, let $F_{\kS_P}(y;t)$ be the generating function for directed meanders on the set of steps $\cP$, the horizontal projection of $\kS$. Recall from Chapter \ref{dir} that $F_{\kS_P}(y;t)$ satisfies
\[
 F_{\kS_P}(y;t) = 1 + tP(y)F_{\kS_P}(y;t) - \{y^{<0}\}tP(y)F_{\kS_P}(0;t),
\]
which is simply the functional equation for the walks in $\cH_\kS$ taken on the original step set $\kS$ with the $x$ information suppressed (or with $x$ set to 1). Thus, $H_\kS(1,y;t)$ and $F_{\kS_P}(y;t)$ satisfy the same functional equation, and therefore the coefficients of both series satisfy the same recursion. Since the first few terms of each coefficient sequence agree, this proves that these two classes, complete with the stopping height parameter $y$, are in bijection. That is
\[
 F_{\kS_P}(1;t) \equiv H_\kS(1,1;t).
\]
\end{proof}

Thus, the generating MGF $H_\kS(1,y;t)$ and OGF $H_\kS(t)$ for half plane walks on a step set $\kS$ are the same as the MGF $F_{\kS_P}(y;t)$ and the OGF $F_{\kS_P}(t)$ for directed meanders on the projected set $\kS_p$. There is one detail we have glossed over here: the $x$-coordinate of the steps. Because the half plane walks we are working with here are on sets of small steps, there is no problem with simply suppressing the $x$ information. However, for more general sets of steps, the same holds. Since the only boundary is the $x$-axis, the $y$-coordinate of each step is what is important to allowing a walk to be `legal' or not, and therefore the property that both classes are equinumerous will hold. Therefore, we may apply the results of Chapter \ref{dir} to show that $\cH_\kS$ has algebraic generating functions.

\section{Enumeration}\label{henum}

In view of our goal of enumeration, exact or asymptotic, and a corresponding generating function this is ideal: we obtain all the information we're after by considering the directed model found by setting $x = 1$. The next example illustrates this for us.

\begin{ex}
Take as an example the step set shown in Figure \ref{half}, $\kS = \{ \textbf{NW,~NE,~SW,~SE}\}$. A typical walk is shown in the same Figure. If we let $H_\kS(x,y;t)$ be the generating function for the half plane walks taken on $\kS$, then $H_\kS$ will satisfy the functional equation
\[
 H_\kS(x,y;t) = 1 + t \left(xy + \frac{x}{y} + \frac1{xy} + \frac{y}{x} \right)H_\kS(x,y;t) - t\left(\frac1{xy} + \frac{x}{y}\right)H_\kS(x,0;t).
\]
We take the horizontal projection by setting $x = 1$ to obtain
\[
 H_\kS(1,y;t) = 1 + t(2y + \frac{2}{y})H_\kS(1,y;t) - t\left(\frac2{y}\right)H_\kS(1,0;t).
\]
According to Proposition \ref{htof} this is the functional equation for the class of directed meanders on $\cP$, the horizontal projection of $\kS$, allowing us to bring to bear the machinery given in Chapter \ref{dir}. We suppress the $x$ information and rearrange the equation to give
\[
 H_\kS(y;t) - t (2y + \frac{2}{y})H_\kS(y;t) = 1 - t\frac2y H_\kS(0;t),
\]
from which we may isolate the kernel in the denominator of the right hand side, like so:
\[
 H_\kS(y;t) = \frac{1 - t\frac2y H_\kS(0;t)}{1 - t (2y + \frac{2}{y})}.
\]
We let
\[
 K(y,t) = 1 - t (2y + \frac2y),
\]
and solve the characteristic equation $K(y,t) = 0$:
\begin{eqnarray*}
 y - t (2y^2 + 2) = 0 
  &\Rightarrow& 2ty^2 - y + 2t = 0, \\
  &\Rightarrow& y = \frac{1 \pm \sqrt{1 - 16t^2}}{4t};
\end{eqnarray*}
from which we take the small branch (the branch analytic at 0) with equation
\[
 y_1(t) = \frac{1 - \sqrt{1 - 16t^2}}{4t}.
\]
We only need the enumeration information, so we ignore the parameter $y$ by setting $y = 1$. Then the generating function is
\[
 H_\kS(1,1;t) = \frac{1 - y_1(t)}{1 - tP(1)} = \frac{4t - 1 + \sqrt{1 - 16t^2}}{4t(1 - 4t)}
\]
from which we may find a formula for $h(n)$ using the generalised binomial theorem. As in the directed case, this is a radical:
\[
 H_\kS^*(t) = \frac{4t - 1 + \sqrt{1 - 16t^2}}{4t}.
\]

Next, we need asymptotics. We may apply the black box Theorem \ref{dirasympt}. First, we need the drift of the walk. It should be obvious that this is zero, but the general method is to find~$\delta = P'(1)$, so in our case
\[
 P'(y) = 2 - \frac2{y^2} \Rightarrow P'(1) = 0
\]
as expected. This tells us that we use the 0 drift expansion for the asymptotics, given as
\[
 [t^n]F_{\kS_P}(1;t) \sim \nu_0 \frac{4^n}{\sqrt{\pi n}} \left(1 + \frac{c_1}{n} + \frac{c_2}{n^2} + \cdots \right)
\]
where
\[
 \nu_0 = \sqrt{2 \frac{P(1)}{P''(1)}} = \sqrt{2}.
\]
This gives the asymptotic development for half plane walks on $\kS$ as
\[
 [t^n]F_{\kS_P}(1;t) \sim  \frac{\sqrt{2} \cdot 4^n}{\sqrt{\pi n}} \left(1 + \frac{c_1}{n} + \frac{c_2}{n^2} + \cdots \right).
\]
\end{ex}

The ability to exactly enumerate half plane walks on a step set $\kS$ is a tool which we can use to find upper bounds on the exponential growth of the quarter plane walks on the same step set. After some background on the state of quarter plane walks, Chapter \ref{qpwalks} will show how this is done.

\chapter{Walks confined to the quarter plane}\label{qpwalks}

We have seen so far that walks in the whole plane have a rational generating function, easily found by an application of elementary analytic combinatorics. Walks in a half plane are solved using the bijection to a directed model which in turn is solved by the kernel method. These models have an algebraic generating function in every case. We finally turn our attention to the final region of the thesis: walks restricted to first quadrant of $\ZZ^2$. This class is much richer, and there are only the beginnings of a unified approach \cite{FaRa12,BoRaSa12}. There are models with transcendental D-finite generating functions, and models with non-D-finite generating functions.

In Section \ref{qpdef} we give the problem in analogy with the previous three cases, stating a functional equation that the MGF will satisfy. Section~\ref{qpclass} covers a reduction of the~$2^8~-~1$ possible step sets to 79 non-equivalent models, classifying their associated generating functions as D-finite or non D-finite and providing an asymptotic growth template for the D-finite cases. As in the half plane model and the directed model, results will be dependent upon the drift.

Section \ref{qpenum} covers the enumerative results for the D-finite cases, beginning with a survey of previous results. After the survey, we give in Section \ref{base} proofs of the exponential growth for 8 D-finite step sets which form the base cases in our case analysis. Some of these proofs are original, and those taken from other authors are marked as such. Sections \ref{ubs} and \ref{lbs} give an original systematic way of proving the exponential growth of each case by bounding the exponential growth above in a unified way and importing lower bounds from the base cases via a boot strapping method given in Lemmas \ref{onestep} and \ref{twostep}. The results are summarised and notation for each case are introduced in Table \ref{summ}.

\begin{table}
\begin{center}
\begin{tabular}{| c | c | c | c | c | c |}
  \hline
  $i$ & $\kS_i$ & Parent & $\delta_i$ & $\beta_i$ & Lower bounds \\ \hline
  &&&&&\\[-5pt] 
  1 &  \diagr{N,S,E,W} & \diagr{N,S} & 0 & 4 & Lemma \ref{twostep} on \diagr{N,S} \\ [+2mm]
  2 & \diagr{NW,SW,NE,SE} & Base Case & 0 & 4 & Proposition \ref{S02} (p \pageref{S02}) \\  [+2mm]
  3 & \diagr{N,S,NE,NW,SW,SE} & \diagr{NE,SE,SW,NW} & 0 &  6 & Lemma \ref{twostep} on $\kS_2$ \\ [+2mm]
  4 & \diagr{N,S,NE,NW,SW,SE,E,W} & \diagr{NE,SE,SW,NW,N,S} & 0 & 8 & Lemma \ref{twostep} on $\kS_3$ \\ [+2mm]
  5 & \diagr{S,NE,NW} & \diagr{W,N,SE} & \drn & 3 & Proposition \ref{S05} (p \pageref{S05}) \\ [+2mm]
  6 & \diagr{S,NE,NW,E,W} & \diagr{NW,NE,S} & \drn & 5 & Lemma \ref{twostep} on $\kS_5$ \\ [+2mm]
  7 & \diagr{S,NE,NW,N} & \diagr{NE,NW,S} & \drn & 4 & Lemma \ref{onestep} on $\kS_5$ \\ [+2mm]
  8 & \diagr{S,NE,NW,N,E,W} & \diagr{NW,NE,N,S} & \drn & 6 & Lemma \ref{twostep} on $\kS_7$ \\ [+2mm]
  9 & \diagr{SW,SE,NE,NW,N} & \diagr{NW,NE,SW,SE} & \drn & 5 & Lemma \ref{onestep} on $\kS_2$ \\ [+2mm]
  10 & \diagr{SW,SE,NE,NW,N,E,W} & \diagr{SE,SW,NE,NW,N} & \drn & 7 & Lemma \ref{twostep} on $\kS_9$ \\ [+2mm]
  11 & \diagr{S,SW,SE,N} & Base Case & \drs & $2\sqrt{3}$ & Proposition \ref{S11} (p \pageref{S11}) \\ [+2mm]
  12 & \diagr{S,SW,SE,N,E,W} & \diagr{S,SW,SE,N} & \drs & $2(1 + \sqrt3)$ & Lemma \ref{twostep} on $\kS_{11}$ \\ [+2mm]
  13 & \diagr{S,SW,SE,NW,NE} & Base Case & \drs & $2\sqrt6$ & Proposition \ref{S13} (p \pageref{S13}) \\ [+2mm]
  14 & \diagr{S,SW,SE,NW,NE,E,W} & \diagr{S,SW,SE,NW,NE} & \drs & $2(1 + \sqrt6)$ & Lemma \ref{twostep} on $\kS_{13}$ \\ [+2mm]
  15 & \diagr{SW,SE,N} & Base Case & \drs & $2\sqrt2$ & Proposition \ref{S15} (p \pageref{S15}) \\ [+2mm]
  16 & \diagr{SW,SE,N,E,W} & \diagr{SW,SE,N} & \drs & $2(1 + \sqrt2)$ & Lemma \ref{twostep} on $\kS_{15}$ \\ [+2mm]
  17 & \diagr{N,W,SE} & Base Case & 0 & 3 & Proposition \ref{S17} (p \pageref{S17}) \\ [+2mm]
  18 & \diagr{N,S,E,W,NW,SE} & \diagr{NW,E,SE,W} & 0 & 6 & Lemma \ref{twostep} on $\kS_{22}$ \\ [+2mm]
  19 & \diagr{S,W,NE} & Base Case & 0 & 3 & Proposition \ref{S19} (p \pageref{S19}) \\ [+2mm]
  20 & \diagr{N,E,SW} & Base Case & 0 & 3 & Proposition \ref{S19} (p \pageref{S19}) \\ [+2mm]
  21 & \diagr{N,S,E,W,NE,SW} & \diagr{N,E,S,W} & 0 & 6 & Lemma \ref{twostep} on $\kS_1$ \\ [+2mm]
  22 & \diagr{NW,SE,W,E} & Base Case & 0 & 4 & Proposition \ref{S22} (p \pageref{S22}) \\ [+2mm]
  23 & \diagr{NE,SW,W,E} & \diagr{E,W} & 0 & 4 & Lemma \ref{twostep} on \diagr{E,W} \\ \hline
\end{tabular}
\caption{The exponential growth factors for the D-finite cases and how lower bounds on the exponential growth are found. Upper bounds are given by Theorem \ref{bnd} for every case.}\label{summ}
\end{center}
\end{table}

\section{Definitions}\label{qpdef}

Fix a step set $\kS \subseteq \bar{\kS}$. We call the class of walks taken on $\kS$ restricted to the quarter plane~$\cQ_\kS$. It should be noted that on the step set $\kS$, we have $\cQ_\kS \subseteq \cH_\kS$. This fact will prove useful in the asymptotic enumeration of the quarter plane walks.

So, let $q(n)$ be the number of walks on $\kS$ confined to the quarter plane. Then the OGF for $\cQ_\kS$ is 
\[
 Q_\kS(t) = \sum_{n \geq 0} q(n)t^n.
\]
As with the half plane case, in order to find a functional equation, we need to parameterise the class. Let $q(i,j;n)$ be the number of walks with steps from $\kS$, confined to the quarter plane. Then the MGF for $\cQ_\kS$ with parameters $x$ and $y$ marking $x$ and $y$ coordinates, respectively, of the endpoint, is
\[
 Q_\kS(x,y;t) = \sum_{i,j,n \geq 0} q(i,j;n) x^iy^jt^n.
\]
The functional equation that $Q_\kS(x,y;t)$ satisfies is
\begin{eqnarray}
 Q_\kS(x,y;t) = 1 &+& t\left( S(x,y)Q_\kS(x,y;t) \right. \nonumber \\
    &-& \{x^{<0}\}S(x,y)Q_\kS(0,y;t) \nonumber \\
    &-& \left. \{y^{<0}\}S(x,y)Q_\kS(x,0;t) \right) \nonumber \\
    &+& \omega(\kS)\frac{t}{xy}Q_\kS(0,0;t),
\end{eqnarray}
where
\[
 \omega(\kS) = \left\{ \begin{array}{ll} 1 & \mbox{if } (-1,-1) \in \kS, \\ 0 & \mbox{otherwise.} \end{array} \right.
\]
Note the differences from the half plane functional equation. The first difference to note is the term $t\{x^{<0}\}S(x,y)Q_\kS(0,y;t)$, removing the walks of length $n+1$ which make their way into the region $x < 0$. We also have the term $\omega(\kS)\frac{t}{xy}Q_\kS(0,0;t)$, which corrects for undercounting if the \textbf{SW} step is included in $\kS$ and therefore removed twice since it is both~$x$-negative and~$y$-negative.

Again, we have a notion of drift of a step set, this time with two components. We recall it in the following definition.

\begin{defn}
For a given step set $\kS$, the \textbf{drift} is defined to be the vector sum of the steps in~$\kS$. 
\end{defn}

Note that since the drift is a sum of 2-vectors, it will be a 2-vector also. Thus we give the drift in terms of the sign of each component. This is illustrated by an example.

\begin{ex}
Take the step set $\diagr{N,E,S,W,NW}$. The horizontal and vertical steps will have vector sum~$0$, so the vector sum of the whole step set is in the \textbf{NW} direction, given by the ordered pair~$(-1,1)$ or the vector diagram \begin{tikzpicture}[scale = 0.2] \draw[->, very thick] (0,0) -- (-1,1); \end{tikzpicture}. In this case, we say the drift is $y$-positive and $x$-negative.
\end{ex}

This new functional equation is a little more complicated than before. First of all, it's in terms of at least three generating functions and in some cases four. The success of the kernel method in the directed and half plane models is due to the low number of unknowns that can be made dependent by solving the kernel equation, a property missing from this case. We can rearrange the functional equation to isolate~$Q_\kS(x,y;t)$, giving
\begin{equation}\label{qpfe}
 Q_\kS(x,y;t) = \frac{1 - t\left( \{x^{<0}\}S(x,y)Q_\kS(0,y;t) + \{y^{<0}\}S(x,y)Q_\kS(x,0;t) \right) + \omega(\kS)\frac{t}{xy}Q_\kS(0,0;t)}{1 - tS(x,y)}.
\end{equation}
In analogy with the directed model, the denominator is called the kernel of the model, and plays a part in the orbit-sum enumerative techniques used by Bousquet-M\'{e}lou and Mishna in \cite{BoMi10}. The following section gives an overview of the part the kernel plays in the classification of $Q_\kS(x,y;t)$ as algebraic, D-finite or non D-finite.

\section{Classification}\label{qpclass}

In the full plane model, there are $2^8 - 1$ possible step sets $\kS$ available. Of these, some will give rise to trivial classes $\cQ_\kS$. These are subsets of $\{ \textbf{SE,~S,~SW,~W,~NW}\}$. There are also step sets which will never interact with one boundary, making them automatically half plane models. Shown in Figure \ref{autohp} is one of the largest step sets that will behave in this way. Another is found by reflecting the first across $y = x$. By removing trivial step sets as well as step sets which are automatically half plane models, thus solvable by the kernel method, and considering symmetries, Bousquet-M\'{e}lou and Mishna in \cite{BoMi10} reduce this number to 79 non-equivalent true quarter plane models.

\begin{figure}[h]
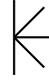

 \begin{center}
  \diagb{N,NE,E,SE,S}
  \caption{The largest step set which is automatically a half plane model.}\label{autohp}
 \end{center}
\end{figure}

Associated with each model is $G(\kS)$, some dihedral group of birational transformations which preserves the kernel $K(x,y;t) = 1 - tS(x,y)$ of a model - hence dependent on~$\kS$. Note that not all dihedral symmetries are in $G(\kS)$, the group depends on the generators defined in \cite{BoMi10}. For~23 step sets $\kS$, this group is finite while for the remaining 56 it is infinite. There is a lot of evidence for a correspondence between the cardinality of $G(\kS)$ and the classification of the associated generating function: it is shown in \cite{BoMi10} that 22 of the 23 finite group models have D-finite generating functions - with the 23rd requiring another approach in its own paper \cite{BosKau09} - and it is shown in \cite{BoRaSa12} that 51 of the 56 infinite group models have non D-finite generating functions. The remaining 5 models are known as the singular models, and are particularly stubborn when it comes to being shown they are non D-finite (see \cite{MiRe09}).

For the remainder of this chapter, we focus on those models associated with a finite group or those with a D-finite generating function. These models are listed in Table \ref{fintab}, along with experimentally found asymptotic expressions, first given in \cite{BoKa09}, which we shall discuss in the next section. For the remainder of this section, we give a stronger classification of the generating functions, which helps with the asymptotic enumeration later on. We recall a definition from \cite{BoKa09}.

\begin{defn}
\cite{BoKa09} A power series $Q(t) = \sum_{n \geq 0}a_nt^n$ in $\QQ[\![t]\!]$ is called a $G$-series if:
\begin{enumerate}
 \item it is D-finite;
  \item its radius of convergence in $\CC[\![t]\!]$ is positive;
  \item there exists a constant $C > 0$ such that for all $n \in \NN$ the common denominator of $a_0,...,a_n$ is bounded by $C^n$.
\end{enumerate}
\end{defn}

Since generating functions are elements of $\ZZ[\![t]\!]$, they trivially pass the third condition. Furthermore, the generating functions in question here are a priori D-finite, so the first condition is satisfied also. We can show the second condition to be satisfied by considering the final line of the previous paragraph: the number of walks of length $n$ cannot exceed $|\kS|^n$. We summarise this in the following proposition.

\begin{prop}\label{rocbnd}
\cite[Theorem 1]{BoKa09} For a given step set $\kS$, the radii of convergence of each of $Q_\kS(0,0;t)$, $Q_\kS(0,1;t)$, $Q_\kS(1,0;t)$, $Q_\kS(1,1;t)$ is bounded below by $\frac{1}{|\kS|}$.
\end{prop}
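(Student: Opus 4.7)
The plan is to prove this by direct coefficient comparison with the whole-plane model from Chapter \ref{undir}, then invoke the exponential growth formula (Theorem \ref{expgrow}) to convert the coefficient bound into a bound on the radius of convergence.

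First I would observe the fundamental containment $\cQ_\kS \subseteq \cW_\kS$: every quarter-plane walk on $\kS$ is in particular an unrestricted walk on $\kS$. From Section \ref{uenum} we know that the total number of unrestricted walks of length $n$ on $\kS$ is exactly $|\kS|^n$, since at each step there are $|\kS|$ choices. Consequently, the coarsest counting sequence satisfies $q(n) \leq |\kS|^n$, which immediately handles $Q_\kS(1,1;t) = \sum_{n \geq 0} q(n) t^n$.

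Next I would extend the bound to the three other specialisations. For any $(x_0, y_0) \in \{(0,0), (0,1), (1,0), (1,1)\}$, the $t^n$ coefficient of $Q_\kS(x_0, y_0; t)$ is
\[
 [t^n] Q_\kS(x_0, y_0; t) = \sum_{i,j \geq 0} q(i,j;n) x_0^i y_0^j.
\]
Since each $q(i,j;n)$ is a non-negative integer and $x_0, y_0 \in \{0,1\}$, every summand is bounded above by $q(i,j;n)$. Summing yields the bound $[t^n] Q_\kS(x_0, y_0; t) \leq \sum_{i,j} q(i,j;n) = q(n) \leq |\kS|^n$. (Adopting the convention $0^0 = 1$ at the origin causes no issues since these coefficients are still dominated by $q(n)$.)

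Finally, I would apply Theorem \ref{expgrow}. Each of the four series has non-negative coefficients dominated by $|\kS|^n$, so the exponential order satisfies $[t^n] Q_\kS(x_0, y_0; t) \bowtie K^n$ for some $K \leq |\kS|$. By the exponential growth formula this forces the modulus of the dominant real singularity to satisfy $R \geq 1/|\kS|$, which is precisely the claimed lower bound. There is no real obstacle here; the argument is essentially a one-line consequence of the set-theoretic inclusion $\cQ_\kS \subseteq \cW_\kS$ combined with the positivity of the MGF coefficients, and the proposition is really just formalising an intuitive upper bound that will later be used as the trivial upper bound for the exponential growth factors in Section \ref{ubs}.
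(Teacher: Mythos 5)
Your proof is correct and takes essentially the same approach as the paper: bound the quarter-plane counting sequence by the whole-plane count $|\kS|^n$ and apply the exponential growth formula (Theorem \ref{expgrow}). The only difference is that you explicitly justify the bound for the specialisations $Q_\kS(0,0;t)$, $Q_\kS(0,1;t)$ and $Q_\kS(1,0;t)$ via coefficient non-negativity, a step the paper's proof leaves implicit.
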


\begin{proof}
The total number of walks in $\cW$ taken on $\kS$ is $|\kS|^n$, thus this forms an upper bound on the number of walks in $\cQ_\kS$ taken on $\kS$, which is also the coefficient $q(n)$. By virtue of the exponential growth formula from Theorem \ref{expgrow}, this upper bound on the coefficients of the generating function forms a lower bound on the radius of convergence, proving the statement.
\end{proof}

This satisfies the second property of $G$-series, completing the proof of the following theorem from \cite{BoKa09}.

\begin{thm}\label{gseries}
\cite[Theorem 2]{BoKa09} The generating functions for the quarter plane models $\cQ_{\kS_i}$ for $i=1,...,23$ are $G$-series.
\end{thm}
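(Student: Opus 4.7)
The plan is simply to verify the three defining conditions of a $G$-series for each of the 23 generating functions in turn, each of which is handled by a result already in hand or a trivial observation.

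First, I would dispatch D-finiteness (condition~1). For 22 of the 23 step sets in question, D-finiteness of $Q_{\kS}(x,y;t)$ is established in \cite{BoMi10} via the orbit-sum method associated with the finite group $G(\kS)$; the one remaining case (Gessel's step set) is proven D-finite in \cite{BosKau09}. Evaluation at rational points preserves D-finiteness, so in particular $Q_{\kS}(1,1;t)$ is D-finite for each of the 23 models. Nothing new needs to be proved here.

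Second, condition~2 (positive radius of convergence) is immediate from Proposition \ref{rocbnd}: the coefficient sequence $q_\kS(n)$ is bounded above by $|\kS|^n$ since every quarter plane walk is in particular an unrestricted walk, so by the exponential growth formula (Theorem \ref{expgrow}) the radius of convergence of $Q_\kS(1,1;t)$ is at least $1/|\kS| > 0$. The same argument applies to $Q_\kS(0,0;t)$, $Q_\kS(1,0;t)$, and $Q_\kS(0,1;t)$ since each of these counts a subclass of $\cQ_\kS$.

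Finally, condition~3 on the common denominator of the initial coefficients is trivial in our setting. Since $Q_\kS(x,y;t)$ is a combinatorial generating function, its coefficients $q_\kS(i,j;n)$ are non-negative integers; hence so are the coefficients of $Q_\kS(1,1;t)$, $Q_\kS(0,0;t)$, etc. The common denominator of any finite collection of integers is $1$, so the bound $C^n$ holds with $C = 1$ (or any $C \geq 1$). Assembling the three verifications yields the result for each $i \in \{1,\dots,23\}$. No single step presents a real obstacle: the substantive content is all imported, and the main point of the theorem is to package these facts into the $G$-series framework so that the growth template $a_n \sim \kappa \beta^n n^\alpha (\log n)^\gamma$ becomes available for use in the subsequent asymptotic analysis.
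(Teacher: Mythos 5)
Your proposal matches the paper's own argument essentially verbatim: condition~3 is dispatched because the coefficients are integers, condition~1 is imported from \cite{BoMi10} and \cite{BosKau09}, and condition~2 is exactly Proposition \ref{rocbnd}. Your remark that specialization at rational points preserves D-finiteness is a small clarification the paper leaves implicit, but the structure and substance of the argument are the same.
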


In \cite{BoKa09}, Bostan and Kauers show Theorem \ref{gseries} along with a property of the asymptotic growth of the coefficients of the generating functions. It can be concluded from Theorem \ref{gseries} and results in \cite{Ga09} that the coefficients of any one of the generating functions in question fits the asymptotic template
\[
 q(n) \sim \kappa \beta ^n n^\alpha (\log n)^\gamma.
\]
Furthermore, Bostan and Kauers give compelling evidence that $\gamma=0$ for each of the cases we consider in this chapter, thus these models probably have the same growth template as an algebraic series. We discuss this further in the following section on enumeration.

\section{Enumeration}\label{qpenum}

We begin the enumeration discussion by surveying some of the more general previous enumerative results linked with the classification of the generating functions in Section \ref{qpprev}. These are split into rigourous results given by Bousquet-M\'{e}lou and Mishna in~\cite{BoMi10} and the experimental results of Bostan and Kauers in \cite{BoKa09}.

Sections \ref{base} through \ref{lbs} contain the original contribution of this thesis, where we prove the exponential growth factors for the D-finite models by bounding the exponential growth and applying a squeezing argument. Upper bounds are found in a unified way via a relaxation of the restrictions placed on the class. Lower bounds are given by a case analysis reduced from 23 to 8 cases thanks to Lemmas \ref{onestep} and \ref{twostep}.

\subsection{Previous Results}\label{qpprev}

In this section, we give some rigourous and experimental results of previous authors, highlighting how they are tied to the classification of the generating functions.

\subsubsection{Rigourous results using orbit sums}

In Chapter \ref{dir}, we used the kernel method, which takes a solution to the kernel equation and uses it to bind $t$ and the catalytic variable $y$, which marks the altitude of the endpoint, in such a way that we reduced the number of unknowns and found a solution. For 22 of the 23 finite group models, Bousquet-M\'{e}lou and Mishna apply a similar process to the functional equation satisfied by the generating functions of those models in \cite{BoMi10}. We briefly discuss the process here, leaving the details to the source material.

First, we rewrite Equation (\ref{qpfe}) as
\begin{eqnarray*}
  K(x,y;t)xyQ_\kS(x,y;t) = xy &-& xyt\left(\{x^{<0}\}S(x,y)Q_\kS(0,y;t) \right. \\
  && \hspace{2em} + \left. \{y^{<0}\}S(x,y)Q_\kS(x,0;t)\right) + t\omega(\kS)Q_\kS(0,0;t).
\end{eqnarray*}
The multiplication by $xy$ will clear $x$ and $y$ from the denominators in every term, thus $xy\{x^{<0}\}S(x,y)$ is now a polynomial in $y$ and similarly $xy\{y^{<0}\}S(x,y)$ is a polynomial in $x$. Now let $F(x) = xy\{y^{<0}\}S(x,y)Q_\kS(x,0;t)$ and $G(y) = xy\{x^{<0}\}S(x,y)Q_\kS(0,y;t)$, so the above equation becomes
\begin{equation}\label{osum}
 K(x,y;t)xyQ_\kS(x,y;t) = xy - tF(x) - tG(y) + \omega(\kS)tQ_\kS(0,0;t).
\end{equation}

Because the unknowns are no longer mixed as functions of $x$ and $y$, the resulting equation is well suited to the orbit sum method.

\subsubsection{The orbit sum method}

The group $G(\kS)$ is defined as a group of birational transformations of the plane preserving the kernel $K(x,y;t) = 1 - tS(x,y)$. This is a dihedral group with two generators
\[
 \Phi: (x,y) \mapsto (x',y) \mbox{ and } \Psi : (x,y) \mapsto (x,y'),
\]
where the terms $x'$ and $y'$ are rational functions in $x$ and $y$ and depend on the step set $\kS$. The fact that each generator acts only on one of the variables $x$ and $y$ is the key concept here. Acting on Equation (\ref{osum}) with $\Phi$, we find
\begin{equation}\label{osumphi}
 K(x,y;t)x'yQ_\kS(x',y;t) = x'y - tF(x') - tG(y) + \omega(\kS)tQ_\kS(0,0;t).
\end{equation}
Then we take the difference of Equations (\ref{osum}) and (\ref{osumphi}), eliminating the term $tG(y)$:
\begin{equation}\label{osum1}
 K(x,y;t)\left( xyQ_\kS(x,y;t) - x'yQ_\kS(x',y;t)\right) = xy - x'y - tF(x) + tF(x').
\end{equation}
The next step is to act on Equation (\ref{osumphi}) with $\Psi$, generating a new equation, and adding it to Equation (\ref{osum1}), continuing to alternately act with $\Phi$ and $\Psi$ and taking the alternating sum. If the group is finite, as it is for our 23 cases, this process will terminate and all unknown functions on the right hand side will vanish. This leads to the equation
\[
 \sum_{g \in G} \mbox{sgn}(g)g \cdot \left(xyQ_\kS(x,y;t)\right) = \frac{1}{K(x,y;t)}\sum_{g \in G} \mbox{sgn}(g) g\cdot(xy).
\]
The coefficient of each summand is $\mbox{sgn}(g) = (-1)^k$, where $k$ is the number of generators~$\Phi$ and $\Psi$ in the decomposition of $g$. For 19 cases, this is enough to show D-finiteness. Essentially, the generating function is expressed as the constant term of a multivariate rational function. Since D-finite functions are closed under Hadamard products \cite{Li89}, it follows quickly. For enumerative purposes, we can arrive at an expression for the generating function as a rational series extraction
\[
 xyQ_\kS(x,y;t) = \{x^{>0}y^{>0}\}\frac{1}{K(x,y;t)}\sum_{g \in G} \mbox{sgn}(g) g\cdot(xy).
\]
From this, we may derive the formula for the number of walks of length $n$ ending at $(i,j)$:
\[
 q(i,j;n) = [x^{i+1}y^{j+1}]\left(\sum_{g \in G} \mbox{sgn}(g) g\cdot(xy) \right)S(x,y)^n.
\]

For the remaining four models the approach isn't as useful because the right hand side for each vanishes, leaving
\[
 \sum_{g \in G} \mbox{sgn}(g)g \cdot \left(xyQ_\kS(x,y;t)\right) = 0.
\]
For one of these models known as Gessel's model, $\kS_{23}$, we have not mastered the use of rotational symmetry and so this model cannot be handled by this method in a straight forward manner. Bostan and Kauers treat this model in \cite{BosKau09}. For the remaining three, it is due to an $x/y$ symmetry: the presence of $(y,x)$ in the orbit of $(x,y)$. Bousquet-M\'{e}lou and Mishna approach this with an alternating sum over half of the orbit instead, and manage to solve these three models.

While these formulae and expressions provide invaluable tools to generate enumerative data and a classification of the generating functions for 22 of the 23 cases, the aim was not to provide asymptotic data. The rational series extraction gives no hint as to the location of the singularities of the generating function. Series generation is easy to do with help from computers and this leads to another approach. The next section will outline a very succesful experimental regime employed by Bostan and Kauers to this end.

\subsubsection{Bostan and Kauers' experimental approach}

Recall that the asymptotic growth of a counting sequence is in correspondence with the radius of convergence of the associated generating function. In \cite{BoKa09}, Bostan and Kauers use the D-finiteness, as well as other readily available information about the model, to conjecture the radius of convergence of each model and the asymptotic expressions.

Recall that a D-finite generating function satisfies a linear differential equation of order~$r$ with polynomial coefficients $c_i(t)$, where $c_0(t)$ is the leading coefficient. If we assume the existence of a simply connected domain on which the coefficients are analytic, the existence theorem on page 3 of \cite{Wa65} guarantees that in a neighbourhood of a point $t_0$ such that $c_0(t_0) \neq~0$,~there exist at most $r$ linearly independent analytic solutions to the above differential equation. This implies that singularities can only occur at points which are roots of the leading coefficient $c_0(t)$, giving an upper bound of $\deg(c_0(t))$ candidates for the dominant singularity.

Next, recall that Proposition \ref{gseries} bounds the radius of convergence of a G-series below by~$1/|\kS|$, further reducing the number of candidates for the dominant singularity.

So, to use these facts Bostan and Kauers required a differential equation satisfied by the generating function of a given model. By generating all walks on a given step set up to length $N$ (at that point, the maximum achievable was around $N = 1000$, but now with the help of Steve Melczer, it is around $N = 2000$) and searching the Weyl algebra $\QQ[t]\langle D_t \rangle$ of differential operators satisfied by the series modulo $t^N$, they were able to find a differential equation satisfied by the partial sum. The idea is that if the generating function happens to be D-finite, then for a sufficiently large $N$, a differential equation satisfied modulo $t^N$ will provide a differential equation which is really satisfied by the full power series. Analysis of the polynomial coefficients of the found differential equation yields the asymptotic information found in Table \ref{fintab}.

\begin{table}
\begin{center}
\begin{tabular}{| c | c | c | c | c |}
  \hline
  $i$ & $\kS_i$ & $\kappa$ & $\alpha$ & $\beta$ \\ \hline
  &&&&\\[-5pt] 
  1 &  \diagr{N,S,E,W} & $\frac4\pi$ & -1 & 4 \\ [+2mm]
  2 & \diagr{NW,SW,NE,SE} & $\frac2\pi$ & -1 & 4 \\  [+2mm]
  3 & \diagr{N,S,NE,NW,SW,SE} & $\frac{\sqrt{6}}\pi$ & -1 & 6 \\ [+2mm]
  4 & \diagr{N,S,NE,NW,SW,SE,E,W} & $\frac8\pi$ & -1 & 8 \\ [+2mm]
  5 & \diagr{S,NE,NW} & $\frac{\sqrt{3}}{\sqrt{\pi}}$ & $-\frac12$ & 3 \\ [+2mm]
  6 & \diagr{S,NE,NW,E,W} & $\frac{\sqrt{5}}{2\sqrt{2}\sqrt{\pi}}$ & $-\frac12$ & 5 \\ [+2mm]
  7 & \diagr{S,NE,NW,N} & $\frac4{3\sqrt{\pi}}$ & $-\frac12$ & 4  \\ [+2mm]
  8 & \diagr{S,NE,NW,N,E,W} & $\frac{2\sqrt{3}}{3\sqrt{\pi}}$ & $-\frac12$ & 6 \\ [+2mm]
  9 & \diagr{SW,SE,NE,NW,N} & $\frac{\sqrt{5}}{3\sqrt{2}\sqrt{\pi}}$ & $-\frac12$ & 5 \\ [+2mm]
  10 & \diagr{SW,SE,NE,NW,N,E,W} & $\frac{\sqrt{\frac73}}{3\sqrt{\pi}}$ & $-\frac12$ & 7 \\ [+2mm]
  11 & \diagr{S,SW,SE,N} & $\frac{12\sqrt{3}}\pi$ & $-2$ & $2\sqrt{3}$ \\ [+2mm]
  12 & \diagr{S,SW,SE,N,E,W} & $\frac{\sqrt{3}(1 + \sqrt3)^{\frac72}}{2\pi}$ & $-2$ & $2(1 + \sqrt3)$ \\ [+2mm]
  13 & \diagr{S,SW,SE,NW,NE} & $\frac{12\sqrt{30}}{\pi}$ & $-2$ & $2\sqrt6$ \\ [+2mm]
  14 & \diagr{S,SW,SE,NW,NE,E,W} & $\frac{\sqrt{6(376+156\sqrt6)}(1 + \sqrt6)^{\frac72}}{5\sqrt{95}\pi}$ & $-2$ & $2(1 + \sqrt6)$ \\ [+2mm]
  15 & \diagr{SW,SE,N} & $\frac{24\sqrt{2}}{\pi}$ & $-2$ & $2\sqrt2$ \\ [+2mm]
  16 & \diagr{SW,SE,N,E,W} & $\frac{\sqrt{8}(1 + \sqrt2)^{\frac72}}{\pi}$ & $-2$ & $2(1 + \sqrt2)$ \\ [+2mm]
  17 & \diagr{N,W,SE} & $\frac{3\sqrt{3}}{2\sqrt{\pi}}$ & $-\frac32$ & 3 \\ [+2mm]
  18 & \diagr{N,S,E,W,NW,SE} & $\frac{3\sqrt{3}}{2\sqrt{\pi}}$ & $-\frac32$ & 6 \\ [+2mm]
  19 & \diagr{S,W,NE} & $\frac{2\sqrt{2}}{\Gamma(\frac14)}$ & $-\frac34$ & 3 \\ [+2mm]
  20 & \diagr{N,E,SW} & $\frac{3\sqrt{3}}{\sqrt2\Gamma(\frac14)}$ & $-\frac34$ & 3 \\ [+2mm]
  21 & \diagr{N,S,E,W,NE,SW} & $\frac{\sqrt{2}3^{\frac34}}{\Gamma(\frac14)}$ & $-\frac34$ & 6 \\ [+2mm]
  22 & \diagr{NW,SE,W,E} & $\frac{8}{\pi}$ & $-2$ & 4 \\ [+2mm]
  23 & \diagr{NE,SW,W,E} & $\frac{4\sqrt3}{3\Gamma(\frac13)}$ & $-\frac23$ & 4 \\ \hline
\end{tabular}
\caption{The number of walks $q_i(n)$ is conjectured to grow like $\kappa \beta^n n^\alpha$ (\cite[Table 1]{BoKa09}).}\label{fintab}
\end{center}
\end{table}

The column of exponential growth factors $\beta$ provide us with strong candidates for the rigorous results. In fact, while these results are not proved rigorously in \cite{BoKa09}, the approach to do so is similar to that used in a paper by the same authors on Gessel's walks \cite{BosKau09}. They are omitted from \cite{BoKa09} because the computations required are quite large, motivating the development of another approach with more elementary techniques. 

Our approach, developed over the next three sections, uses restrictions and relaxations of the constraints on a class to bound the number of walks by known quantities, and a small amount of analysis to squeeze the exponential growth. Thanks to Lemmas \ref{onestep} and~\ref{twostep} in Section \ref{lbs}, we are able to reduce the cases analysis required to 8 base cases. We begin by proving the exponential growth of these base cases in the following section.

\subsection{Base cases}\label{base}

In Section \ref{lbs}, we give a way of importing lower bounds on the exponential growth of a step set $\kS$ from proper subsets of $\kS$. This gives us a way of reducing our 23 cases down to~8 base cases which are true quarter plane walks. There are two models which each import exponential growth from a half plane model, but in both cases the model is a class of Dyck prefixes, so we don't prove these base cases here. This section will prove the exponential growth factors for each quarter plane base case, first giving the base cases we can fully enumerate, then giving those for which we must enumerate a sufficiently large subset.

\subsubsection{Full counts}\label{basefc}
There are two base case step sets which have quarter plane models susceptible to direct counting techniques. These are $\kS_2$ and $\kS_{17}$, pictured below in Figure \ref{dc}

\begin{figure}[h]
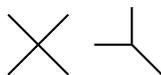

 \begin{center}
  \diagb{NE,SE,SW,NW} \diagb{N,SE,W}
 \end{center}
  \caption{The step sets which we count directly.}\label{dc}
\end{figure}

Let us move through them in order. The first, $\kS_2 = \diagr{NE,SE,SW,NW}$ is pretty straightforward.

\begin{prop}\label{S02}
The exponential growth factor for the counting sequence $q_2(n)$ is 4.
\end{prop}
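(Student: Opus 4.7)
The plan is to exploit the special structure of $\kS_2 = \{\textbf{NE},\textbf{SE},\textbf{SW},\textbf{NW}\}$: every step has both coordinates equal to $\pm 1$, and all four sign combinations appear exactly once. Consequently the two coordinate projections decouple, and a walk on $\kS_2$ of length $n$ is equivalent to an independent pair of one-dimensional walks with steps in $\{+1,-1\}$, one for each coordinate. Under this correspondence the quarter-plane restriction becomes exactly the non-negativity condition on each of the two projections, so $\cQ_{\kS_2}$ is bijectively equivalent to ordered pairs of Dyck prefixes. I can then import the asymptotics for Dyck prefixes already obtained in Example~\ref{asymptex}.

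Concretely, first I would define the map $\Phi$ sending a walk $w=(s_1,\dots,s_n)\in\cQ_{\kS_2}$ to the pair $(A,B)\in\{-1,+1\}^n\times\{-1,+1\}^n$ determined by $s_i=(A_i,B_i)$, and check that $\Phi$ is a bijection onto pairs of sequences whose running partial sums stay non-negative. The only thing to verify is the equivalence of constraints: the partial sums $\sum_{i\leq k}A_i$ and $\sum_{i\leq k}B_i$ are precisely the $x$- and $y$-coordinates of the walk after $k$ steps, so requiring both to be non-negative for every $k$ matches exactly the quarter-plane condition. This yields $q_2(n)=d(n)^2$, where $d(n)$ denotes the number of Dyck prefixes of length $n$.

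Finally, Example~\ref{asymptex} established that
\[
d(n)\sim\sqrt{\tfrac{2}{\pi}}\,\frac{2^n}{\sqrt{n}},
\]
so squaring gives $q_2(n)\sim\tfrac{2}{\pi n}\,4^n$, and hence the exponential growth factor of $q_2(n)$ is $4$. No part of the argument should present a real obstacle; the only point that might look subtle is the well-definedness of $\Phi^{-1}$, which is immediate from the observation that every one of the four sign combinations $(\pm 1,\pm 1)$ is realised by a unique step of $\kS_2$.
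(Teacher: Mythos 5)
Your proof is correct and follows essentially the same approach as the paper: project the walk onto each coordinate axis, observe that since every step of $\kS_2$ is $\pm 1$ in both coordinates the two projections are independent Dyck prefixes, conclude $q_2(n)=d(n)^2$, and extract the exponential growth. The only cosmetic difference is that the paper applies Stirling's approximation directly to $\binom{n}{\lfloor n/2\rfloor}^2$, whereas you import the Dyck-prefix asymptotics already derived in Example~\ref{asymptex}; both routes give the same conclusion.
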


\begin{proof}
For $\kS_2$, we have a simpler count. If we take a walk on $\kS_2$ and look at the projection onto each axis, we see that the endpoint of the walk traces out a Dyck prefix on either axis. Moreover, since every step is positive or negative in both $x$ and $y$, the endpoint projection moves on both axes at every step. So, if $q_2(n)$ is the number of walks of length $n$ on $\kS_2$ and~$d(n)$ is the number of Dyck prefixes, then
\[
 q_2(n) = d(n)^2 = \binom{n}{\lfloor \frac{n}{2} \rfloor}^2.
\]
An application of Stirlings approximation
\[
 n! \sim \sqrt{2\pi n} \left( \frac{n}{e} \right)^n
\]
gives the result.
\end{proof}

Finally, we move on to $\kS_{17} = \diagr{N,SE,W}$. The proof is taken from an informal discussion in \cite{BoMi10}, which can also be found in \cite[Section 3]{Mi09}.

\begin{prop}\label{S17}
\cite[Section 5.2]{BoMi10} The exponential growth factor for the counting sequence $q_{17}(n)$ of walks on $\kS_{17} = \diagr{N,SE,W}$ is 3.
\end{prop}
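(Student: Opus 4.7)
The plan is to identify $q_{17}(n)$ with the $n$th Motzkin number $M_n$ and then read off the growth rate from the Motzkin generating function established in Chapter \ref{ac}. Labelling the steps \textbf{N}, \textbf{SE}, \textbf{W} by the letters $1$, $2$, $3$ respectively, the endpoint of a walk after its first $k$ steps is $\bigl((\#2)_k - (\#3)_k,\ (\#1)_k - (\#2)_k\bigr)$, where $(\#j)_k$ counts the occurrences of $j$ in the first $k$ letters of the step-label word. The quarter-plane constraint $x,y \ge 0$ holding throughout is therefore equivalent to the ballot condition
\[
 (\#1)_k \;\ge\; (\#2)_k \;\ge\; (\#3)_k \qquad \text{for every } k \le n,
\]
and recording the step sequence gives a length-preserving bijection from $\cQ_{\kS_{17}}$ onto the set of $3$-candidate ballot sequences of length $n$.

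Next I would invoke the classical identification of these ballot sequences with Motzkin-counted objects: via the RSK correspondence, a length-$n$ ballot word on $\{1,2,3\}$ maps bijectively to a standard Young tableau with at most $3$ rows on $n$ cells, and the total count of such tableaux is known to equal $M_n$. Hence $q_{17}(n) = M_n$, which can be verified directly for small $n$ (e.g.\ for $n = 3$ one finds exactly the four ballot words $111, 112, 121, 123$, in agreement with $M_3 = 4$).

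Chapter \ref{ac} already provides a closed form for $M(t)$ as an algebraic series whose unique real singularity is a square-root branch point at $t = 1/3$; by the exponential growth formula (Theorem \ref{expgrow}), $M_n \bowtie 3^n$, and so $q_{17}(n) \bowtie 3^n$ as claimed. The trivial inclusion $\cQ_{\kS_{17}} \subseteq \cW_{\kS_{17}}$ together with the count $3^n$ from Chapter \ref{undir} independently supplies the matching upper bound $\limsup q_{17}(n)^{1/n} \le 3$.

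The only genuinely non-trivial ingredient is the RSK-based identification of ballot sequences with standard Young tableaux of height at most $3$. If one prefers to avoid quoting RSK, an alternative route is to derive a functional equation for the ballot-word OGF by a last-letter decomposition and verify that it coincides with the Motzkin equation $M(t) = 1 + tM(t) + t^2 M(t)^2$; this calculation is heavier but entirely self-contained.
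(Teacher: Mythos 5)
Your proof is correct and matches the paper's route: translate the quarter-plane constraint into the ballot inequalities, identify ballot words of length $n$ on $\{1,2,3\}$ with standard Young tableaux of at most three rows on $n$ cells, recall that these are counted by Motzkin numbers, and extract the growth factor $3$ from a known $M_n$ asymptotic. The only stylistic differences are that the paper builds the tableau bijection by hand (place label $i$ in the row determined by the $i$th step, with the prefix inequalities ensuring column-increasingness) instead of quoting RSK, and obtains $M_n \bowtie 3^n$ from Stirling's approximation rather than from Theorem \ref{expgrow} and the singularity at $t = 1/3$; neither substitution changes the substance of the argument.
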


\begin{proof}
To count walks on $\kS_{17} = \diagr{N,W,SE}$, note that before we may use $\textbf{W}$, which is $x$-negative, we must use an $x$-positive step. But the only $x$-positive step is $\textbf{SE}$, which is~$y$-negative, and before we may use it we must use the $\textbf{N}$ step. Thus, the number of $\textbf{N}$ steps must outweigh the number of $\textbf{SE}$ steps which must in turn outweigh the number of $\textbf{W}$ steps. This puts the walks in bijection with Standard Young tableaux of height at most 3, which we explain presently.

Standard Young tableaux are labelled Ferrer’s diagrams of a partition such that the boxes are labelled in a strictly increasing manner from left to right and from bottom to top. We construct a standard Young tableau of height three of size n from a walk $w = (s_1 , s_2 , . . . , s_n)$ as follows. If $s_i = \textbf{N}$ (resp. \textbf{SE, W}), then place label $i$ in the next available space to the right on the bottom row (resp. second, top). The final tableau is increasing from left to right by construction, and the prefix condition $\#\textbf{N} \geq \#\textbf{SE} \geq \#\textbf{W}$ ensures that it is increasing along the columns.

Standard Young tableaux are well understood and may be enumerated via the hook-length formula. The number of standard Young tableaux of height 3 and size $n$ is the $n$th Motzkin number. An application of Stirling's approximation will give the result.
\end{proof}

Next, we move on to the longer list of cases for which we must count a sufficiently large subset.

\subsubsection{Counting a subset}\label{basess}
For the majority of the base case step sets, closed form enumeration of all walks of a given length is beyond the reach of current techniques. We look instead for easier to count subsets with the same exponential growth. Some will be walks returning to the origin, as in step set 11, others will be walks ending on a boundary. Often, we appeal to an upper bound given by the half plane walks taken on the same step set. For the proof that these upper bounds have the same exponential growth, see Theorem \ref{bnd} in Section \ref{ubs}. We begin by counting a subset of $\kS_5 = \{ \textbf{NE,~S,~NW}\}$ by injecting walks on another step set. This isn't really a base case, but the method of proof is different enough that it warrants inclusion here.

\begin{prop}\label{S05}
The counting sequence $q_5(n)$ of walks on $\kS_5 = \diagr{NE,NW,S}$ has exponential growth factor $3$. 
\end{prop}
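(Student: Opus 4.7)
The plan is a squeeze between matching upper and lower bounds of $3$. For the upper bound, the inclusion $\cQ_{\kS_5} \subseteq \cH_{\kS_5}$ together with Proposition \ref{htof} and the positive-drift case of Theorem \ref{dirasympt}, applied to the horizontal projection of $\kS_5$ (directed inventory $P(y) = 2y + 1/y$ with drift $P'(1) = 1 > 0$), gives $q_5(n) \leq h_5(n) \sim \xi_0 \cdot 3^n$; this is the same estimate that Theorem \ref{bnd} later supplies uniformly.

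For the lower bound, I will decompose a walk on $\kS_5$ into two nearly independent one-dimensional walks. Record the \emph{type} word $\tau \in \{V,H\}^n$, with $V$ for a vertical step ($\textbf{NE}$ or $\textbf{NW}$) and $H$ for $\textbf{S}$, and the \emph{orientation} word $\sigma \in \{+1,-1\}^k$, where $k$ is the number of $V$'s in $\tau$ and $\sigma$ records $\textbf{NE}$ as $+1$ and $\textbf{NW}$ as $-1$ in the order the verticals occur. A direct computation shows that the $y$-coordinate at time $t$ equals $\#V - \#H$ in the first $t$ entries of $\tau$, while the $x$-coordinate at time $t$ equals the partial sum of $\sigma$ through its first $\#V_{\leq t}$ terms. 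Crucially, the $H$-symbols merely \emph{pause} the $\sigma$-walk rather than moving it, so non-negativity of the $x$-coordinate depends only on $\sigma$, independent of how the $H$'s are interleaved. Hence a walk lies in the quarter plane if and only if $\tau$ (reading $V=+1$, $H=-1$) is a Dyck prefix of length $n$ and $\sigma$ is a Dyck prefix of length $k$, yielding the exact formula
\[
q_5(n) \;=\; \sum_{k=0}^{n} D(n,k)\, D(k),
\]
where $D(n,k)$ counts Dyck prefixes of length $n$ with $k$ up-steps and $D(k) = \binom{k}{\lfloor k/2 \rfloor}$ counts all Dyck prefixes of length $k$.

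Finally, I retain only the single term at $k = \lfloor 2n/3 \rfloor$. The ballot/reflection identity gives $D(n,k) = \binom{n}{k} - \binom{n}{k+1}$, and at $k \sim 2n/3$ this differs from $\binom{n}{k}$ by only a constant factor. Stirling's approximation then yields $\binom{n}{2n/3} \sim c_1 \cdot 3^n / (2^{2n/3}\sqrt{n})$ and $\binom{2n/3}{n/3} \sim c_2 \cdot 2^{2n/3} / \sqrt{n}$, whose product is $\Theta(3^n/n)$. Thus $q_5(n) \geq c \cdot 3^n / n$ for some $c > 0$, and combined with the upper bound the exponential growth factor is exactly $3$. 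The main obstacle I anticipate is cleanly verifying the decoupling step — specifically, checking that the constraint on $\sigma$ really is a free Dyck-prefix condition once $\tau$ is fixed, regardless of where the $V$'s sit inside $\tau$; once that is pinned down, the remainder is a routine Stirling estimate on a single binomial term.
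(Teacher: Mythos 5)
Your proof is correct, and the lower bound is obtained by a genuinely different route from the one the paper uses. The paper's proof of Proposition \ref{S05} takes the auxiliary step set $\cT = \{\textbf{NW, E, S}\}$ (the $y=x$ reflection of $\kS_{17}$, counted by Motzkin numbers via Proposition \ref{S17}), and injects its quarter-plane walks into $\cQ_{\kS_5}$ by rotating the $\textbf{E}$ step to $\textbf{NE}$; this gives $M_n \leq q_5(n) \leq h_5(n)$, and squeezing yields $\beta_5 = 3$. Your approach instead exploits a special structural feature of $\kS_5$ directly: since both $\textbf{NE}$ and $\textbf{NW}$ raise $y$ while $\textbf{S}$ leaves $x$ fixed, the $y$-coordinate depends only on the type word $\tau$ and the $x$-coordinate depends only on the orientation word $\sigma$, with the $\textbf{S}$ steps merely pausing the $x$-walk. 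The decoupling you were worried about does hold: as $t$ runs from $0$ to $n$, the quantity $\#V_{\leq t}$ sweeps through every integer from $0$ to $k$, so the $x$-values taken are exactly the partial sums of $\sigma$, and non-negativity of $x$ is equivalent to $\sigma$ being a Dyck prefix, independently of how the $\textbf{S}$ steps are interleaved. This yields the exact formula
\[
q_5(n) = \sum_{k=0}^{n} D(n,k)\,D(k),
\]
(which checks out on small cases: $1, 1, 3, 7, \ldots$), and retaining the single term $k = \lfloor 2n/3\rfloor$ gives the lower bound $\Theta(3^n/n)$ by Stirling as you say. Both your argument and the paper's use the same half-plane upper bound (positive drift, so $\beta_5 \leq |\kS_5| = 3$). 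Your route is arguably more self-contained: the paper's proof ultimately rests on the Young-tableau bijection behind Proposition \ref{S17}, whereas your proof is elementary and delivers a closed sum for $q_5(n)$ as a free byproduct. The trade-off is that the paper's injection technique (rotating a step away from a boundary) is reusable across models, while your decoupling is specific to step sets where one coordinate is updated by a subset of the steps that fix the other.
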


\begin{proof}
Note that we may not count walks returning to the origin: these have the same count as walks taken on the negative drift counterpart of this steps set,  $\kS_{15}$, so we must be a little clever. Consider for a moment the step set $\cT = \{\textbf{NW,~E,~S}\}$, shown in Figure \ref{17'}. This set is a reflection of $\kS_{17}$ across the line $y = x$, so the number of walks of length $n$ is the $n$th Motzkin number.

\begin{figure}
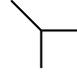

 \begin{center}
  \diagb{NW,E,S}
 \end{center}
  \caption{The helper set}\label{17'}
\end{figure}

Note that if we rotate a step so it takes us further from any boundary we are doing no harm. So if we rotate the \textbf{E} step so it becomes a \textbf{NE} step, all allowed walks from $\cT$ are still allowed on the new step set. This is the same as injecting the walks taken on $\cT$ as a subset of the walks taken on $\kS_{5}$ with the function
\begin{eqnarray*}
 f: \cT &\rightarrow& \kS_{5} \\
  \textbf{NW} &\mapsto& \textbf{NW} \\
  \textbf{E} &\mapsto& \textbf{NE} \\
  \textbf{S} &\mapsto& \textbf{S}.
\end{eqnarray*}

Thus, we have
\[
 M_n \leq q_5(n) \leq h_5(n).
\]
Taking $\lim \frac1n \log$ of each term gives
\[
 \beta_5 = 3.
\]
\end{proof}

Moving on to the next base case, $\kS_{11} = \{ \textbf{N,~SE,~S,~SW} \}$, which is the first negative drift base case.

\begin{prop}\label{S11}
The exponential growth factor of the counting sequence $q_{11}(n)$ for quarter plane walks on $\kS_{11} = \diagr{N,S,SW,SE}$ is $2\sqrt3$.
\end{prop}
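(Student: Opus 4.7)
The plan is to sandwich $\beta_{11}$ between coinciding upper and lower bounds at $2\sqrt{3}$, using a half-plane relaxation for the upper bound and an exact count of a natural subfamily for the lower bound.

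For the upper bound, since $\cQ_{\kS_{11}}\subseteq\cH_{\kS_{11}}$, we have $q_{11}(n)\leq h_{11}(n)$. By Proposition \ref{htof}, half-plane walks on $\kS_{11}$ are counted by directed meanders on its horizontal projection, whose directed inventory is $P(y)=y+3/y$. The drift is $P'(1)=-2<0$, and $P'(\tau)=1-3/\tau^{2}=0$ gives $\tau=\sqrt{3}$, hence $P(\tau)=2\sqrt{3}$. The negative-drift case of Theorem \ref{dirasympt} then gives $h_{11}(n)\sim\nu_0^{\pm}(2\sqrt{3})^n/(2\sqrt{\pi n^3})$, so $\limsup_n q_{11}(n)^{1/n}\leq 2\sqrt{3}$.

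For the lower bound, I will enumerate the subclass of quarter-plane \emph{excursions} (walks of even length $2n$ returning to $(0,0)$). Every step of $\kS_{11}$ changes $y$ by $\pm 1$, with $+1$ only for \textbf{N}; hence the sequence of $y$-increments of any such excursion forms a Dyck word of length $2n$, contributing a factor $C_n$. The $x$-coordinate moves only on down-steps, by $0,+1,-1$ according as the step is \textbf{S}, \textbf{SE}, \textbf{SW}; restricted to the $n$ down-step positions, the $x$-trajectory must be a Motzkin excursion of length $n$, contributing a factor $M_n$. Crucially these two data are independent: the $y\geq 0$ condition depends only on the \textbf{N}/down pattern, while the $x\geq 0$ condition depends only on the choice in $\{\textbf{S},\textbf{SE},\textbf{SW}\}$ at down-step slots. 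Hence any Dyck/Motzkin pair reassembles uniquely to a legal excursion, giving
\[
 q_{11}(2n)\;\geq\;C_n\,M_n.
\]
Stirling yields $C_n\sim 4^n/\sqrt{\pi n^3}$ and $M_n\sim \tfrac{3\sqrt{3}}{2\sqrt{\pi n^3}}\,3^n$, so $C_nM_n\sim c\cdot 12^n/n^3$ and $\liminf_n q_{11}(2n)^{1/(2n)}\geq\sqrt{12}=2\sqrt{3}$. Since $q_{11}(2n+1)\geq q_{11}(2n)$ (append an \textbf{N}-step to any excursion), the same bound transfers to odd lengths.

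Combining the two bounds yields $\beta_{11}=2\sqrt{3}$. The main conceptual point is the independence of the Dyck/Motzkin decomposition; once that is established, the rest is a routine Banderier--Flajolet application for the upper bound and elementary asymptotics for the lower bound. I do not expect any significant obstacle, and the proof mirrors closely the template that the remaining negative-drift base cases (\ref{S13}, \ref{S15}) will follow, with $P(y)=y+\alpha/y$ for appropriate multiplicities $\alpha$ and a correspondingly adapted independent product decomposition.
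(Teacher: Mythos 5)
Your proof is correct and follows essentially the same route as the paper: bound above via the negative-drift half-plane relaxation (giving $P(\tau)=2\sqrt{3}$) and below via exact enumeration of excursions returning to the origin as the product $C_nM_n$, followed by Stirling and the squeeze. The only cosmetic difference is that you identify the $x$-increments at down-step slots directly as a Motzkin excursion, whereas the paper writes the same count as $C_n\sum_{k}\binom{n}{2k}C_k$ and then recognizes the sum as $M_n$.
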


\begin{proof}
Consider the walks on this step set which return to the origin. Notice that any walk returning to the $x$-axis must be of even length, since every step is either $y$-positive or $y$-negative. Thus, any walk returning to the origin must also be of even length. We begin by considering a walk of length $2n$ taken only on the \textbf{N} and \textbf{S} steps, returning to the origin. This is simply a projection of some Dyck path onto the $y$-axis, and these are counted by the Catalan numbers.

We replace some of the \textbf{S} steps by \textbf{SW} and \textbf{SE} steps. Since the walk must return to the origin, the walk on $\{\textbf{SW,~SE}\}$ must form a Dyck path with respect to the $y$-axis, which must also be of even length. These too are counted by the Catalan numbers. Moreover, we must choose some even number of the $y$-negative steps to replace, so there will be a summation of binomials also. Explicitly, the number of walks returning to the origin on $\kS_{11}$ is given by
\[
 q_{11}(0,0;2n) = C_n \sum_{k = 0}^{\lfloor n/2 \rfloor} \binom{n}{2k} C_k.
\]
The quantity
\[
 \sum_{k = 0}^{\lfloor n/2 \rfloor} \binom{n}{2k} C_k
\]
is an expression for the $n$th Motizkin number $M_n$, so we may rewrite the above as
\[
 q_{11}(0,0;2n) = C_nM_n.
\]

By applying Stirling's approximation, we know that for $C,D,r,s \in \RR$, we have
\[
 M_n \sim C 3^n n^r \mbox{ and } C_n \sim D 4^n n^s,
\]
so given some $0 < \epsilon < 1$, we know there exists some $N$ sufficiently large such that
\[
 1 - \epsilon \leq \frac{q(0,0;2n)}{CD(2\sqrt3)^{2n}n^{r+s}} \leq 1 + \epsilon
\]
for all $n > N$, that is, the exponential growth factor of $q_{11}(0,0;2n)$ is $2\sqrt3$.

Thus
\[
 q_{11}(0,0;2n) \bowtie (2\sqrt3)^{2n},
\]
and we may apply $\lim\frac1n\log$ to each term in the inequality
\[
 q_{11}(0,0;n) \leq q_{11}(n) \leq h_{11}(n),
\]
taking care to take the limit supremum on the left, to prove the result.
\end{proof}

For $\kS_{13} = \diagr{NE,NW,SE,SW,S}$, another negative drift step set, we look to the projection of the walk on to either axis, similar to the argument for $\kS_2$, but with a bit more analysis.

\begin{prop}\label{S13}
The counting sequence $q_{13}(n)$ for QP walks on $\kS_{13} = \diagr{NE,NW,SE,SW,S}$ has exponential growth factor~$2\sqrt6$.
\end{prop}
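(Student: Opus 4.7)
The plan is to squeeze $q_{13}(n)^{1/n}$ between matching upper and lower bounds, both equal to $2\sqrt{6}$.

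For the upper bound, every QP walk on $\kS_{13}$ is in particular a half-plane walk on $\kS_{13}$, so $q_{13}(n)\le h_{13}(n)$; the growth of $h_{13}(n)$ comes from Theorem~\ref{bnd} in the next subsection, which in turn rests on the directed-meander reduction of Proposition~\ref{htof} and the asymptotics of Theorem~\ref{dirasympt}. The horizontal projection of $\kS_{13}$ has directed inventory $P(y)=2y+3/y$, with negative drift $P'(1)=-1$. Solving $P'(\tau)=0$ gives the structural constant $\tau=\sqrt{3/2}$ and the structural minimum $P(\tau)=\sqrt{6}+\sqrt{6}=2\sqrt{6}$, so $h_{13}(n)\sim c\,(2\sqrt{6})^n n^{-3/2}$ and $\limsup_n q_{13}(n)^{1/n}\le 2\sqrt{6}$.

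For the lower bound, in direct analogy with the proof of Proposition~\ref{S11}, I would count QP walks returning to the origin, decomposing them by their $y$-projection. Such a walk of length $n=2m$ has a Dyck $y$-projection (one of $C_m$ shapes) together with a sequence of $x$-increments: $\pm 1$ at each of the $m$ up-slots (coming from $\textbf{NE}$ or $\textbf{NW}$) and $\pm 1$ or $0$ at each of the $m$ down-slots (coming from $\textbf{SE},\textbf{SW},\textbf{S}$). The unconstrained total over (shape, $x$-increments) pairs is exactly $C_m\cdot 6^m\sim 24^m/m^{3/2}$, which already matches the target $(2\sqrt{6})^{2m}=24^m$; it therefore suffices to show that a polynomially large fraction of these pairs keep the $x$-walk non-negative and return it to $0$.

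The chief obstacle, and the reason this case is more delicate than $\kS_{11}$, is the coupling of the two projections. For $\kS_{11}$ the analysis factored cleanly because the only $y$-up step $\textbf{N}$ has vanishing $x$-increment, reducing the $x$-problem to a Motzkin walk over the down-slots alone and yielding $q_{11}(0,0;2m)=C_m M_m$; here every step displaces $x$, so no such factorisation is available. My proposed remedy is a reflection / cycle-lemma argument using the $y$-axis symmetry of $\kS_{13}$ (interchanging $\textbf{NE}\leftrightarrow\textbf{NW}$ and $\textbf{SE}\leftrightarrow\textbf{SW}$): pairing each walk with its $y$-axis reflection shows that a constant fraction of the coupled walks never dip below $x=0$, and a standard ballot-type count for those ending at $x=0$ loses only a polynomial factor in $m$. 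Applying this to $q_{13}(0,0;n)\le q_{13}(n)\le h_{13}(n)$ and taking $\lim\tfrac{1}{n}\log$ of each term yields the desired equality $\beta_{13}=2\sqrt{6}$.
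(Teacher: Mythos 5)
Your upper bound is exactly the paper's: relax to the half-plane, take the horizontal projection $P(y)=2y+3/y$, check $P'(1)=-1<0$, find the structural constant $\tau=\sqrt{3/2}$ and $P(\tau)=2\sqrt{6}$. Your lower-bound plan (restrict to walks returning to an axis, decompose by the $y$-projection into a Dyck shape plus a choice of $x$-increments, and show a polynomially large fraction of the $C_m\cdot 6^m$ unconstrained assignments give a legal $x$-walk) is also the paper's strategy in spirit. But you have misdiagnosed the obstacle, and the remedy you sketch does not close the gap.

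First, the assertion that ``here every step displaces $x$'' is simply false: the step $\textbf{S}=(0,-1)\in\kS_{13}$ has zero $x$-increment. Second, and more importantly, the factorisation you say is unavailable \emph{is} available, for essentially the same reason it worked for $\kS_{11}$. Once a Dyck $y$-shape of half-length $m$ is fixed, the $x$-walk has $\pm1$ increments at the $m$ up-slots and $\{0,\pm1\}$ increments at the $m$ down-slots. Inserting zero increments into a $\pm1$ sequence does not change its running partial sums, so the $x$-walk is a meander iff the subsequence of non-zero increments is a Dyck prefix. Choosing $k$ of the $m$ down-slots to carry $\textbf{S}$ and filling the remaining $2m-k$ slots with a Dyck prefix gives
\[
 p_x(m)=\sum_{k=0}^{m}\binom{m}{k}d(2m-k),
\]
and this count is the same for \emph{every} Dyck $y$-shape: the set of valid $x$-assignments varies with the shape, but the cardinality does not, because it only depends on how many down-slots there are, which is always $m$. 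The paper therefore restricts only to walks that return to the $x$-axis (so the $x$-walk need merely be a prefix, not a loop), writes $\sum_i q_{13}(i,0;2m)=C_m\,p_x(m)$ cleanly, and proves $p_x(m)\bowtie(\sqrt6)^{2m}$ by the same kind of $\epsilon$-bookkeeping used in Lemmas~\ref{onestep} and~\ref{twostep}. Your reflection / cycle-lemma substitute is not a proof as stated: pairing a walk with its $y$-axis reflection does not, by itself, certify that one member of each pair is a meander, and for a symmetric bridge of length $2m$ the fraction staying non-negative is $\Theta(1/m)$, not a constant. A ballot-type estimate would eventually give a polynomially-small-but-positive fraction, which is all the squeeze actually needs, but the product formula you dismissed as unavailable is both available and cleaner.
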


\begin{proof}
As in $\kS_2$, we look at the projection of a typical walk on the $x$ and $y$ axes. Since all steps are either $y$-positive or negative, the projection onto the $y$ axis forms a Dyck prefix. Let us consider the projection onto the $x$-axis. There is one step with no horizontal component, and so the projection will form a Motzkin prefix, but with the stationary steps forced to correpsond to the downward steps in the projected Dyck prefix. Figure \ref{S13ex} shows an example of a walk returning to the $x$-axis with the steps in blue highlighting this correpsondence.

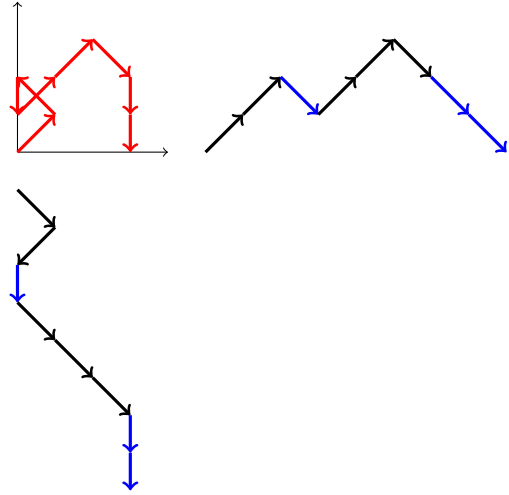
\begin{figure}
  \begin{center}
    \begin{tikzpicture}[scale = 0.5]
      \draw[->] (0,0) -- (4,0);
      \draw[->] (0,0) -- (0,4);
      \draw[->,very thick,red] (0,0) -- (1,1);
      \draw[->,very thick,red] (1,1) -- (0,2);
      \draw[->,very thick,red] (0,2) -- (0,1);
      \draw[->,very thick,red] (0,1) -- (1,2);
      \draw[->,very thick,red] (1,2) -- (2,3);
      \draw[->,very thick,red] (2,3) -- (3,2);
      \draw[->,very thick,red] (3,2) -- (3,1);
      \draw[->,very thick,red] (3,1) -- (3,0);
      \draw[->, very thick, black] (5,0) -- (6,1);
      \draw[->, very thick, black] (6,1) -- (7,2);
      \draw[->, very thick, blue] (7,2) -- (8,1);
      \draw[->, very thick, black] (8,1) -- (9,2);
      \draw[->, very thick, black] (9,2) -- (10,3);
      \draw[->, very thick, black] (10,3) -- (11,2);
      \draw[->, very thick, blue] (11,2) -- (12,1);
      \draw[->, very thick, blue] (12,1) -- (13,0);
      \draw[->, very thick, black] (0,-1) -- (1,-2);
      \draw[->, very thick, black] (1,-2) -- (0,-3);
      \draw[->, very thick, blue] (0,-3) -- (0,-4);
      \draw[->, very thick, black] (0,-4) -- (1,-5);
      \draw[->, very thick, black] (1,-5) -- (2,-6);
      \draw[->, very thick, black] (2,-6) -- (3,-7);
      \draw[->, very thick, blue] (3,-7) -- (3,-8);
      \draw[->, very thick, blue] (3,-8) -- (3,-9);
    \end{tikzpicture}
    \caption{A walk taken on $\kS_{13}$ shown with the projection onto each axis.}\label{S13ex}
  \end{center}

\end{figure}

In order to simplify computations, we consider the walks ending on the $x$-axis. Thus the projection onto the $y$-axis forms a Dyck path, something that we are quite familiar with counting: let the number of projections onto the $y$-axis be $p_y(n)$. Then $p_y(n) = C_n$. The projection onto the $x$-axis forms a Motzkin prefix, and as already mentioned, the `stationary' steps correspond only to $y$-negative steps in the original walk.

Now count the number of Motzkin prefixes with this restriction. The walks return to the $x$-axis, and since all steps are either $y$-positive or negative the walks must be of even length. Of the $n$ downward steps, $k$ are chosen to be the steps with no $x$ component. The remainder of the $2n$ steps must form a Dyck prefix, giving the formula
\[
 p_x(n) = \sum_{k = 0}^n \binom{n}{k} d(2n-k).
\]

From Chapter \ref{dir}, we know the asymptotic growth for Dyck prefixes is
\[
 d(2n-k) \sim C 2^{2n-k}(2n-k)^r,
\]
where $C$ is a positive real constant and $r \leq 0$. Thus for $0 < \epsilon <1$, there is some $N$ such that~$n > N$ implies
\begin{eqnarray*}
  \sum_{k=0}^N \binom{n}{k} d(2n-k) &+& \sum_{k = N+1}^n \binom{n}{k} (1 - \epsilon) C 2^{2n-k} (2n-k)^r \\
  && \leq p_x(n) \leq \sum_{k=0}^N \binom{n}{k} d(2n-k) + \sum_{k = N+1}^n \binom{n}{k} (1 + \epsilon) C 2^{2n-k} (2n-k)^r.
\end{eqnarray*}

Let's look at the lower bound first. We have
\begin{eqnarray*}
 p_x(n) 
  &\geq& \sum_{k=0}^N \binom{n}{k} d(2n-k) + \sum_{k = N+1}^n \binom{n}{k} (1 - \epsilon) C 2^{2n-k} (2n-k)^r, \\
  &\geq& \sum_{k=0}^N \binom{n}{k} d(2n-k) + (1 - \epsilon) C 2^{2n} (2n)^r \sum_{k = N+1}^n \binom{n}{k} \left(\frac12\right)^k \mbox{since } r \leq 0, \\
  &\geq& \sum_{k=0}^N \left( \binom{n}{k} d(2n-k) - C 2^{2n} (2n)^r \left(\frac12\right)^k \right) + (1 - \epsilon) C 2^{2n} (2n)^r \sum_{k = 0}^n \binom{n}{k} \left(\frac12\right)^k \\
  &\geq& \sum_{k=0}^N  \binom{n}{k} \left( d(2n-k) - C 2^{2n} (2n)^r \left(\frac12\right)^k \right) + (1 - \epsilon) C 2^{2n} (2n)^r \left(\frac32 \right)^n \mbox{binomial theorem,} \\
  &\geq& \sum_{k=0}^N \binom{n}{k}\left( d(2n-k) - C 2^{2n} (2n)^r \left(\frac12\right)^k \right) + (1 - \epsilon) C (\sqrt6)^{2n} (2n)^r.
\end{eqnarray*}

Now, the sum with upper bound of summation $N$ is the error term. Note that the constant upper limit of summation makes the growth as $n \rightarrow \infty$ of the positive portion polynomial, and the negative portion has its exponential growth dominated by $(\sqrt6)^{2n}$. Thus there exists some new $N^-$ such that $n > N^-$ forces
\[
 \sum_{k=0}^N \binom{n}{k}\left( d(2n-k) - C 2^{2n} (2n)^r \left(\frac12\right)^k \right) \leq \frac{\epsilon}2 C (\sqrt6)^{2n} (2n)^r,
\]
so for $n > N^-$
\[
 (1 - \epsilon/2) C (\sqrt6)^{2n} (2n)^r \leq p_x(n).
\]
The upper bound is similar, giving
\[
 p_x(n) \leq (1 - \epsilon) C (\sqrt{6})^{2n}
\]
for $n$ sufficiently large. This proves
\[
 p_x(n) \bowtie (\sqrt6)^{2n}.
\]

Recall that the number of walks returning to the $x$-axis on $\kS_{13}$ is the product
\[
 \sum_{i \geq 0} q_{13}(i,0;2n) = p_y(n)p_x(n).
\]
We already know that $p_y(n) \sim \frac{2^{2n}}{n^{3/2}\sqrt\pi}$, and we just showed that $p_x(n) \sim D (\sqrt6)^{2n} (2n)^{s}$ for some positive $D$ and $s \leq 0$, so for some $D'$ and $s'$ we have
\[
 \sum_{i \geq 0} q_{13}(i,0;2n) \sim D'(2\sqrt6)^{2n}(2n)^{s'}.
\]
Thus given the inequality
\[
 \sum_{i \geq 0} q_{13}(0,1;n) \leq q_{13}(n) \leq h_{13}(n),
\]
we know that for $n$ sufficiently large we substitute each quantities respective growth function, and working in the usual way (taking the $\limsup$ on the left) we find
\[
 \beta_{13} = 2\sqrt6.
\]
\end{proof}

This brings us to $\kS_{15} = \diagr{N,SW,SE}$. We find the exponential growth by considering walks returning to the origin.

\begin{prop}\label{S15}
 The exponential growth factor for the counting sequence $q_{15}(n)$ for QP walks on $\kS = \diagr{N,SW,SE}$ is $2\sqrt2$.
\end{prop}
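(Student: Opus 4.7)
The plan follows the template used for the other negative-drift base cases: count walks returning to the origin, show that this subset already attains the target exponential growth, and close the squeeze using a half-plane upper bound. First I would observe that any walk on $\kS_{15}$ returning to the origin has length divisible by $4$. Projecting onto the $y$-axis, $\textbf{N}$ contributes $+1$ while both $\textbf{SW}$ and $\textbf{SE}$ contribute $-1$, so a length-$n$ walk returning to $y=0$ has $n/2$ up-steps and $n/2$ down-steps; balancing the $x$-coordinate then forces those $n/2$ down-steps to split evenly between $\textbf{SW}$ and $\textbf{SE}$, so $4 \mid n$.

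Next I would decompose a walk of length $4n$ returning to the origin as a pair of independent Dyck paths. The $y$-projection is a Dyck path of length $4n$ (it is a $\pm 1$ walk of length $4n$ returning to $0$ and staying non-negative in $y$), of which there are $C_{2n}$. Given the $y$-projection, the positions of the $2n$ down-steps are fixed, and at each such step we must choose $\textbf{SW}$ or $\textbf{SE}$ so that $x$ stays non-negative and returns to $0$. Because $\textbf{N}$ leaves $x$ unchanged, the $x$-trajectory restricted to these $2n$ times is itself a $\pm 1$ walk of length $2n$ returning to $0$ and staying non-negative, i.e., a Dyck path of length $2n$; there are $C_n$ such choices, giving
\[
 q_{15}(0,0;4n) = C_{2n}\,C_n.
\]
Stirling's formula (as in Example \ref{stirlingcat}) then yields $q_{15}(0,0;4n) \sim \kappa \cdot 64^{n}/n^{3}$ for some positive constant $\kappa$, and since $(2\sqrt{2})^{4} = 64$, the length-$m$ subsequence with $4 \mid m$ already has exponential order $(2\sqrt{2})^{m}$.

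For the matching upper bound, I would invoke $q_{15}(n) \leq h_{15}(n)$. By Proposition \ref{htof}, half-plane walks on $\kS_{15}$ are equinumerous with directed meanders on the horizontal projection, which has inventory $P(y) = y + 2/y$. Since $P'(1) = -1 < 0$ we are in the negative-drift case of Theorem \ref{dirasympt}, with structural constant $\tau = \sqrt{2}$ and $P(\tau) = 2\sqrt{2}$, so $h_{15}(n) \bowtie (2\sqrt{2})^{n}$. Taking $\lim \frac{1}{n}\log$ of the squeeze $q_{15}(0,0;n) \leq q_{15}(n) \leq h_{15}(n)$, using the subsequence $n = 4m$ on the left (for $\liminf$) and the full sequence on the right (for $\limsup$), then forces $\beta_{15} = 2\sqrt{2}$. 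The main subtlety I expect is justifying the clean product formula $q_{15}(0,0;4n) = C_{2n}C_{n}$: one must verify that the quarter-plane constraint truly factors into two independent Dyck conditions on the $y$- and $x$-projections, which hinges on $\textbf{N}$ being $x$-stationary so that the $x$-walk is indexed only by the down-step times.
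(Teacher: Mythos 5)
Your proof is correct and follows essentially the same route as the paper: restrict to walks returning to the origin (length necessarily $4n$), decompose into the $y$-projection Dyck path ($C_{2n}$) and the $x$-walk indexed by down-step times ($C_n$), apply Stirling to get exponential order $(2\sqrt2)^{4n}$, and close the squeeze against the half-plane upper bound $h_{15}(n)$. Your justification of why the two Dyck conditions decouple (because $\textbf{N}$ is $x$-stationary) is in fact spelled out more carefully than in the paper's own somewhat terse version.
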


\begin{proof}
The walks returning to the origin are easy to count. Every walk which returns to the origin must be of length $4n$, since there must be the same number of $y$-positive steps as $y$-negative, and the same number of $x$-positive as $x$-negative. Now, if we look at the projection of a walk that returns to the origin onto the $y$-axis, the endpoint traces out a Dyck path of length $4n$. Then we look at the subwalk taken on the $x$-negative steps. This is a Dyck path of length $2n$. So, we have
\[
 q_{15}(0,0;4n) = \cC_{2n}\cC_n \sim C4^{2n}4^{n}(4n)^r = C(2\sqrt2)^{4n}(4n)^r
\]
for some $C \in \RR^+$ and $r \leq 0$. We then get
\[
 q_{15}(0,0;n) \leq q_{15}(n) \leq h_{15}(n),
\]
which we may work on in the same way as number $\kS_{11}$: by taking the $\limsup$ on the left, and getting
\[
 \beta_{15} = 2\sqrt2.
\]
\end{proof}

The next base case is $\kS_{19} = \{ \textbf{NE,~S,~W}\}$. This is a very famous model of a quarter plane walk, and is known as Kreweras' model, since he was the first to study it. We prove this factor along with the factor for $\kS_{20}$, since one step set is the reverse of the other.

\begin{prop}\label{S19}\label{S20}
\cite{Kr65} The exponential growth factor of the counting sequences $q_{19}(n)$ and $q_{20}(n)$, for QP walks on $\kS_{19} = \diagr{NE,S,W}$ and $\kS_{20} = \diagr{N,E,SW}$ respectively, is~$\beta_{19} = \beta_{20} = 3$.
\end{prop}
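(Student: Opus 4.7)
The plan is to establish $\beta_{19} = \beta_{20} = 3$ by the same squeezing scheme used throughout Section \ref{base}: an upper bound from a relaxation, and a lower bound from an easily counted subset. For the upper bound, both step sets have cardinality $3$, so $q_i(n) \leq |\kS_i|^n = 3^n$ simply because $\cQ_{\kS_i} \subseteq \cW_{\kS_i}$, giving $\beta_i \leq 3$ for $i \in \{19,20\}$ without even invoking the half plane bound from Theorem \ref{bnd}.

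For the lower bound on $\kS_{19}$, I would count the excursions, i.e., walks returning to the origin. These have length divisible by $3$, and their enumeration was the original achievement of \cite{Kr65}:
\[
 q_{19}(0,0;3n) = \frac{4^n}{(n+1)(2n+1)}\binom{3n}{n}.
\]
A routine application of Stirling's approximation as in Example \ref{stirlingcat} shows $q_{19}(0,0;3n) \sim \kappa \cdot 27^n / n^{5/2}$ for a positive constant $\kappa$, so $q_{19}(0,0;n) \bowtie 3^n$ via the subsequence $n = 3k$. The sandwich
\[
 q_{19}(0,0;n) \leq q_{19}(n) \leq 3^n
\]
followed by $\limsup \tfrac{1}{n}\log$ on each side then pins $\beta_{19} = 3$.

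For $\kS_{20}$, I would exploit an obvious reversal bijection rather than redo any counting. Since $\kS_{20}$ is precisely the set of negatives of the steps in $\kS_{19}$, reading any $\kS_{19}$-excursion backwards yields a $\kS_{20}$-excursion of the same length traversing the same set of lattice points; in particular the quarter plane condition is automatically preserved because a walk visits the same points as its reverse. This gives $q_{20}(0,0;n) = q_{19}(0,0;n)$, and the squeeze above transports verbatim to deliver $\beta_{20} = 3$.

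The main obstacle is really just importing the closed-form count of Kreweras' excursions; once that formula is in hand the Stirling asymptotic and the reversal bijection are both entirely routine. An avenue that avoids quoting \cite{Kr65} would be to inject a tractable subfamily of Kreweras walks (for example, by concatenating shorter excursions built from the two length-$3$ excursions $\textbf{NE}\,\textbf{S}\,\textbf{W}$ and $\textbf{NE}\,\textbf{W}\,\textbf{S}$), but this route gives only $\beta_{19} \geq 2^{1/3}\cdot\text{const}$, well below the true value, so leaning on the classical formula is the natural path.
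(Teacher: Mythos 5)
Your proof is correct and takes essentially the same route as the paper: both quote Kreweras' closed-form count for excursions on $\kS_{19}$, apply Stirling's approximation to extract the $3^{3n}$ growth, and dispose of $\kS_{20}$ by the reversal bijection between excursions. The only cosmetic difference is that you make the trivial upper bound $q_i(n) \le |\kS_i|^n$ explicit, whereas the paper leaves it implicit behind Theorem \ref{bnd}.
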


\begin{proof}
In 1965, Kreweras proved in \cite{Kr65} that the number of walks on $\kS_{19}$ of length $3n$, terminating at the origin, is given by
\[
 k_n = \frac{4^n}{(n+1)(2n+1)}\binom{3n}{n}.
\]
An application of Stirling's approximation tells us that the exponential growth of this quantity is $3^{3n}$, as desired. This provides a lower bound on the exponential growth of the unrestricted quarter plane model, giving the first part of the result.

The next base case to count is the step set $\kS_{20} = \{ \textbf{N,~E,~SW}\}$. Note that this is the previous case in reverse, so the number of walks returning to the origin of length $3n$ is $k_n$, which we just showed has an exponential growth factor $\beta_{20} = 3$, giving a lower bound on the quarter plane model and proving the second part of the result.
\end{proof}

The next base case, $\kS_{22} = \{ \textbf{E,~SE,~W,~NW}\}$ is an interesting case. It is known as the Gouyou-Beauchamps model. There are a few different ways to obtain the lower bound we need here. The proof is taken from \cite{BoMi10}.

\begin{prop}\label{S22}
\cite[Proposition 11]{BoMi10} The counting sequence $q_{22}(n)$ of QP walks on $\kS_{22} = \diagr{E,SE,W,NW}$ has exponential growth factor $\beta_{22} = 4$.
\end{prop}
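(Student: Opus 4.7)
The upper bound $\beta_{22}\le 4$ is immediate, since $q_{22}(n)\le |\kS_{22}|^n=4^n$; equivalently it follows from Theorem \ref{bnd} applied to the corresponding half-plane model.

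The strategy for the matching lower bound is to transfer the problem to the better-understood compass step set $\kS_1=\{\textbf{N,S,E,W}\}$. The key move is the $\ZZ^2$-linear change of coordinates $(u,v):=(x+y,y)$: this is a bijection from the quarter plane onto the wedge $W:=\{(u,v):u\ge v\ge 0\}$ that sends the four steps of $\kS_{22}$ onto the four compass steps, via $\textbf{E}\mapsto(1,0)$, $\textbf{SE}\mapsto(0,-1)$, $\textbf{W}\mapsto(-1,0)$, $\textbf{NW}\mapsto(0,1)$. Hence $q_{22}(n)$ equals the number of $\kS_1$-walks of length $n$ starting at the origin and confined to $W$.

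Next I would exploit the reflection $\sigma:(u,v)\mapsto(v,u)$, which is a symmetry of $\kS_1$ that fixes the diagonal $u=v$ pointwise and interchanges $W$ with its mirror $W':=\{v\ge u\ge 0\}$. For $\kS_1$-quarter-plane walks ending on the diagonal at $(k,k)$, each such walk either stays in $W$ or strictly crosses the diagonal into $W'$; in the latter case reflecting the suffix after the first crossing (iterating via the Gessel--Zeilberger reflection principle for walks in a type $B_2$ Weyl chamber whenever the reflected walk itself re-enters $W'$) produces an injection of crossing walks into walks staying in $W$. Summing over $k$ and combining with the symmetry $|W$-walks to $(k,k)|=|W'$-walks to $(k,k)|$ yields
\[
q_{22}(n)\;\ge\;\tfrac12 D(n),
\]
where $D(n)$ denotes the number of $\kS_1$-quarter-plane walks of length $n$ ending on the diagonal.

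To finish I would lower-bound $D(n)\ge c\,4^n/n^{\alpha}$ for some $\alpha>0$. Using the asymptotic $q_1(n)\sim(4/\pi)\,4^n/n$ from Table \ref{fintab} together with the Gaussian-type concentration of $\kS_1$-walk endpoints at scale $\sqrt{n}$, a polynomial fraction of these endpoints lies on the diagonal, giving $D(n)=\Theta(4^n/n^2)$; an elementary proof simply decomposes a $\kS_1$-walk into its independent $\textbf{E}/\textbf{W}$ and $\textbf{N}/\textbf{S}$ sub-walks and invokes Stirling. Combining, $q_{22}(n)\ge c'\,4^n/n^2$, whence $\beta_{22}\ge 4$ and therefore $\beta_{22}=4$. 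The main obstacle is cleanly executing the iterated reflection step: handling walks that touch but never strictly cross $u=v$, and ensuring that each reflected suffix still lies in the quarter plane. If this turns out to be awkward, an elegant alternative is the Lindstr\"om--Gessel--Viennot construction, which bijects wedge walks with pairs of non-intersecting $\kS_1$ half-plane walks and realises $q_{22}(n)$ as a $2\times 2$ determinant of half-plane counts that are themselves of order $4^n/\sqrt{n}$.
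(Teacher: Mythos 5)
Your upper bound and your initial coordinate change $(u,v)=(x+y,y)$ are exactly the paper's: this is precisely the transformation $(i,j)\mapsto(i+j,j)$ that the paper uses to identify $\cQ_{\kS_{22}}$ with $\kS_1$-walks confined to the wedge $\{u\ge v\ge 0\}$. Where you diverge is in the lower bound. The paper finishes immediately by citing Gouyou-Beauchamps' closed-form count of walks returning to the origin,
\[
q(0,0;2m)=\frac{6(2m)!(2m+2)!}{m!(m+1)!(m+2)!(m+3)!}\sim\frac{24\cdot 4^{2m}}{\pi m^5},
\]
which already exhibits exponential growth $4$; this plays the same role as the Kreweras formula in Proposition \ref{S19} and the Dyck/Motzkin formula in Proposition \ref{S11}, namely a closed-form count of a restriction (excursions to the origin) that is easy to analyse by Stirling. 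No reflection argument is needed.

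Your attempt to avoid the external formula is where the gaps appear. The claim that iterated reflection of the suffix after the first strict crossing injects ``crossing'' walks into $W$-confined walks is asserted, not proved: the walk starts on the diagonal, which is fixed by the reflection, so the standard one-reflection principle does not apply cleanly, and the proposed iteration is neither shown to terminate on a well-defined output nor to be injective. You would also need the image to consist of walks that end on the diagonal and stay weakly in $W$, which you do not verify. In particular, the inequality $q_{22}(n)\ge\tfrac12 D(n)$ is not established. The LGV fallback is also not yet a proof as written: the paper does mention that the model is in bijection with pairs of non-intersecting Dyck prefixes counted by a $2\times 2$ Gessel--Viennot determinant, but knowing the entries are each $\Theta(4^n/\sqrt n)$ does not preclude cancellation in the determinant down to a lower exponential order; one must actually estimate the determinant (and doing so carefully is essentially how Gouyou-Beauchamps' formula is derived). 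So the structure of your argument is reasonable and the coordinate change is the right starting point, but as written the lower bound is a sketch with a genuine hole, whereas the paper's proof is complete by citation.
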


\begin{proof}
First of all, we notice a bijection between this model and pairs of non-intersecting Dyck prefixes. To pass from a pair to a quadrant walk, we must assign a direction to each possible pair of steps. Hence, the number of walks ending at a given position $(i,j)$ is given as a 2-by-2 Gessel-Viennot determinant, and we can expect a closed form expression for this number.

Next, note that we may make the transformation $(i,j) \mapsto (i+j,j)$ to take quarter plane walks injectively to walks in $\kS_1 = \{ \textbf{N,~S,~E,~W}\}$ lying between the $x$-axis and the line~$y = x$. This is the form they were studied in by Gouyou-Beauchamps, who provided a formula for the number of $n$ step walks ending on the $x$-axis as a product of Catalan numbers in~\cite{Go86}. This interest grew from a far from obvious bijection between these walks and Young tableuax of height at most 4. The Gouyou-Beauchamps closed form expression for walks ending at the origin is
\[
 q(0,0;2m) = \frac{6(2m)!(2m + 2)!}{m!(m+1)!(m+2)!(m+3)!} \sim \frac{24 \cdot 4^{2m}}{\pi m^5}.
\]
This has the desired exponential growth, and so may be used as a lower bound to prove~$\beta_{22}=4$.
\end{proof}

This concludes the proof of the exponential growth of the base cases. In the following section, we show how to tightly bound above the exponential growth of each step set by relaxing the regional constraints on the class.

\subsection{Upper bounds on the exponential growth}\label{ubs}

We wish to use the $\lim\frac1n\log$ technique to squeeze the exponential growth of the counting sequences for each $\kS_i$. In this section, the main result is the following.

\begin{thm}\label{bnd1}
(Upper bound) For $i = 1,...,23$, the exponential growth factor for the number of walks taken on $\kS_i$ is bounded above by the value $\beta_i$ given in Table \ref{summ}.
\end{thm}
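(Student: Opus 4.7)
The plan is to use the containment $\cQ_{\kS_i} \subseteq \cH_{\kS_i}$ as the single source of the upper bound: every quarter plane walk is in particular an upper half plane walk, so $q_i(n) \leq h_i(n)$ for each $i$. This reduces the problem to controlling the exponential growth of the half plane counts, a task for which the machinery of Chapters~\ref{dir} and~\ref{hpwalks} is already in place.

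The key step is the bijection of Proposition~\ref{htof}: for any small step set $\kS$, the class $\cH_\kS$ is equinumerous with the class $\cF_{\kS_P}$ of directed meanders on the horizontal projection, whose directed inventory is $P_i(y) := S_i(1,y)$. Thus $h_i(n) = f_{\kS_{i,P}}(n)$, and Theorem~\ref{dirasympt} supplies an exact asymptotic formula. Specifically, the exponential growth of $f_{\kS_{i,P}}(n)$ is
\begin{equation*}
  \begin{cases} P_i(1)^n & \text{if } P_i'(1) \geq 0, \\ P_i(\tau_i)^n & \text{if } P_i'(1) < 0, \end{cases}
\end{equation*}
where $\tau_i > 0$ is the structural constant characterised by $P_i'(\tau_i) = 0$. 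The subexponential factor is at worst of order $n^{-1/2}$ (and polynomial in all cases), so it contributes nothing after applying $\limsup_{n\to\infty} \tfrac{1}{n}\log(\cdot)$.

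The remaining work is a case-by-case verification, whose recipe is uniform: for each $i \in \{1,\dots,23\}$, write down $P_i(y)$ from $\kS_i$, compute $P_i'(1)$ to read off the sign of the vertical drift (which is exactly the $y$-component indicated in the $\delta_i$ column of Table~\ref{summ}), and then evaluate either $P_i(1) = |\kS_i|$ in the non-negative drift case or $P_i(\tau_i)$ in the negative drift case. For example, $\kS_{11} = \diagr{N,S,SW,SE}$ gives $P_{11}(y) = y + 3/y$, so $P_{11}'(1) = -2 < 0$, $\tau_{11} = \sqrt{3}$, and $P_{11}(\tau_{11}) = 2\sqrt{3}$; while $\kS_5 = \diagr{NE,NW,S}$ gives $P_5(y) = 2y + 1/y$, $P_5'(1) = 1 > 0$, so the bound is $P_5(1) = 3$. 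In every row one obtains precisely the value $\beta_i$ listed in Table~\ref{summ}.

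I expect no serious obstacle: the conclusion is a routine application of $\limsup \tfrac{1}{n} \log$ to $q_i(n) \leq h_i(n)$ combined with the explicit asymptotics from Theorem~\ref{dirasympt}. The only care needed is to ensure that the half plane bound is actually tight, which is exactly what the matching lower bounds from Section~\ref{lbs} (via Propositions~\ref{S02}--\ref{S22} and the boot-strap Lemmas~\ref{onestep} and~\ref{twostep}) will later supply. A uniform table of $(P_i, P_i'(1), \tau_i, P_i(\tau_i))$ will make the verification transparent and will also double as the bookkeeping needed to cross-check the table of growth factors against Bostan and Kauers' experimental values in Table~\ref{fintab}.
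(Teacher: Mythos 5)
Your proposal takes essentially the same route as the paper: bound $q_i(n) \leq h_i(n)$ via the containment $\cQ_{\kS_i} \subseteq \cH_{\kS_i}$, pass to the directed model through Proposition~\ref{htof}, and apply Theorem~\ref{dirasympt} with a drift-based case split to read off $P_i(1)$ or $P_i(\tau_i)$ as the exponential growth, then isolate it with $\lim\tfrac1n\log$. The paper does precisely this, works the identical $\kS_{11}$ computation as Example~\ref{ubex}, and records the conclusion in the refined form of Theorem~\ref{bnd}, leaving the remaining cases as routine verifications.
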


Now, we know intuitively (and we proved in Section \ref{qpclass}) that since the number of walks on a step set $\kS$ cannot exceed $|\kS|^n$, if $\beta$ is the exponential growth factor for the number of walks on $\kS$, we have
\[
 \beta \leq |\kS|.
\]
It turns out that for most of the 23 D-finite cases this bound is tight. We were intrigued by the fact that for those~$\kS_i$ with $y$-negative drift, this bound is too large. This is because bounding by the number of steps is the same as removing both boundaries, and considering $\cQ_\kS$ as a subset of~$\cW_{\kS}$, and the enumeration of $\cW_{\kS}$ doesn't depend on the drift, and as can be seen from Chapters~\ref{dir} and \ref{hpwalks}, when a boundary is introduced drift influences the enumerative properties of a model.

As an alternative bound, we consider removing only one boundary, as shown in Figure \ref{rembnd}. Note that for $i \in \{1,...,23\}$,~if the drift is non-zero it is in the $y$-direction only and therefore will be preserved by a horizontal projection. Thus, considering $\cQ_\kS$ as a subset of $\cH_\kS$ will give drift dependent upper bounds on the exponential growth.

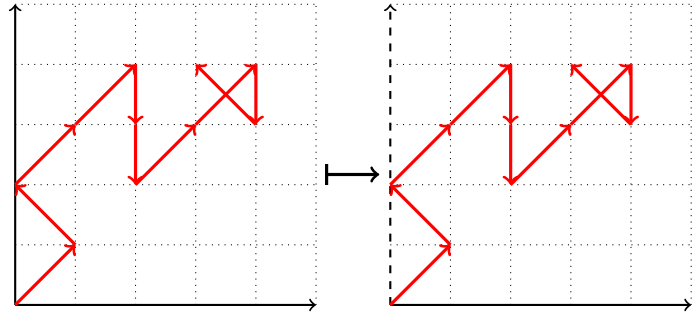
\begin{figure}
\begin{center}
  \begin{tikzpicture}[scale = 0.8]
    \draw[->, thick, black] (0,0) -- (5,0);
    \draw[->, thick, black] (0,0) -- (0,5);
    \draw[-, dotted, black] (1,0) -- (1,5);
    \draw[-, dotted, black] (2,0) -- (2,5);
    \draw[-, dotted, black] (3,0) -- (3,5);
    \draw[-, dotted, black] (4,0) -- (4,5);
    \draw[-, dotted, black] (5,0) -- (5,5);
    \draw[-, dotted, black] (0,1) -- (5,1);
    \draw[-, dotted, black] (0,2) -- (5,2);
    \draw[-, dotted, black] (0,3) -- (5,3);
    \draw[-, dotted, black] (0,4) -- (5,4);
    \draw[-, dotted, black] (0,5) -- (5,5);
    \draw[->, very thick, red] (0,0) -- (1,1);
    \draw[->, very thick, red] (1,1) -- (0,2);
    \draw[->, very thick, red] (0,2) -- (1,3);
    \draw[->, very thick, red] (1,3) -- (2,4);
    \draw[->, very thick, red] (2,4) -- (2,3);
    \draw[->, very thick, red] (2,3) -- (2,2);
    \draw[->, very thick, red] (2,2) -- (3,3);
    \draw[->, very thick, red] (3,3) -- (4,4);
    \draw[->, very thick, red] (4,4) -- (4,3);
    \draw[->, very thick, red] (4,3) -- (3,4);
  \end{tikzpicture}
  \begin{tikzpicture}[scale = 0.7]
      \draw[-,very thick,black] (0,-0.2) -- (0,0.2);
      \draw[-,white] (0.5,-0.1) -- (0.5,-2.5);
      \draw[->,very thick,black] (0,0) -- (1,0);
  \end{tikzpicture}
  \begin{tikzpicture}[scale = 0.8]
    \draw[->, thick, black] (0,0) -- (5,0);
    \draw[->, thick, dashed, black] (0,0) -- (0,5);
    \draw[-, dotted, black] (1,0) -- (1,5);
    \draw[-, dotted, black] (2,0) -- (2,5);
    \draw[-, dotted, black] (3,0) -- (3,5);
    \draw[-, dotted, black] (4,0) -- (4,5);
    \draw[-, dotted, black] (5,0) -- (5,5);
    \draw[-, dotted, black] (0,1) -- (5,1);
    \draw[-, dotted, black] (0,2) -- (5,2);
    \draw[-, dotted, black] (0,3) -- (5,3);
    \draw[-, dotted, black] (0,4) -- (5,4);
    \draw[-, dotted, black] (0,5) -- (5,5);
    \draw[->, very thick, red] (0,0) -- (1,1);
    \draw[->, very thick, red] (1,1) -- (0,2);
    \draw[->, very thick, red] (0,2) -- (1,3);
    \draw[->, very thick, red] (1,3) -- (2,4);
    \draw[->, very thick, red] (2,4) -- (2,3);
    \draw[->, very thick, red] (2,3) -- (2,2);
    \draw[->, very thick, red] (2,2) -- (3,3);
    \draw[->, very thick, red] (3,3) -- (4,4);
    \draw[->, very thick, red] (4,4) -- (4,3);
    \draw[->, very thick, red] (4,3) -- (3,4);
  \end{tikzpicture}
  \caption{Removing the $y$-axis as a boundary to produce an upper bound on the exponential growth}\label{rembnd}
\end{center}
\end{figure}

For a given step set $\kS_i$, viewing a quarter plane walk as a restricted half plane walk will give the inequality
\[
 q(n) \leq h(n),
\]
where $q(n)$ and $h(n)$ count quarter and half plane walks, respectively, of length $n$. Then, for the right $C,D,\beta,\gamma,r,s$, we have
\[
 q(n) \sim C \beta^n n^s \mbox{ and } h(n) \sim D \gamma^n n^r,
\]
and for sufficiently large $n$ we will have
\[
 C \beta^n n^s \leq D \gamma^n n^r.
\]
Once again, taking $\lim \frac1n \log$ on both sides will yield
\[
 \log \beta \leq \log \gamma \Rightarrow \beta \leq \gamma.
\]
Now recall that the exponential growth factor for half-plane walks with zero or positive drift is equal to the cardinality of the step set, so this bound is as good as the previous one for these cases. Also, we expect it to improve in the negative drift cases since for the class~$\cH_\kS$ the drift affects the asymptotic growth. Let us consider an example.

\begin{ex}\label{ubex}
We showed in Proposition \ref{S11} that $\kS_{11} = \diagr{N,S,SE,SW}$ has a lower bound of~$2\sqrt{3}$ on the exponential growth, and appealed to an upper bound given by the half plane walks on the same step set. Here we produce this upper bound by appealing to the theorems on directed walks from Chapter \ref{dir}, applied through the bijection given in Chapter \ref{hpwalks} to half plane walks.

First, we represent $\kS_{11}$ by its inventory
\[
 S(x,y) = y + \frac{x}{y} + \frac1y + \frac{1}{xy},
\]
and take the horizontal projection by setting $x = 1$, taking the directed inventory. This gives us the weighted inventory for the directed meander on the horizontal projection $\cP$,
\[
 P(y) = y + \frac3y.
\]
We find the drift to be negative:
\[
 P'(y) = 1 - \frac3{y^2} \Rightarrow P'(1) = -2 < 0;
\]
so Theorem \ref{dirasympt} tells us that the exponential growth factor of $h_{11}(n)$ is equal to $P(\tau)$, where~$\tau$ is the positive solution to the equation
\[
 P'(\tau) = 0.
\]
So, we let
\[
 1 - \frac3{y^2} = 0
\]
which is easily solved for
\[
 \tau = \sqrt3.
\]
Plugging this back into $P(y)$ gives
\[
 \gamma = P(\tau) = 2\sqrt3,
\]
which is tight according to Proposition \ref{S11}.
\end{ex}

So in Example \ref{ubex} using the relaxation to a half plane model, we have found an upper bound for $\beta_{11}$ which is an improvement on the upper bound provided by the number of steps. It is an elementary application of the method given in Example \ref{ubex} to the remaining~5 negative drift cases to verify that it is a tight upper bound in all cases. This leads to the following result, a restatement of Theorem~\ref{bnd1} from the beginning of Section \ref{ubs} using a little more precision.

\begin{thm}\label{bnd}
The exponential growth factor of a quarter plane walk with a set of small steps $\kS_i$, $i=1,2,3,...,23$, is bounded above by
\begin{enumerate}
  \item the number of steps for a walk with non-negative drift,
  \item the quantity $P(\tau)$ for a walk with negative drift,
\end{enumerate}
where $P(y) = S(1,y)$ is the inventory of the horizontal projection of $\kS_i$ and $\tau$ is the structural constant of the half plane model.
\end{thm}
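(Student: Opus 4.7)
The plan is to exploit the containment $\cQ_{\kS_i} \subseteq \cH_{\kS_i}$: every quarter plane walk is in particular a half plane walk on the same step set, so
\[
  q_i(n) \leq h_i(n),
\]
where $h_i(n)$ counts half plane walks of length $n$ on $\kS_i$. By the exponential growth formula (Theorem \ref{expgrow}), passing to $\limsup |a_n|^{1/n}$ preserves the inequality, so the exponential growth factor $\beta_i$ of $q_i(n)$ is bounded above by the exponential growth factor of $h_i(n)$. This reduces the problem to computing exponential growth factors for half plane walks.

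Next, I would invoke Proposition \ref{htof}, which gives the bijection $\cH_{\kS_i} \cong \cF_{\kS_{i,P}}$ between half plane walks on $\kS_i$ and directed meanders on the horizontal projection $\kS_{i,P}$, where the projected inventory is $P(y) = S(1,y)$. Thus $h_i(n)$ equals the number of directed meanders on $\kS_{i,P}$ of length $n$, and Theorem \ref{dirasympt} supplies the precise asymptotics according to the sign of the drift $\delta = P'(1)$.

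Case analysis on drift then finishes the bound. When $\delta \geq 0$ (zero or positive drift), Theorem \ref{dirasympt} gives
\[
  h_i(n) \sim \kappa\, P(1)^n \theta(n),
\]
with $\theta(n)$ subexponential; since $P(1) = S(1,1) = |\kS_i|$ (the inventory evaluated at $x=y=1$ is the step count), the exponential growth of $h_i(n)$ is exactly $|\kS_i|$, proving part (1). When $\delta < 0$, Theorem \ref{dirasympt} instead yields
\[
  h_i(n) \sim \nu_0^{\pm}\, \frac{P(\tau)^n}{2\sqrt{\pi n^3}}\bigl(1 + O(1/n)\bigr),
\]
so the exponential growth of $h_i(n)$ is $P(\tau)$, proving part (2). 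Here we are using the observation already highlighted in the text: for the $23$ cases of interest, any nonzero drift is purely vertical, so the horizontal projection preserves the sign of the drift and $P'(1)$ has the same sign as the $y$-component of the drift vector of $\kS_i$.

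The proof has essentially no hard step; the only subtlety is checking that the drift sign is preserved under horizontal projection, which follows from the fact that $P'(1) = \partial_y S(x,y)|_{x=y=1}$ is precisely the $y$-coordinate of the vector sum of the steps. The example of $\kS_{11}$ already worked out in Example \ref{ubex} serves as a template: set up $P(y)$, solve $P'(\tau) = 0$ on $\tau > 0$, and read off $P(\tau)$. Carrying out this computation for each of the six negative drift models $\kS_{11},\ldots,\kS_{16}$ and observing that the non-negative drift cases give $P(1) = |\kS_i|$ completes the verification that the upper bounds agree with the values $\beta_i$ listed in Table \ref{summ}.
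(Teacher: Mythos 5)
Your proof is correct and takes essentially the same route as the paper, which simply declares Theorem \ref{bnd} to be ``an application of the method of Example \ref{ubex} in each of the cases'' and omits the details. You have fleshed out exactly that argument: the containment $\cQ_{\kS_i} \subseteq \cH_{\kS_i}$, the bijection from Proposition \ref{htof}, the drift case analysis via Theorem \ref{dirasympt}, and the observation that $P(1) = |\kS_i|$ and that horizontal projection preserves the sign of the (purely vertical) drift in these cases.
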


The proof of Theorem \ref{bnd} is simply an application of the method of Example \ref{ubex} in each of the cases and so is omitted. Now that we have a method for providing upper bounds which agree with the findings of Bostan and Kauers in all cases, we move on to finding lower bounds. This is a little more complicated than the upper bounds, and is detailed in the following section.

\subsection{Lower bounds on the exponential growth}\label{lbs}

In this section, we show how to find lower bounds on the exponential growth for the given models. In some cases, we are able to use a restriction, such as the walks returning to the origin or those returning to an axis, but this doesn't work for all cases. We are then forced to construct lower bounds from step subsets that we know the exponential growth of. We illustrate the intuition of this in the following example.

\begin{ex}\label{int}
Take $\kS_5 = \diagr{NE,S,NW}$, which we shall call the positive drift fork. Notice that if we take any walk of a given length on $\kS_5$ and insert a north step at any internal point, the remainder of the walk moves no closer to the $y$-axis and further from the $x$ axis, giving us a walk from $\kS_7 = \diagr{NE,NW,N,S}$. If $q_5(i)$ is the number of walks of length $i$ on $\kS_5$, and $q_5(i) \sim C3^ii^s$, we can build walks of length $n$ in~$\kS_7$ by taking walks of length $i$ on $\kS_5$ and inserting $n-i$ north steps. See Figure \ref{onefig} for an example, where we insert $\textbf{N}$ steps at the positions marked by a blue circle. The number of walks on $\kS_7$ built in such a way is
\[
 q'_7(n) = \sum_{i \geq 0} \binom{n}{i} q_5(i),
\]
where we denote it by $q'$ since in general, this method might not give all possible walks on the larger step set. Now, intuitively, since $q_5(n)$ has dominant growth $3^n$, we would like to get $q'_7(n)$ with dominant growth $4^n$ by applying the binomial theorem, like so
\[
 q'_7(n) \sim \sum_{i \geq 0} \binom{n}{i} 3^i = 4^n.
\]

\begin{figure}[h]
  \begin{center}
    \begin{tikzpicture}
      \draw[->,very thick,black] (0,0) -- (5,0);
      \draw[->,very thick,black] (0,0) -- (0,5);
      \draw[-,dotted,black] (0,1) -- (5,1);
      \draw[-,dotted,black] (0,2) -- (5,2);
      \draw[-,dotted,black] (0,3) -- (5,3);
      \draw[-,dotted,black] (0,4) -- (5,4);
      \draw[-,dotted,black] (0,5) -- (5,5);
      \draw[-,dotted,black] (1,0) -- (1,5);
      \draw[-,dotted,black] (2,0) -- (2,5);
      \draw[-,dotted,black] (3,0) -- (3,5);
      \draw[-,dotted,black] (4,0) -- (4,5);
      \draw[-,dotted,black] (5,0) -- (5,5);
      \draw[blue] (0,0) circle (4pt);
      \draw[blue] (1,2) circle (4pt);
      \draw[->,very thick,red] (0,0) -- (1,1);
      \draw[->,very thick,red] (1,1) -- (2,2);
      \draw[->,very thick,red] (2,2) -- (2,1);
      \draw[->,very thick,red] (2,1) -- (1,2);
      \draw[->,very thick,red] (1,2) -- (0,3);
      \draw[->,very thick,red] (0,3) -- (0,2);
    \end{tikzpicture}
    \begin{tikzpicture}
      \draw[-,very thick,black] (0,-0.2) -- (0,0.2);
      \draw[-,white] (0.5,-0.1) -- (0.5,-2.5);
      \draw[->,very thick,black] (0,0) -- (1,0);
    \end{tikzpicture}
    \begin{tikzpicture}
      \draw[->,very thick,black] (0,0) -- (5,0);
      \draw[->,very thick,black] (0,0) -- (0,5);
      \draw[-,dotted,black] (0,1) -- (5,1);
      \draw[-,dotted,black] (0,2) -- (5,2);
      \draw[-,dotted,black] (0,3) -- (5,3);
      \draw[-,dotted,black] (0,4) -- (5,4);
      \draw[-,dotted,black] (0,5) -- (5,5);
      \draw[-,dotted,black] (1,0) -- (1,5);
      \draw[-,dotted,black] (2,0) -- (2,5);
      \draw[-,dotted,black] (3,0) -- (3,5);
      \draw[-,dotted,black] (4,0) -- (4,5);
      \draw[-,dotted,black] (5,0) -- (5,5);
      \draw[->,very thick,blue] (0,0) -- (0,1);
      \draw[->,very thick,red] (0,1) -- (1,2);
      \draw[->,very thick,red] (1,2) -- (2,3);
      \draw[->,very thick,red] (2,3) -- (2,2);
      \draw[->,very thick,red] (2,2) -- (1,3);
      \draw[->,very thick,blue] (1,3) -- (1,4);
      \draw[->,very thick,red] (1,4) -- (0,5);
      \draw[->,very thick,red] (0,5) -- (0,4);
      \draw[-,white] (0,-0.1) -- (0,-0.15);
    \end{tikzpicture}
    \caption{Inserting north steps at points of a walk on $\kS_5$ to create a walk on $\kS_7$.}\label{onefig}
 \end{center}
\end{figure}
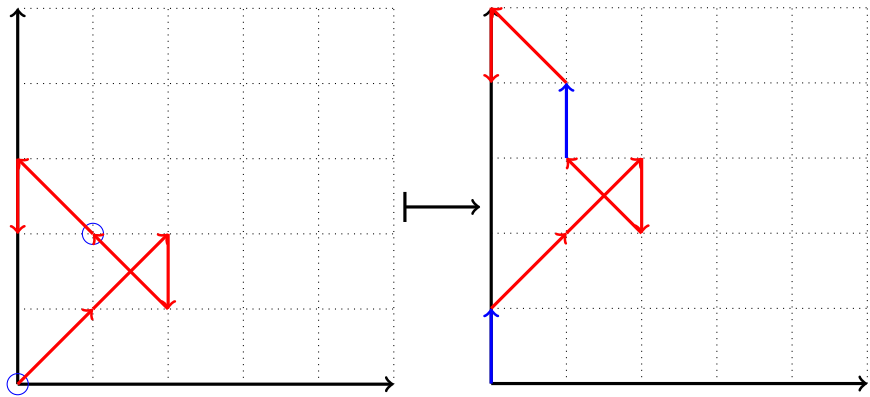

Furthermore, we could take $\kS_7$, the trident, and increase the number of steps by two, inserting a pair of steps in opposite directions. The only thing we need to make it work is that the subwalk taken on the new steps is a meander with respect to the relevant axes. If~$d(i)$ is the number of Dyck prefixes of length $i$, and we are adding the steps $\{\textbf{E,W}\}$ to the step set $\kS_7$, we get $q''_8(n)$, the number of legal walks from $\kS_8 = \{\textbf{N,~NE,~E,~S,~W,~NW}\}$ built in this way, counted by
\[
 q''_8(n) = \sum_{i \geq 0} \binom{n}{i} q_7(i)d(n-i).
\]
Since $q_7(n)$ has exponential growth $4^n$ and $d(n)$ has exponential growth $2^n$, we'd like to replace the functions with their dominant growth and apply the binomial theorem, as before, to get a set with dominant growth $6^n$. Again, doing this requires some care. Figure \ref{twofig} shows an example of this process, where we insert horizontal steps at the positions marked by a blue circle in the graphic on the left. Note that the subwalk taken on the blue steps is a meander.

\begin{figure}[h]
  \begin{center}
    \begin{tikzpicture}
      \draw[->,very thick,black] (0,0) -- (5,0);
      \draw[->,very thick,black] (0,0) -- (0,5);
      \draw[-,dotted,black] (0,1) -- (5,1);
      \draw[-,dotted,black] (0,2) -- (5,2);
      \draw[-,dotted,black] (0,3) -- (5,3);
      \draw[-,dotted,black] (0,4) -- (5,4);
      \draw[-,dotted,black] (0,5) -- (5,5);
      \draw[-,dotted,black] (1,0) -- (1,5);
      \draw[-,dotted,black] (2,0) -- (2,5);
      \draw[-,dotted,black] (3,0) -- (3,5);
      \draw[-,dotted,black] (4,0) -- (4,5);
      \draw[-,dotted,black] (5,0) -- (5,5);
      \draw[blue] (0,0) circle (4pt);
      \draw[blue] (0,1) circle (4pt);
      \draw[blue] (2,3) circle (4pt);
      \draw[blue] (0,5) circle (4pt);
      \draw[blue] (1,4) circle (4pt);
      \draw[blue] (2,2) circle (4pt);
      \draw[->,very thick,red] (0,0) -- (0,1);
      \draw[->,very thick,red] (0,1) -- (1,2);
      \draw[->,very thick,red] (1,2) -- (2,3);
      \draw[->,very thick,red] (2,3) -- (2,2);
      \draw[->,very thick,red] (2,2) -- (1,3);
      \draw[->,very thick,red] (1,3) -- (1,4);
      \draw[->,very thick,red] (1,4) -- (0,5);
      \draw[->,very thick,red] (0,5) -- (0,4);
    \end{tikzpicture}
    \begin{tikzpicture}
      \draw[-,very thick,black] (0,-0.2) -- (0,0.2);
      \draw[-,white] (0.5,-0.1) -- (0.5,-2.5);
      \draw[->,very thick,black] (0,0) -- (1,0);
    \end{tikzpicture}
    \begin{tikzpicture}
      \draw[->,very thick,black] (0,0) -- (5,0);
      \draw[->,very thick,black] (0,0) -- (0,5);
      \draw[-,dotted,black] (0,1) -- (5,1);
      \draw[-,dotted,black] (0,2) -- (5,2);
      \draw[-,dotted,black] (0,3) -- (5,3);
      \draw[-,dotted,black] (0,4) -- (5,4);
      \draw[-,dotted,black] (0,5) -- (5,5);
      \draw[-,dotted,black] (1,0) -- (1,5);
      \draw[-,dotted,black] (2,0) -- (2,5);
      \draw[-,dotted,black] (3,0) -- (3,5);
      \draw[-,dotted,black] (4,0) -- (4,5);
      \draw[-,dotted,black] (5,0) -- (5,5);
      \draw[->,very thick,blue] (0,0) -- (1,0);
      \draw[->,very thick,blue] (1,0) -- (2,0);
      \draw[->,very thick,red] (2,0) -- (2,1);
      \draw[->,very thick,blue] (2,1) -- (1,1);
      \draw[->,very thick,red] (1,1) -- (2,2);
      \draw[->,very thick,red] (2,2) -- (3,3);
      \draw[->,very thick,blue] (3,3) -- (4,3);
      \draw[->,very thick,red] (4,3) -- (4,2);
      \draw[->,very thick,blue] (4,2) -- (3,2);
      \draw[->,very thick,red] (3,2) -- (2,3);
      \draw[->,very thick,red] (2,3) -- (2,4);
      \draw[->,very thick,blue] (2,4) -- (3,4);
      \draw[->,very thick,blue] (3,4) -- (4,4);
      \draw[->,very thick,red] (4,4) -- (3,5);
      \draw[->,very thick,blue] (3,5) -- (2,5);
      \draw[->,very thick,blue] (2,5) -- (1,5);
      \draw[->,very thick,red] (1,5) -- (1,4);
      \draw[-,white] (0,-0.1) -- (0,-0.15);
    \end{tikzpicture}
    \caption{Inserting a meander on $\{\textbf{E,~W}\}$ into a walk from $\kS_7$ to obtain a walk on $\kS_8$.}\label{twofig}
  \end{center}

\end{figure}

\end{ex}

The intuitive idea has merit, but the asymptotic expressions contain a subexponential factor. We show the details in Lemmas \ref{onestep} and \ref{twostep}. The following section outlines how restrictions on the stopping conditions for a path work for some step sets, but can fail for others.

\subsubsection{Restricting stopping conditions}

Given that upper bounds were obtained in a unified way by considering the relaxation of the problem to a half plane model, a good first idea for finding lower bounds would be finding a restriction with the same exponential growth. For example, this was the technique we used in Proposition \ref{S11} to prove the exponential growth for $\kS_{11}$. Unfortunately, restricting the stopping condition fails to provide tight lower bounds for the positive drift cases $\kS_i$, $i \in \{ 5,6,7,8,9,10\}$. This is because each positive drift case is the reflection of a corresponding negative drift case, as shown in Figure \ref{711}.

\begin{figure}[h]
 \begin{center}
  \diagb{N,S,NW,NE} 
  \begin{tikzpicture}[scale = 0.5]
  \draw[<->,black,very thick] (-1,0) -- (1,0); \draw[white,thick] (-1,-1) -- (1,-1); \end{tikzpicture}
  \diagb{N,SE,SW,S}
  \caption{$\kS_7$ and $\kS_{11}$.}\label{711}
 \end{center}
\end{figure}

What this symmetry implies is a bijection between the walks returning to the origin on either step set. Following a walk ending at the origin on one step set in reverse will give you a walk returning to the origin on the other. Given this bijection, the exponential growth factors must be the same, and we cannot use walks returning to the origin for a tight lower bound on the exponential growth factor of step set 7.

In fact, this is true of every pair of step sets which are reflections across a horizontal axis of symmetry. To see this, we note that walks returning to the origin are in bijection for every pair, and so the exponential growth factor for such walks is bounded above by the exponential growth factor for general quarter plane walks on the negative drift version of each pair. We already showed in section \ref{ubs} that these factors are in turn bounded above by the exponential growth factors of the number of half plane walks on the same step set, each of which is strictly smaller than the number of steps. Thus, for every positive drift step set, we need a better lower bound than the number of walks returning to the origin. The following section will provide these.

\subsubsection{Lower bounds from subsets}

Recall Example \ref{int}. By carefully inserting new steps (see Figures \ref{onefig} and \ref{twofig}) into walks on a step set, we were able to bootstrap a lower bound on the exponential growth of a larger step set. This relied on the models having only exponential growth, so the proof reduced to an application of the binomial theorem. In our 23 examples, the growth has a constant factor and a subexponential factor. While the analysis does get a little messier the relevant part of the result is still the same.

\begin{lem}\label{onestep}
Let $q(i) \sim C\beta^ii^s$, where $s \leq 0, C \in \RR^+, \beta \in [1,\infty)$, and define
\[
 q'(n) = \sum_{i \geq 0} \binom{n}{i} q(i).
\]
Then $q'(n)$ has exponential growth factor $\beta + 1$.
\end{lem}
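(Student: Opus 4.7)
The plan is to establish the exponential order relation $q'(n) \bowtie (\beta+1)^n$ by sandwiching $q'(n)$ between two quantities whose dominant exponential growth is $(\beta+1)^n$, modulated only by subexponential factors, and then to apply $\limsup \tfrac{1}{n}\log$ on the upper bound and $\liminf \tfrac{1}{n}\log$ on the lower bound to pinch the growth factor. The backbone of both bounds is the binomial identity $\sum_{i=0}^{n}\binom{n}{i}\beta^{i} = (1+\beta)^{n}$; the extra work is entirely about handling the $i^{s}$ factor and the low-$i$ terms where the asymptotic regime $q(i) \sim C\beta^{i}i^{s}$ has not yet kicked in.

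For the upper bound, I would first observe that because $s \leq 0$ and $\beta \geq 1$, the factor $i^{s}$ is bounded by $1$ for $i \geq 1$, while $q(0)$ and finitely many initial values can be absorbed into a single multiplicative constant. Concretely, choose $N$ large enough that $q(i) \leq 2C\beta^{i}$ for all $i \geq N$, then pick a constant $M$ large enough that $q(i) \leq M\beta^{i}$ holds uniformly for every $i \geq 0$. Summation termwise then gives $q'(n) \leq M\sum_{i=0}^{n}\binom{n}{i}\beta^{i} = M(1+\beta)^{n}$, so $\limsup_{n\to\infty}q'(n)^{1/n} \leq \beta+1$.

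For the lower bound, the idea is to restrict the sum to those indices where the asymptotic lower estimate is valid and then use monotonicity of $i^{s}$ to extract a uniform subexponential factor. Fix $\epsilon \in (0,1)$ and choose $N$ so that $q(i) \geq C(1-\epsilon)\beta^{i}i^{s}$ for all $i \geq N$. Since $s \leq 0$, the map $i \mapsto i^{s}$ is non-increasing, so for $1 \leq i \leq n$ we have $i^{s} \geq n^{s}$. Then
\begin{equation*}
q'(n) \;\geq\; C(1-\epsilon)\,n^{s}\sum_{i=N}^{n}\binom{n}{i}\beta^{i} \;=\; C(1-\epsilon)\,n^{s}\left[(1+\beta)^{n} - \sum_{i=0}^{N-1}\binom{n}{i}\beta^{i}\right].
\end{equation*}
The subtracted sum is a polynomial in $n$ of degree at most $N-1$, hence negligible compared with $(1+\beta)^{n}$. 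Consequently $q'(n) \geq C(1-\epsilon)n^{s}(1+\beta)^{n}(1-o(1))$, and taking $n$-th roots forces $\liminf_{n\to\infty} q'(n)^{1/n} \geq \beta+1$.

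The two bounds together yield $\lim_{n\to\infty}q'(n)^{1/n} = \beta+1$, which is precisely $q'(n)\bowtie (\beta+1)^{n}$. I expect the main technical nuisance to be the lower bound: unlike the upper bound, one cannot simply drop the $i^{s}$ factor because it is a genuine decay when $s<0$, so the monotonicity argument producing the uniform factor $n^{s}$ is essential, as is the bookkeeping to show that the initial $i<N$ terms (where $q$ may be erratic) contribute only polynomially and therefore do not degrade the exponential growth.
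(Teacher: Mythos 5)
Your proposal is correct and follows essentially the same strategy as the paper's proof: split the sum at a threshold $N$ beyond which the asymptotic estimate for $q(i)$ is in force, use $i^{s}\geq n^{s}$ (lower bound) and $i^{s}\leq 1$ (upper bound) to pull out a uniform subexponential factor, apply the binomial theorem, and show the contribution from $i<N$ is polynomial in $n$ and hence negligible. Your upper bound is a slight streamlining of the paper's (you absorb the small-$i$ values into a single uniform constant $M$ rather than tracking an explicit error term), but the underlying argument is the same.
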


\begin{proof}
Fix $\epsilon >0$. Then since $q(n) \sim C \beta^n n^s$, we know that there exists some $N$ such that~$n > N$ gives
\[
 (1 - \epsilon) \leq \left| \frac{q(n)}{C \beta^n n^s} \right| \leq (1 + \epsilon).
\]
We take $n > N$, and consider the quantity
\[
 q'(n) = \sum_{i \geq 0} \binom{n}{i} q(i).
\]

We may decompose the summation into two parts, those terms with summation index smaller than $N$, and those larger. We look first at the lower bound given by the growth properties:
\begin{eqnarray*}
 q'(n) 
  &\geq& \sum_{i = 0}^N \binom{n}{i}q(i) + \sum_{i = N+1}^n\binom{n}{i} ((1 - \epsilon) C \beta^i i^s), \\
  &\geq& \sum_{i = 0}^N \binom{n}{i}q(i) + (1 - \epsilon)Cn^s\sum_{i = N+1}^n\binom{n}{i} \beta^i \hspace{2em} (\mbox{since } n^s \leq i^s) \\
  &\geq& \sum_{i = 0}^N \binom{n}{i}(q(i) - Cn^s\beta^i) + (1 - \epsilon)Cn^s\sum_{i = 0}^n \binom{n}{i} \beta^i \\
  &\geq& \sum_{i = 0}^N \binom{n}{i}(q(i) - Cn^s\beta^i) + (1 - \epsilon)C(\beta+1)^nn^s \hspace{1em} \mbox{(by the binomial theorem)}.
\end{eqnarray*}

We must now show that the error terms become insignificant as $n$ increases. We do so similarly to before: we divide throughout by $C(\beta+1)^nn^s$ and take the limit. Since the upper limit of summation is a constant, the growth of the numerator is polynomial, and thus dominated by the growth of~$(\beta + 1)^n$. That is, we may find an $N^- > N$ such that for $n > N^-$ we have
\[
 \left| \frac{\sum_{i = 0}^N \binom{n}{i}(q(i) - Cn^s\beta^i)}{C(\beta+1)^nn^s} \right| \leq \frac{\epsilon}{2}.
\]
So taking $n > N^-$, we find
\[
 (1 - 2\epsilon) < \left(1 - \frac{\epsilon}{2}\right) \leq \left| \frac{q'(n)}{C(\beta + 1)^nn^s} \right|.
\]

We work similarly on the upper bound. The main difference is that we wish to increase the value of the bound when making simplifications, so rather than replacing the polynomial factors with something smaller, we remove them entirely.
\begin{eqnarray*}
 q'(n) 
  &\leq& \sum_{i = 0}^N \binom{n}{i}q(i) + \sum_{i = N+1}^n\binom{n}{i} ((1 + \epsilon) C \beta^i i^s), \\
  &\leq& \sum_{i = 0}^N \binom{n}{i}q(i) + (1 + \epsilon) C \sum_{i = N+1}^n\binom{n}{i} ( \beta^i ), \\
  &\leq& \sum_{i = 0}^N \binom{n}{i}(q(i) - C\beta^i) + (1 + \epsilon)C(\beta+1)^n.
\end{eqnarray*}
We then use the same analysis as before to show that the error term becomes smaller than $\frac{\epsilon}{2} C(\beta + 1)^n$. That is, there is some $N^+$ such that taking $n > N^+$ gives
\[
 \left| \frac{q'(n)}{(\beta+1)^n} \right| \leq (1 + \frac{3\epsilon}{2}) < (1 + 2\epsilon).
\]

Putting it all together, we have that for $n > \max \left\{ N^+, N^- \right\}$, the quantity $q'(n)$ satisfies the inequality
\[
 (1-2 \epsilon) C (\beta+1)^n n^s \leq q'(n) \leq (1 + 2\epsilon) C(\beta+1)^n.
\]
Consider what happens when we take the logarithm of the bounds, divide by $n$ and take the limit as $n$ tends to infinity, our standard method of isolating the exponential growth factors. This will cancel all but the exponential growth in each term. Since the upper bound and lower bound both have exponential growth factor $(\beta+1)$, we get
\[
 \log(\beta+1) \leq \lim_{n\rightarrow \infty} \frac1n \log q'(n) \leq \log(\beta+1),
\]
so the exponential growth factor of $q'(n)$ is $(\beta+1)$.
\end{proof}

So, we may now apply this to our example from before, getting a lower bound for the number of walks on $\kS_7$.

\begin{ex}\label{osex}
Take $\kS_5$ and $\kS_7$, as we did in Example \ref{int}. As discussed already, the number of walks on $\kS_7$ obtained by inserting a north step at internal points of a walk from $\kS_5$ is
\[
 q'_7(n) = \sum_{i \geq 0} \binom{n}{i} q_5(i).
\]
We showed in Section \ref{base} that $\beta_5 = 3$, and it follows from $q_5(n) \leq 3^n$ that $q_5(n) \sim C 3^n n^s$, where $C \in \RR^+$ and $s \leq 0$, therefore we may apply proposition \ref{onestep} to find that
\[
 q'_7(n) \sim C' 4^n n^{s'},
\]
where $C'$ must be positive real, since $q'_7(n)$ counts something, and $s' \leq 0$ since $q'_7(n) \leq C 4^n$, as shown in Proposition \ref{onestep}. Thus, if $q_7(n)$ is the number of all possible walks on $\kS_7$, and~$h_7(n)$ is the number of half plane walks taken on $\kS_7$, we have
\[
 q'_7(n) \leq q_7(n) \leq h_7(n).
\]
We may assume that $n$ is sufficiently large, and replace each count with its asymptotic growth, giving
\[
 C'4^nn^{s'} \leq D \beta_7^n n^r \leq E4^n n^q.
\]
Using our standard method of isolating the exponential growth factor with a limit of a logarithm, we find
\[
 \log 4 \leq \log \beta_7 \leq \log 4,
\]
or
\[
 \beta_7 = 4.
\]
\end{ex}

The previous example shows that the exponential growth factor for the sequence $\left(q_7(n)\right)_n$ is 4. The next proposition gives us a method for adding two steps to arrive at a similar conclusion.

\begin{lem}\label{twostep}
Let $q(n) \sim C \beta^n n^s$ where $C$ is a positive real constant, $\beta \in [1,\infty)$ and $s \leq 0$. Let $d(n)$ be the number of Dyck prefixes on $n$ steps. Then the quantity
\[
 q''(n) = \sum_{i = 0}^n \binom{n}{i} d(i)q(n-i)
\]
has growth
\[
 q''(n) \sim C'' (\beta + 2)^n n^{s''},
\]
where $C''$ is a positive real constant and $s'' \leq 0$.
\end{lem}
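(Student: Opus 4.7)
The plan is to mirror the strategy of Lemma~\ref{onestep}, but now the convolution involves two asymptotically regular sequences instead of one. First I would recall from Example~\ref{asymptex} (or equivalently the zero drift case of Theorem~\ref{dirasympt} applied to Dyck prefixes) that there exist a positive real constant $D$ and a non-positive real $r$ (in fact $r = -1/2$) such that $d(n) \sim D \cdot 2^n n^{r}$. This puts both factors in the summand in the same asymptotic regime as in Lemma~\ref{onestep}, and the target exponential factor $(\beta + 2)^n$ is precisely what the binomial theorem would produce from $\sum_i \binom{n}{i} 2^i \beta^{n-i}$.

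Next, given $\epsilon > 0$, I would choose $N$ large enough so that for all $i > N$ one has $(1 - \epsilon) D \cdot 2^i i^{r} \le d(i) \le (1 + \epsilon) D \cdot 2^i i^{r}$ and simultaneously $(1-\epsilon) C \beta^j j^s \le q(j) \le (1 + \epsilon) C \beta^j j^s$ for all $j > N$. I would then split the sum
\[
q''(n) = \sum_{i=0}^n \binom{n}{i} d(i) q(n - i)
\]
into a middle range $N < i < n - N$ and two boundary ranges $i \le N$ and $i \ge n - N$. On the middle range both asymptotic estimates apply, so I can bound the summand between $(1 \pm \epsilon)^2 CD \cdot 2^i \beta^{n-i} i^r (n-i)^s$. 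The weighted sum $\sum_i \binom{n}{i} 2^i \beta^{n-i}$ equals $(\beta + 2)^n$ by the binomial theorem; folding in the polynomial factors $i^r (n-i)^s$ gives an expression of order $(\beta + 2)^n n^{s + r}$ up to a positive constant (the dominant contribution coming from $i$ near $\tfrac{2n}{\beta + 2}$, by the standard concentration of the binomial distribution). This identifies $s'' = s + r \le 0$ and pins down a positive $C''$.

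The remaining task is to show the two boundary ranges contribute only to the error. As in the proof of Lemma~\ref{onestep}, their summation indices are bounded by the fixed constant $N$, so the number of surviving terms is constant in $n$. Each such term is bounded crudely by $\binom{n}{i} \cdot (\text{const}) \cdot 2^i \beta^{n-i}$ (using any trivial upper bound on $d$ and $q$ for the indices at the opposite end of the range), which is polynomial in $n$ times either $\beta^n$ or $2^n$. Since $\max(\beta, 2) < \beta + 2$ whenever $\beta \geq 1$, these contributions are asymptotically negligible compared with $(\beta + 2)^n n^{s''}$, and can be absorbed into an $\epsilon/2$ correction on both the lower and upper bounds for sufficiently large $n$.

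The main obstacle I expect is the careful bookkeeping of the polynomial factor $i^r (n - i)^s$ inside the weighted binomial sum, since it is not a pure exponential and the standard binomial identity does not apply directly. The cleanest way around this is a Laplace-style concentration argument: the binomial weights $\binom{n}{i} 2^i \beta^{n-i}/(\beta+2)^n$ concentrate on $i \approx \tfrac{2n}{\beta+2}$ with width $O(\sqrt n)$, so $i^r (n-i)^s$ may be replaced by $\bigl(\tfrac{2n}{\beta+2}\bigr)^r \bigl(\tfrac{\beta n}{\beta+2}\bigr)^s$ up to a multiplicative factor tending to $1$, yielding $q''(n) \sim C'' (\beta + 2)^n n^{s + r}$ with $C'' > 0$ and $s'' = s + r \le 0$, which is exactly the desired asymptotic.
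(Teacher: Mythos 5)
Your proof is correct, and it establishes the lemma by a genuinely different route on the key step. Both you and the paper split $\sum_{i} \binom{n}{i} d(i) q(n-i)$ into three ranges ($i \le N$, $N < i < n - N$, $i \ge n - N$) and argue that the two boundary ranges are asymptotically negligible compared to $(\beta+2)^n n^{s''}$; your reasoning there (constant number of terms, each polynomial-in-$n$ times $\beta^n$ or $2^n$, and $\max(\beta,2) < \beta+2$) matches the paper's error-term analysis. The divergence is in how the polynomial factor $i^r(n-i)^s$ inside the middle sum is handled. The paper bounds it crudely: for the lower bound it replaces $i^r$ and $(n-i)^s$ by $n^r$ and $n^s$ (valid since $r, s \le 0$ and $i, n-i \le n$), giving a main term $CD n^{r+s}(\beta+2)^n$; for the upper bound it replaces both monomials by $1$, giving $CD(\beta+2)^n$ with no polynomial factor at all. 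This produces mismatched polynomial exponents on the two sides, so the paper only concludes that the exponential growth is $\beta+2$, that $C'' > 0$ (since the sequence is positive), and that $s'' \le 0$ (since the upper bound is a constant times $(\beta+2)^n$) — implicitly leaning on the knowledge that the sequences it will apply this to already have a growth template of the form $C''(\beta+2)^n n^{s''}$. You instead run a Laplace/concentration argument: the weights $\binom{n}{i}2^i\beta^{n-i}/(\beta+2)^n$ concentrate near $i_0 = 2n/(\beta+2)$ with fluctuations $O(\sqrt n)$, over which $i^r(n-i)^s$ is effectively constant, so you get the sharper conclusion $s'' = s + r$ (hence $\le 0$) directly. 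Your route is more informative — it actually establishes the existence of the asymptotic form rather than inferring it from bounds — at the cost of a local-limit concentration step that the paper deliberately avoids by using one-sided crude bounds. Both are valid; you should just be aware the paper's argument is more elementary while yours is sharper but needs the concentration step spelled out carefully (your proposal flags this, but a full write-up would want to quantify the window and the tail decay explicitly).
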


\begin{proof}
Fix $\epsilon > 0$. Then thanks to the asymptotic form of $q(n)$, we know there is some~$M > 0$ such that for all $n >M$, we have
\[
 (1 - \epsilon) \leq \left|\frac{q(n)}{C\beta^nn^s} \right| \leq (1 + \epsilon).
\]
For $d(n)$, which has growth $d(n) \sim D2^nn^r$ with $r \leq 0$, there is some $M' > 0$ such that for $n > M'$ we have
\[
 (1 - \epsilon) \leq \left|\frac{d(n)}{D2^nn^r} \right| \leq (1 + \epsilon).
\]
Take $N = \max \{M',M\}$ and let $n > 2N$. Then we define
\[
 q''(n) = \sum_{i \geq 0} \binom{n}{i} d(i)q(n-i).
\]

The argument here follows the same lines as Proposition \ref{S13}, splitting the sum into three intervals, and bounding. Since it is similar, we move somewhat faster here. Going through the motions leads to the following inequality.

\begin{eqnarray*}
  (1 - 2\epsilon) \left(Cn^s \sum_{i<N} \binom{n}{i} (d(i)\beta^{n-i} \right. &-& Dn^r2^i\beta^{n-i})  + CDn^{r+s}(\beta + 2)^n \\
  && \left. + Dn^r \sum_{i=n-N+1}^n \binom{n}{i} (2^iq(n-i) - Cn^s2^i\beta^{n-i}) \right) \leq q''(n).
\end{eqnarray*}

Now we analyse the growth. The argument from here follows the same lines as the single step case. We divide throughout by $CD(\beta + 2)^nn^{r + s}$ to find
\begin{eqnarray*}
  (1 - 2\epsilon) && \left(\frac{\sum_{i<N} \binom{n}{i} \big(d(i)\beta^{n-i} - Dn^r2^i\beta^{n-i}\big)}{D(\beta + 2)^nn^r}  + 1 \right. \\
  && \left. +  \frac{ \sum_{i=n-N+1}^n \binom{n}{i} \big(2^iq(n-i) - Cn^s2^i\beta^{n-i}\big)}{C(\beta + 2)^nn^{s}} \right) \leq \frac{q''(n)}{CD(\beta + 2)^nn^{r+s}}.
\end{eqnarray*}
The error terms both tend to zero, so we can find some $N^-$ large enough such that for $n > N^-$ both will be smaller than $\epsilon/2$. So, taking $n > N^-$ means
\[
 (1 - \epsilon) \leq \frac{q''(n)}{CD(\beta+2)^nn^{r+s}}.
\]

The upper bound is similar, so again we omit some details. Since $(1 + 3\epsilon)$ is larger in modulus then both $(1 + \epsilon)$ and $(1 + \epsilon)^2$, we replace both of the smaller terms by the larger, increasing the upper bound. We also replace the monomials in $i$ and $n$ by 1, since the exponents are at most 0. We then do the same analysis, allowing us to find some $N^+$ such that for $n> N^+$,
\[
 (1 + 4\epsilon) \geq \left| \frac{q''(n)}{CD(\beta + 2)^n} \right|.
\]

Putting this all together, we find that for $n > \max\{N^-,N^+\}$ we have
\[
 (1 - 4\epsilon)CD(\beta + 2)^nn^{r+s} \leq q''(n) \leq (1 + 4 \epsilon)CD(\beta + 2)^n.
\]
Again, by using our method of isolating the exponential growth factor, we find that the exponential growth factor of $q''(n)$ is $\beta + 2$ as desired. The constant portion of the growth must be real and positive, since $q''(n)$ is positive, and finally, the polynomial part must have exponent at most zero, since the upper bound has no polynomial portion. That is
\[
 q''(n) \sim C''(\beta + 2)^n n^{s''},
\]
where $C''$ is positive and real and $s'' \leq 0$.
\end{proof}

Let us see this applied to an example.

\begin{ex}\label{tsex}
Take again $\kS_7$ and $\kS_8$ as we did in Example \ref{int}. Now we wish to build walks of length $n$ taken on the step set $\kS_8$ by inserting a meander (or Dyck prefix) of length $n-i$ on the $\{ \textbf{E,~W}\}$ into a walk of length $i$ on $\kS_7$. If $q_7(i)$ is the number of walks of length $i$ on $\kS_7$ and $d(n-i)$ the number of Dyck prefixes of length $n-i$, the number of such walks is
\[
 q_8''(n) = \sum_{i = 0}^n \binom{n}{i} q_7(i)d(n-i).
\]
Now that we have the proofs of the base cases and Lemmas \ref{onestep} and \ref{twostep}, we know that the growth of $q_7(n)$ is suitable for the application of Proposition \ref{twostep}, which tells us that
\[
 q''_8(n) \sim C'' 6^n n^{s''},
\]
where $C''$ is real and positive, and $s''$ is at most zero.

Next, we know that $q''_8(n)$ forms a lower bound for $q_8(n)$
\[
 q''_8(n) \leq q_8(n) \leq h_8(n),
\]
since we can not get all walks of length $n$ on $\kS_8$ in this way. We take $n$ sufficiently large, so that we may make the substitution of each quantity with its growth, giving
\[
 C'' 6^n n^{s''} \leq D \beta^n n^r \leq E 6^n n^{t}.
\]
Once again, we may isolate the exponential growth factor, finding
\[
 \log 6 \leq \log \beta \leq \log 6,
\]
or simply
\[
 \beta = 6.
\]
Thus, we have shown the exponential growth factor for the sequence $(q_8(n))_n$ is $\beta_8 = 6$.
\end{ex}

Thus we have a way of adding one step and two steps at a time to step sets and increasing the exponential growth. This allows us to give lower bounds on the growth of a lot of step sets, using fewer than half of the 23 D-finite cases as groundwork. Lemmas \ref{onestep} and \ref{twostep}, along with the results of Section \ref{base}, give the results in Table \ref{summ}. Before we conclude this chapter, we mention some other uses for our new lemmas.


There is another way to use Lemma \ref{twostep} that has not yet been discussed. So far, all examples show the insertion of a cardinality two step set with zero drift (steps in the opposite direction), but we can also insert two element step sets with positive drift, as long as no component of the drift is towards a boundary. Take $\kS_7 = \diagr{N,S,NE,NW}$ as an example. Rather than using the step set $\diagr{NE,NW,S}$ as a parent for this step set, we could use Lemma \ref{twostep} to insert a Dyck prefix on the steps $\diagr{NE,NW}$ into a Dyck prefix on the vertical steps \diagr{N,S}, as counting the ways to do this would give formula from the hypothesis of Lemma \ref{twostep}. This gives a large subset of the walks on $\kS_7$ as a shuffling of Dyck prefixes (walks on $\kS_1$), which combined with Theorem \ref{bnd} gives the exponential growth factor of 4.

This concludes the material of Chapter \ref{qpwalks} and Part \ref{plp}. Part \ref{conc} will conclude the thesis with a discussion of the results and future work.

\part{Conclusion}\label{conc}

\chapter{Summary and open problems}\label{conclusion}

The main result of this thesis is the proof of the exponential growth factors for the quarter plane models with either zero drift of a drift vector parallel to a boundary through combinatorial means. This was done by bounding the number of walks on a step set $\kS_i$ by known quantities with exponential growth factors equal to the conjectured exponential growth of each model. By then taking a limit which isolated the exponential growth of each bound, we were able to squeeze the exponential growth and prove the conjectured value in each case.

The significance of these results can be expressed in two ways. First, it answers any question about the possibility of a human proof of the exponential growth of quarter plane walks with single component drift in the positive. Secondly, up until now there has been no treatment of the exponential growth of walks taken on these 23 step sets in one piece of work, without either a level of computation similar to that given in \cite{BoKa09,BosKau09}, or some very complicated analysis as in the forthcoming results announced by Fayolle and Raschel in \cite{FaRa12}. Our method not only gives human driven, rigorous results using some fairly elementary techniques, but it also introduces Lemmas \ref{onestep} and \ref{twostep}, which give a systematic method for boot strapping exponential growth from smaller step sets.

\begin{sidewaystable}
  \begin{center}
    \begin{tabular}{|c|r|l|l|}
      \hline
      Class & Functional Equation & Classification of GF & Enumerative Results \\
      \hline
      $\cW_\kS$ & $W_\kS(x,y;t) = 1 + tS(x,y)W_\kS(x,y;t)~~$ & ~~Rational &  \begin{tabular}{l} Exact and asymptotic \\ formulae for coefficients. \end{tabular} \\
      \hline
      $\cF_\kS$ & \begin{tabular}{r} $F_\kS(y;t) = 1 + tP(y)F_\kS(y;t) $ \\ $ - \{y^{<0}\}tP(y)F_\kS(0;t)$ \end{tabular} & ~~Algebraic & \begin{tabular}{l} Coefficient formulae from coefficient \\ extraction of explicit GF, \\ asymptotic results from Theorem \ref{dirasympt} \end{tabular} \\
      \hline
      $\cH_\kS$ & \begin{tabular}{r} $H_\kS(x,y;t) = 1 + tS(x,y)H_\kS(x,y;t) $ \\ $ - \{y^{<0}\}tS(x,y)H_\kS(x,0;t)$ \end{tabular} & ~~Algebraic & ~~Results via bijection with $\cF_\kS$ \\
      \hline
      $\cQ_\kS$ & \begin{tabular}{r} $Q_\kS(x,y;t) = ~~~~~ 1 + tS(x,y)Q_\kS(x,y;t) $ \\ $- t\left( \{x^{<0}\}S(x,y)Q_\kS(0,y;t)\right.$ \\ $\left. + \{y^{<0}\}S(x,y)Q_\kS(x,0;t) \right)$ \\ $ + \omega(\kS)\frac{t}{xy}Q_\kS(0,0;t)$ \end{tabular} & \begin{tabular}{l} 23 D-finite \\ 51 non D-finite \\ 5 conjectured non D-finite \end{tabular} & \begin{tabular}{l} For D-finite cases: \\ enumerative results for 19 \\ cases via orbit sums, some direct \\ counting results, experimental \\ asymptotics via long sum \\ generation, exponential  growth \\ factors from bounding arguments \end{tabular} \\
      \hline
    \end{tabular}
    \caption{A table summarising each class, its functional equation, classification of GFs and summary of enumerative results.}\label{fetab}
  \end{center}
\end{sidewaystable}

Our results are by no means comprehensive and for the remainder of this chapter we discuss extensions of these methods to more general settings. The first extension, and perhaps the lowest hanging fruit, is to work on the remaining 56 step sets - those with non-zero drift which is not parallel to an axis - using the method of bounding and squeezing developed in Chapter \ref{qpwalks}. Next, we consider bounding subexponential growth factors, though also address the prospect of limited success. Finally, we consider generalisations of Theorem \ref{bnd} and Lemmas \ref{onestep} and \ref{twostep} to more general models: those with larger steps and those on higher dimensional lattices.

\subsection*{Results for the remaining sets of small steps}

For the remaining sets of small steps, finding the exponential growth factors is harder. These models are known in many cases to have a non-D-finite generating function, meaning that they are not $G$-series. Therefore we don't have a nice template for the asymptotic growth of the counting sequence as in the 23 cases in Chapter \ref{qpwalks}. We can still apply the methodology as long as we assume a generic growth template
\[
 q_\kS(n) \sim \kappa \beta^n \theta(n),
\]
where $\theta(n)$ is some subexponential factor, that is, $\lim_{n \rightarrow \infty} |\theta(n)|^{1/n} = 1$. Again, here drift is the key in determining whether a model will have full exponential growth equal to the number of steps, or exponential growth diminished by increased interaction with a boundary.

\subsubsection{Positive drift - tight bounds from Lemma \ref{onestep}}

We begin by showing a family of step sets known as the singular sets, shown in Figure \ref{singfig}, for which our method will give exponential growth factors. Notice that all step sets have positive drift in each component, so the drift is into the quarter plane, and it would be reasonable to expect that in each case $\beta_\kS = |\kS|$.

\begin{figure}[h]
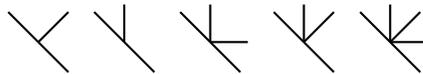

 \begin{center}
  \diagb{NW,NE,SE} \diagb{NW,N,SE} \diagb{NW,N,E,SE} \diagb{NW,N,NE,SE} \diagb{NW,N,NE,E,SE}
  \caption{The singular step sets.}\label{singfig}
 \end{center}
\end{figure}

These are all positive drift step sets, so the exponential growth for any counting sequence is bounded above by the number of steps, found by taking the half plane walks bounded below by the $x$-axis. We can bound the two step sets with cardinality 3 below by a subset of all walks, in both cases by injecting walks taken on step set 
$\kS_{17}$ (or its $y = x$ reflection). Then since we know the results for the smallest two step sets, we can get the other three by applying Lemma \ref{onestep}.

\subsubsection{Negative drift - finding bounds requires care}

Now, we show an example $\kS$ with more interesting drift, shown in Figure \ref{negfig}. The drift is positive in the $y$-component but negative in $x$. We can expect a diminished exponential growth factor due to this drift, but we must take care in bounding it.

\begin{figure}[h]
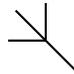

 \begin{center}
  \diagb{N,SE,W,NW}
  \caption{A non-singular step set $\kS$ with a negative drift component.}\label{negfig}
 \end{center}
\end{figure}

The drift is key to finding a least upper bound on the exponential growth. In Theorem \ref{bnd}, it was shown that an upper bound on the exponential growth could be found by removing the $y$-axis as a boundary and using the results of Chapter \ref{hpwalks} to find the exponential growth of the relaxation. What made these upper bounds the lowest available from this type of relaxation was that the drift was always parallel to the $y$-axis, and thus a negative drift quarter plane walk was also a negative drift half plane walk.

However, as already noted, the drift is $x$-negative and $y$-positive. Thus, removing the $y$-axis will give a positive drift half plane model, which won't give a least upper bound. Taking instead the relaxation to the $x \geq 0$ half plane model, we preserve the negative drift and find the least upper bound available from this relaxation.

Following the methods of Chapter \ref{qpwalks}, the next step would be to attempt to remove steps from $\kS$ in order to find a more easily enumerated smaller model to which we could apply Lemma \ref{onestep} or \ref{twostep}. There is only one choice of step to remove here, though, and it leads to the trivial model \diagr{NW,W,SE}. Injection of walks on $\kS_{17}$ would once again work, but we cannot expect this to be tight.

Some future work on this topic could focus on other ways of producing lower bounds to overcome the problems just discussed. One way was already touched on in the proof of Proposition \ref{S05}, where a lower bound on the $\beta_5$ was imported from $\beta_{17}$ by rotating a step away from a boundary. While this didn't help in the example just given, we could find families related by this rotational relation to help cut down any case analysis needed. A long term goal would be finding an automatic way to bound growth constants below, as we did with upper bounds in Theorem \ref{bnd}. Full asymptotic expressions may be on the way, however. Very recently, Fayolle and Raschel have announced `general asymptotic results' in a forthcoming paper in \cite{FaRa12}, giving as an example the asymptotic expression for the counting sequences of walks on $\kS_1$ ending anywhere, on an axis and returning to the origin. They do so by using integral equations found by viewing the model as a boundary value problem, requiring some heavy analysis to find solutions.

\subsection*{Predicting polynomial growth}

Next, we move on to the prediction of the subexponential factors. To simplify the discussion, we focus on our 23 models with simple drift. We know that for these cases, the subexponential factors are of the form
\[
 \theta(n) = \kappa n^{\alpha},
\]
where $\kappa$ is a positive real constant and $\alpha$ is, according to Bostan and Kauers, a negative rational number. For some models, such as $\kS_2 = \{\textbf{NE,~SE,~SW,~NW}\}$, we can find the polynomial factors by an application of Stirling's approximation, since we can count the whole set. For others, where we need to appeal to the asymptotics of a subset, we need a way to isolate the polynomial factor. The following is a very informal sequence of calculations intended to show how we might do this.

So, taking a step set $\kS$, we have our template for the asymptotics of the quarter plane walks on $\kS$
\[
 q(n) \sim \kappa \beta^n n^{\alpha}.
\]
For sufficiently large $n$ we can replace $q(n)$ with its asymptotic estimate and divide out the known exponential growth $\beta^n$:
\[
 \frac{q(n)}{\beta^n} \sim \kappa n^{\alpha};
\]
then taking a logarithm:
\[
 \log \left( \frac{q(n)}{\beta^n} \right) \sim \log \kappa + \alpha \log n;
\]
and the correct limit:
\[
 \lim_{n \rightarrow \infty} \frac{1}{\log n} \log \left( \frac{q(n)}{\beta^n} \right) \sim \alpha;
\]
we can isolate $\alpha$. 

Given a method for isolation involving limits, it would be reasonable to expect to be able to bound and squeeze yet again. Unfortunately, the argument breaks down. Intuitively, it's precisely because the relaxations and restrictions have the same exponential growth: it's then the role of the polynomial factor to modulate the asymptotic growth and keep it close to the actual count. For the walks in the half plane, according to Theorem \ref{dirasympt}, if the drift is positive the polynomial factor is of degree 0. Similarly, the negative drift cases have polynomial factors of degree $-\frac32$, and the zero drift have degree $-\frac12$. In all cases, this is out by a difference of at least $\frac12$ with the results of Table \ref{fintab}, and will not give a tight upper bound on the polynomial factor. It appears, therefore, that new methods are necessary for finding tight bounds on the polynomial growth. Again, according to the announcement of Fayolle and Raschel in \cite{FaRa12}, their approach using boundary value problems and integral equations will produce full asymptotic expressions for classes of walks with small steps.

\subsection*{Generalisations}

The material of Chapter \ref{dir} is taken from a paper \cite{BaFl02}, in which Banderier and Flajolet determine explicit enumerative formulae for directed paths with any step size. These more general results should give upper bounds on the exponential growth of the coefficients of generating functions for more general step sets in the quarter plane, meaning that Theorem \ref{bnd} should translate directly. Furthermore, Lemmas \ref{onestep} and \ref{twostep} make no mention of the size of steps, and so they might be translated into a more general setting as well. This discussion leads to the following conjecture.

\begin{conj}\label{bigstepsboot}
Let $\kS$ be as above. Let $\kT = \kS \cup \{s_0\}$ and $\kU = \kS \cup \{s_1,s_2\}$, where $s_0$ is a step of any size away from any boundary and $\{s_1,s_2\}$ is a step set without drift towards any boundary. If $\beta_\kS$ (respectively $\beta_\kT,\beta_\kU$) is the exponential growth of walks in the quarter plane on $\kS$ (resp. $\kT,\kU$), then Lemma \ref{onestep} will import from $\kS$ the lower bound
\[
 \beta_\kS + 1 \leq \beta_\kT,
\]
and similarly, Lemma \ref{twostep} will import the lower bound
\[
 \beta_\kS + 2 \leq \beta_\kU.
\]
\end{conj}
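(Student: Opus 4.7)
The plan is to reduce the conjecture to Lemmas \ref{onestep} and \ref{twostep} themselves, which are phrased purely as asymptotic statements about sums $\sum_i \binom{n}{i} q(i)$ and $\sum_i \binom{n}{i} d(i) q(n-i)$ and make no reference to step sizes. What is needed is a combinatorial injection producing such a sum as a lower bound on $q_\kT(n)$ or $q_\kU(n)$, exactly paralleling Examples \ref{osex} and \ref{tsex}.

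For the first inequality, I would fix a walk $w \in \cQ_\kS$ of length $i$ and a subset $I \subseteq \{1, \ldots, n\}$ of size $n-i$, and insert a copy of $s_0$ at each position of $I$ to produce a length-$n$ sequence of steps on $\kT$. Because $s_0$ is assumed to move away from every boundary (i.e.\ has non-negative component along each coordinate axis), every partial sum of the resulting walk is of the form $p_k + m\, s_0$, where $p_k$ is a partial sum of $w$ and $m \geq 0$ counts the copies of $s_0$ inserted so far; such a vector lies in $\mathbb{Z}_{\geq 0}^2$, so the walk is a valid quarter plane walk on $\kT$. Distinct pairs $(w, I)$ yield distinct walks because the positions of $s_0$ can be read directly off the $\kT$-walk, giving
\[
q_\kT(n) \;\geq\; \sum_{i \geq 0} \binom{n}{i} q_\kS(i),
\]
and Lemma \ref{onestep} applied to $q_\kS$ then yields $\beta_\kT \geq \beta_\kS + 1$.

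For the second inequality, I would analogously interleave a walk of length $n-i$ on $\kS$ with a \emph{meander} of length $i$ on $\{s_1, s_2\}$, where a meander is a walk starting at the origin all of whose partial sums lie in $\mathbb{Z}_{\geq 0}^2$. Under the hypothesis that $\{s_1, s_2\}$ has no drift toward any boundary, these meanders play the role of Dyck prefixes, and the same partial-sum calculation as in the previous paragraph (with $m\, s_0$ replaced by a partial sum of the meander) shows the interleaved walk stays in the quarter plane. Counting interleavings gives
\[
q_\kU(n) \;\geq\; \sum_{i \geq 0} \binom{n}{i}\, d_{\{s_1,s_2\}}(i)\, q_\kS(n-i),
\]
where $d_{\{s_1,s_2\}}(i)$ is the number of length-$i$ meanders. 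Provided one shows $d_{\{s_1,s_2\}}(i) \sim D\, 2^i\, i^r$ with $r \leq 0$, which is the content of the kernel-method analysis of Chapter \ref{dir} applied to the horizontal projection of $\{s_1, s_2\}$, Lemma \ref{twostep} then yields $\beta_\kU \geq \beta_\kS + 2$.

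The main obstacle is verifying the asymptotic template $q_\kS(n) \sim C\, \beta_\kS^n\, n^s$ with $s \leq 0$ demanded by Lemmas \ref{onestep} and \ref{twostep}. For the 23 D-finite small-step cases this template is supplied by the $G$-series structure of Theorem \ref{gseries}, but for general large-step models it is not known a priori; one would either need to impose this template as an additional hypothesis on $\kS$, or first extend Lemmas \ref{onestep} and \ref{twostep} to the weaker regime $q_\kS(n) = \beta_\kS^n\, \theta(n)$ with $\theta$ merely subexponential, which is a realistic but nontrivial analytic task. A secondary difficulty is making the notion of meander on $\{s_1, s_2\}$ fully precise when both steps have nontrivial components along both axes, since the two boundary constraints can then interact in ways not captured by a single Dyck-prefix count; this will likely require a short case analysis on the signatures of the components of $s_1$ and $s_2$, combined with a half-plane relaxation in the style of Theorem \ref{bnd} to ensure the meander count has the requisite exponential growth of $2$.
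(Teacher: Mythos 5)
This is stated in the paper as a \emph{conjecture}, not a theorem; the author offers no proof and explicitly hedges (``Lemmas \ref{onestep} and \ref{twostep} make no mention of the size of steps, and so they \emph{might} be translated into a more general setting''). So there is no paper proof for your proposal to match. That said, your combinatorial injection is precisely the mechanism the paper gestures at in the sentence following the conjecture (``the application of Lemmas \ref{onestep} and \ref{twostep} correspond to the injection of single steps away from a boundary, or meanders on a pair of steps with drift away from a boundary''), and your partial-sum argument that the inserted walk stays in the quarter plane is correct.

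More importantly, you have put your finger on exactly why the paper leaves this as a conjecture rather than a corollary. The hypotheses of Lemmas \ref{onestep} and \ref{twostep} require the imported sequence to satisfy $q(n) \sim C\beta^n n^s$ with $s \leq 0$; in the small-step D-finite setting this is supplied by Theorem \ref{gseries} and the $G$-series growth template, but for a generic large-step $\kS$ no such template is established, and the lemmas as written simply do not apply. Your two proposed fixes --- assuming the template as an extra hypothesis, or weakening the lemmas to handle an arbitrary subexponential factor $\theta(n)$ --- are exactly the open analytic work. The secondary gap about the meander count $d_{\{s_1,s_2\}}(n)$ is real but more tractable: if $\{s_1,s_2\}$ has zero or outward drift, then either all length-$n$ sequences are already quadrant-confined (giving $d = 2^n$ exactly) or one falls back on a Dyck-prefix-type count, and in both cases the exponential factor is $2$ with non-positive polynomial exponent; still, as you say, writing this cleanly for large steps with mixed signature requires a short case analysis not present in the paper. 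In short, your write-up is a faithful and honest account of what a proof of the conjecture would look like and correctly isolates the obstructions, but it is a proof sketch with acknowledged gaps rather than a complete proof --- which is exactly the status the paper itself assigns to the statement.
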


This conjecture is subject to the same considerations as its counterpart in Chapter \ref{qpwalks}: the application of Lemmas \ref{onestep} and \ref{twostep} correspond to the injection of single steps away from a boundary, or meanders on a pair of steps with drift away from a boundary.

Another generalisation is the application of these results to higher dimensional lattice paths. This is an active area of research \cite{BoKa09}, with much experimental work being done at the time of writing. There should be no trouble in boot strapping as we have done for planar walks with small steps: as long as we are inserting either a single step away from any boundary or a pair of steps with either zero drift of non-zero drift away from any boundary, we should get analogous results. We summarise in the following conjecture.

\begin{conj}\label{hdlemm}
Let $\kS$ be a set of small steps in $\ZZ^d$ for some dimension $d$, let $R$ be some open region in $\ZZ^d$ bounded by rational hyperplanes, let $\kT = \kS \cup \{s_0\}$ and $\kU = \kS \cup \{s_1,s_2\}$ where $s_0$ is a step away from any boundary of $R$ and $s_1,s_2$ is a pair of steps without drift towards any boundary. If $\beta_\kS$ (respectively $\beta_\kT,\beta_\kU$) is the exponential growth of walks in $R$ on $\kS$ (resp. $\kT,\kU$), then Lemma \ref{onestep} will import from $\kS$ the lower bound
\[
 \beta_\kS + 1 \leq \beta_\kT,
\]
and similarly, Lemma \ref{twostep} will import from $\kS$ the lower bound
\[
 \beta_\kS + 2 \leq \beta_\kU.
\]
\end{conj}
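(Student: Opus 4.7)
The plan is to mirror the proofs of Lemmas \ref{onestep} and \ref{twostep} almost verbatim; the analytic estimates (splitting the binomial sum, replacing counts by their asymptotic forms inside a central window, and absorbing the tails) are dimension-independent, so the work is geometric. Two ingredients need to be reestablished in the new setting: an \emph{insertion invariant} guaranteeing that plugging the new step(s) into a walk on $\kS$ never pushes that walk out of $R$, and an \emph{auxiliary meander asymptotic} showing that walks on $\{s_1,s_2\}$ staying in $R$ have exponential growth exactly $2$.

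First I would formalise the hypotheses. Since $R$ is open and bounded by finitely many rational hyperplanes $H_1,\ldots,H_r$, each carries an inward normal $\nu_i \in \QQ^d$. I would interpret ``away from any boundary'' as $s_0 \cdot \nu_i \ge 0$ for every $i$, and ``without drift towards any boundary'' as $(s_1 + s_2) \cdot \nu_i \ge 0$ for every $i$. The insertion invariant is then a one-line check: inserting $s_0$ between the $j$th and $(j{+}1)$st vertex of a walk $w = (p_0,\ldots,p_k) \subseteq R$ leaves the prefix $p_0,\ldots,p_j$ fixed and translates the suffix by $s_0$; the signed distance from each translated vertex $p_\ell + s_0$ to $H_i$ is $(p_\ell - q_i)\cdot \nu_i + s_0 \cdot \nu_i$, where $q_i \in H_i$, which is non-negative because both summands are. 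Iteration of this observation handles arbitrarily many $s_0$-insertions and, by applying the same argument step-by-step to the auxiliary $\{s_1,s_2\}$-walk, handles meander insertions provided that \emph{every prefix} of the inserted meander has non-negative inner product with each $\nu_i$; this is exactly the meander condition on $\{s_1,s_2\}$ in $R$.

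With the insertion invariant in hand, the one-step case goes through unchanged. Writing $q_\kT'(n) := \sum_{i\ge 0}\binom{n}{i}q_\kS(i)$, each summand counts a choice of $i$ positions among $n$ at which to insert $s_0$, together with a walk on $\kS$ filling the remaining $i$ positions; distinct choices yield distinct walks, so $q_\kT'(n) \le q_\kT(n)$. The hypothesis $q_\kS(i)\sim C\beta_\kS^i i^s$ (a consequence of assuming an asymptotic growth template of the same form as in Chapter \ref{qpwalks}) plugs into Lemma \ref{onestep} to give $q_\kT'(n)\sim C'(\beta_\kS+1)^n n^{s'}$, and therefore $\beta_\kS+1 \le \beta_\kT$. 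For the two-step case, the analogue is $q_\kU''(n) := \sum_{i\ge 0}\binom{n}{i} q_\kS(n-i)\, d(i)$, where $d(i)$ counts length-$i$ walks on $\{s_1,s_2\}$ starting at some reference basepoint and staying in $R$; the same positional choice argument gives $q_\kU''(n)\le q_\kU(n)$, and Lemma \ref{twostep} applies the moment we know $d(i)$ has exponential growth $2$.

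The main obstacle is the uniformity of this last estimate. In the planar proof $d(i)$ was literally the count of Dyck prefixes, with a classical $\theta(2^i/\sqrt{i})$ asymptotic; in the present generality the meanders on $\{s_1,s_2\}$ in $R$ depend on the basepoint and on the arrangement of the hyperplanes $H_i$. The upper bound $d(i)\le 2^i$ is trivial; the matching lower bound requires exhibiting $c\cdot 2^i\cdot i^{-\rho}$ admissible meanders for some fixed $c,\rho>0$. I would prove this by projecting onto each normal direction $\nu_i$, where the $\{s_1,s_2\}$-walk becomes a weighted one-dimensional directed meander with non-negative drift $(s_1+s_2)\cdot\nu_i$; the results of Chapter \ref{dir} (Theorem \ref{dirasympt}) then give exponential growth equal to the full step inventory in each projection, and intersecting the constraints for all $i$ loses only a polynomial factor by a standard ballot-type argument. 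The delicate point is handling non-orthogonal or dependent normals $\nu_i$ and starting points arbitrarily close to several boundaries simultaneously; to sidestep this I would first shift the insertion position forward, at the cost of only a polynomial penalty, until the basepoint lies in a compact ``bulk'' region of $R$ bounded away from every $H_i$ by a fixed constant, so that the meander estimate applies uniformly. Once that uniform lower bound on $d(i)$ is in hand, the error analysis of Lemma \ref{twostep} transfers word-for-word, yielding $\beta_\kS + 2 \le \beta_\kU$.
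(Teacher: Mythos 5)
This statement is a \emph{conjecture} in the paper, not a theorem: the author supplies no proof, only the informal remark that ``there should be no trouble in boot strapping as we have done for planar walks with small steps.'' Your proposal therefore cannot be checked against a paper proof. On its own terms, your framing is the right one --- an insertion invariant expressed via inward normals, plus an auxiliary meander asymptotic so that Lemma~\ref{twostep} can be invoked --- and the one-step case (Lemma~\ref{onestep}) genuinely does go through once you have the inner-product invariance $s_0\cdot\nu_i\ge 0$.

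The two-step case, however, contains a real gap that you partially flag but then dismiss. You claim that after projecting the $\{s_1,s_2\}$-walk onto each normal direction $\nu_i$ and applying Theorem~\ref{dirasympt} in each projection, ``intersecting the constraints for all $i$ loses only a polynomial factor by a standard ballot-type argument.'' That is false in general. Take $s_2=-s_1$ (zero drift, as allowed) and suppose the region $R$ is a slab with opposing inward normals $\nu_1 = s_1$ and $\nu_2=-s_1$: every prefix $\sigma$ of the inserted walk must satisfy both $\sigma\cdot s_1\ge 0$ and $\sigma\cdot s_1\le 0$, so $\sigma\cdot s_1 = 0$ identically, and the only admissible $\{s_1,s_2\}$-meanders are the two alternating sequences. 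The count of admissible length-$i$ meanders is then $O(1)$, not $\Theta(2^i/\mathrm{poly}(i))$, and the lower bound $\beta_\kS+2\le\beta_\kU$ evaporates. In the original planar quarter-plane setting this does not happen because the normals $(1,0)$ and $(0,1)$ are orthogonal and the two one-dimensional constraints decouple, which is exactly what makes the ``ballot-type'' reasoning work. To make Conjecture~\ref{hdlemm} provable you would need to restrict the class of regions $R$ (for instance to rational cones, or to regions whose inward normals are pairwise non-antipodal and span a pointed cone) so that the projected constraints are compatible with exponentially many meanders; as written, the meander estimate you rely on is not uniformly available and the proposal does not close this hole.
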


Finally, we have a conjecture about producing upper bounds. In the remaining quarter plane models, experimental work has shown that sometimes we must find other half planes - which contain the quarter plane -  to produce upper bounds which are reasonable candidates for tight upper bounds. The conjecture is for step sets of any size in the quarter plane.

\begin{conj}
If a step set $\kS$ is symmetric about an axis $L$, with $L$ passing through the first quadrant (potentially on a boundary), then the half plane walks bounded by the line $L^\perp$ have the same exponential growth as the quarter plane walks.
\end{conj}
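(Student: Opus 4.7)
The plan handles the two inequalities $\beta_\cQ \leq \beta_\cH$ and $\beta_\cH \leq \beta_\cQ$ separately. The first is immediate: the hypothesis that $L$ passes through the first quadrant forces the half plane on the $L$-side of $L^\perp$ to contain the entire first quadrant (a short computation with the unit normal to $L$ of the form $(\cos\theta,\sin\theta)$ with $\theta\in[0,\pi/2]$ confirms this), so $\cQ_\kS \subseteq \cH_\kS$ and $q(n) \leq h(n)$ gives $\beta_\cQ \leq \beta_\cH$.

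The harder lower bound proceeds by a folding argument exploiting the symmetry hypothesis. Let $\sigma$ denote reflection across $L$; by hypothesis $\sigma(\kS) = \kS$, so $\sigma$ sends walks on $\kS$ to walks on $\kS$ and preserves the half plane. Decompose the half plane as $W^+ \sqcup L \sqcup W^-$, the two open wedges swapped by $\sigma$ together with the separating line. Given a half plane walk $w$, apply $\sigma$ to every maximal subwalk of $w$ contained in $W^-$; the resulting walk $\phi(w)$ lies in $W^+ \cup L$, and is a genuine walk on $\kS$ precisely because of the $\sigma$-invariance of the step set. The map $\phi$ is surjective onto wedge walks, and the preimage of a wedge walk $\tilde{w}$ with $k$ visits to $L$ has size at most $2^{k+1}$, since on each of the at most $k+1$ arcs between consecutive visits to $L$ one may independently select the side.

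Two reductions remain. First, writing $w^+(n)$ for the number of wedge walks of length $n$, one must show half plane walks and wedge walks have the same exponential growth via a generating function decomposition into excursions from $L$. If $E(t)$ is the excursion series in $W^+$ from $L$ back to $L$, then the wedge walk series takes the form $\frac{1+T(t)}{1-E(t)}$ and the half plane walk series the form $\frac{1+2T(t)}{1-2E(t)}$, where $T(t)$ is the corresponding tail series. Both radii of convergence coincide with the radius of convergence $\rho$ of $E$ provided $E(\rho)\leq 1/2$; the symmetry hypothesis ensures the drift component perpendicular to $L$ vanishes (summing over $\kS$ and applying $\sigma$-invariance), which by analogy with the one-dimensional recurrent random walk should give exactly the critical condition $E(\rho)=1/2$, so both series are singular precisely at $\rho$. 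Second, one must compare wedge walks to true quarter plane walks. When $L$ is one of the coordinate axes, $W^+ \cup L$ coincides with the first quadrant and the comparison is trivial. When $L$ is the diagonal $y=x$, however, the wedge $W^+$ is a right-angled region rotated $45^\circ$ from the first quadrant, and a separate argument---perhaps a further symmetry-induced bijection to the first quadrant using the $\sigma$-invariance a second time, or a case analysis over the small number of diagonally-symmetric step sets in $\bar{\kS}$---is required. This last case is the main obstacle, as it is precisely where the folding strategy fails to deliver the quarter plane directly, and it is likely the step most resistant to a unified treatment.
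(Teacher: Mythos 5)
This statement appears in the paper as a \emph{conjecture}, offered without proof at the very end of the thesis as a direction for future work; there is no argument in the paper to compare your attempt against, so your proposal is an independent proof attempt rather than a reconstruction of an existing one.

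As a proof attempt, your plan has genuine gaps that you yourself partly flag. The central unresolved issue is the diagonal case $L = \{y=x\}$. There your folding $\phi$ sends half-plane walks to walks in the wedge $\{y \geq x\}$, but that wedge is a $45^\circ$-rotated quadrant, not the first quadrant, and you offer only speculation (``perhaps a further symmetry-induced bijection \dots or a case analysis'') about how to bridge the two. This is not a technical wrinkle but the heart of the conjecture: among the $23$ D-finite models the $x/y$-symmetric ones ($\kS_{19}$, $\kS_{20}$, $\kS_{23}$, $\kS_{5}$ through $\kS_{10}$, etc.) are precisely those for which the conjectured half plane is the antidiagonal one, so declaring this case ``the main obstacle'' and stopping leaves the statement essentially unproved for a large fraction of its intended scope.

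Beyond that, the equality of exponential growth between half-plane walks and wedge walks rests on the assertion $E(\rho) = 1/2$, which you justify only ``by analogy with the one-dimensional recurrent random walk.'' You are implicitly invoking a null-recurrence criterion (vanishing drift of the projection onto the normal of $L$ forces the excursion series to take the critical value at its own singularity), and while this is plausible, it is a substantive fact about algebraic generating functions that needs proof, not analogy: one must rule out $E(\rho) < 1/2$, in which case the half-plane series would acquire a pole strictly inside $(0,\rho)$ and $\beta_{\cH} > \beta_{W^+}$. Banderier and Flajolet's Theorem 4 (Theorem~\ref{dirasympt} in this thesis) could supply the needed analysis of $y_0$ at $\rho$ in the zero-drift case, but you would have to connect your excursion decomposition explicitly to their kernel roots, and also argue that the decomposition $\frac{1+2T}{1-2E}$ is legitimate at the level of generating functions (i.e.\ that excursions on either side of $L$ are independent, which your own folding construction shows is not literally true once the walk interacts with the \emph{other} quarter-plane boundary). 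In short, the first-quadrant boundary not parallel to $L$ still constrains the wedge walks, and your excursion factorization ignores it.

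Finally, a smaller point: the preimage bound $|\phi^{-1}(\tilde w)| \leq 2^{k+1}$ with $k$ counting visits to $L$ gives $h(n) \leq 2^{n+1} w^+(n)$ in the worst case (a walk can touch $L$ at a constant fraction of its steps), which is too weak to conclude $\beta_\cH \leq \beta_{W^+}$ from the folding alone. You correctly pivot to generating functions for this step, but the folding paragraph then does no logical work in your argument and could be dropped.
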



\backmatter
\newpage

\addcontentsline{toc}{chapter}{\bibname}

\bibliographystyle{plain}
\bibliography{thesis}


\end{document}